\documentclass[11pt, bibliography=totocnumbered, one-side]{amsart}


\usepackage{xcolor}
\usepackage[T1]{fontenc}
\usepackage[utf8]{inputenc}
\usepackage{lmodern}
\usepackage{amsmath, amssymb, amsfonts, amsthm, mathtools}
\usepackage{extarrows}
\usepackage{mathrsfs}
\usepackage{tikz}
\usepackage{musicography}
\usepackage{tikz-cd}
\usepackage{graphicx}
\usepackage{faktor}
\usepackage{young}
\usepackage[centertableaux]{ytableau}
\usepackage{verbatim} 
\usepackage{enumitem}
\usepackage{microtype}
\usepackage[style=alphabetic, backend=biber, maxbibnames=99]{biblatex}
\addbibresource{literaturverzeichnis.bib}
\usepackage{csquotes}
\usepackage{todonotes}
\usepackage{color}      
\usepackage{xparse}
\usepackage{setspace}
\usepackage{longtable}
\usepackage[bindingoffset=5mm, width=15cm, height=22cm]{geometry}
\usepackage{kantlipsum}
\usepackage{url}
\def\UrlBreaks{\do\/\do-}
\usepackage{breakurl}
\usepackage[breaklinks]{hyperref}
\allowdisplaybreaks
\usetikzlibrary{cd,quotes,babel,angles}

 \DeclareUnicodeCharacter{0308}{HERE!HERE!}
 \DeclareUnicodeCharacter{03B3}{XXXX Here I am XXX}

\definecolor{light-gray}{gray}{0.85}

\binoppenalty=\maxdimen         
\relpenalty=\maxdimen           


\newtheorem{thm}{Theorem}[section]
\newtheorem{lemma}[thm]{Lemma}
\newtheorem{prop}[thm]{Proposition}

\newtheorem{cor}[thm]{Corollary}
\theoremstyle{definition}
\newtheorem{defi}[thm]{Definition}

\newtheorem{rem}[thm]{Remark}


\DeclareMathOperator{\Hom}{Hom}
\DeclareMathOperator{\Nat}{Nat}
\DeclareMathOperator{\Fun}{Fun}
\DeclareMathOperator{\End}{End}
\DeclareMathOperator{\colim}{colim}
\DeclareMathOperator{\rad}{rad}
\DeclareMathOperator{\K}{\mathbb{K}}
\DeclareMathOperator{\Z}{\mathbb{Z}}
\newcommand{\from}{\colon}
\newcommand{\Ln}{\mathcal{L}_{n}}
\newcommand{\Lnk}{\mathcal{L}_{n,k}}
\DeclareMathOperator{\J}{J}
\DeclareMathOperator{\Max}{Max}
\numberwithin{equation}{section} 
\DeclareMathOperator{\C}{\mathbb{C}}
\DeclareMathOperator{\R}{\mathbb{R}}
\DeclareMathOperator{\N}{\mathbb{N}}
\DeclareMathOperator{\la}{\lambda}
\DeclareMathOperator{\sgn}{sgn}
\DeclareMathOperator{\id}{id}
\DeclareMathOperator{\im}{im}
\DeclareMathOperator{\Lan}{Lan}
\DeclareMathOperator{\Yon}{Yon}
\DeclareMathOperator{\coker}{coker}
\DeclareMathOperator{\D}{D} 
\def\acts{\curvearrowright}
\DeclareMathOperator{\modd}{\mathbf{mod}}
\DeclareMathOperator{\sAb}{\mathbf{sAb}}
\DeclareMathOperator{\Ab}{\mathbf{Ab}}
\DeclareMathOperator{\Top}{\mathbf{Top}}
\DeclareMathOperator{\sSet}{\mathbf{sSet}}
\DeclareMathOperator{\Set}{\mathbf{Set}}
\DeclareMathOperator{\Mod}{\mathbf{Mod}}
\DeclareMathOperator{\proj}{\mathbf{proj}}
\DeclareMathOperator{\inj}{\mathbf{inj}}
\newcommand{\Rl}{R_{T_\lambda}}
\newcommand{\Cl}{C_{T_\lambda}}
\DeclareMathOperator{\ob}{ob}
\DeclareMathOperator{\tr}{tr}
\DeclareMathOperator{\TL}{TL}
\DeclareMathOperator{\Ug}{U}
\DeclareMathOperator{\sl2}{\mathfrak{sl}_2}
\DeclareMathOperator{\gl2}{\mathfrak{gl}_2}
\DeclareMathOperator{\U}{U(\mathfrak{sl}_2)}
\newcommand{\Uq}{\Ug_q(\sl2)}
\DeclareMathOperator{\M}{M}
\DeclareMathOperator{\Sym}{Sym}
\DeclareMathOperator{\GL}{GL}
\DeclareMathOperator{\op}{op}
\DeclareMathOperator{\B}{B}
\DeclareMathOperator{\Catk}{\text{\textbf{Cat}}_{\mathbb{K}}}

\DeclareFieldFormat[article,unpublished]{citetitle}{\mkbibitalic{#1}\isdot}
\DeclareFieldFormat[article,unpublished]{title}{\mkbibitalic{#1}\isdot}
\DeclareFieldFormat[article,book]{number}{\mkbibbold{#1}}
\DeclareFieldFormat[article]{volume}{\mkbibbold{#1}}


\usetikzlibrary{calc}
\newcommand{\cbox}[1]{ \vcenter{ \hbox{#1} } }
\usetikzlibrary{patterns,decorations.pathreplacing}
\tikzset{
	tldiagram/.style={thick, scale=0.35}
}
\newcommand{\tlcoord}[2]
{
	(2*#2 , 3*#1)
}
\newcommand{\makecdots}[2]
{
	\node at (2*#2, 3*#1 + 1.5) {$\cdots$};
}
\newcommand{\lineup}{-- ++(0,3)}
\newcommand{\linedown}{-- ++(0,-3)}
\newcommand{\smalllineup}{-- ++(0,2)}
\newcommand{\smalllinedown}{-- ++(0,-2)}
\newcommand{\linewave}[2]{
	.. controls +(0,1.5*#1) and +(0,1.5*-#1) .. ++(2*#2, 3*#1)
}
\newcommand{\capright}{arc (180:0:1)}
\newcommand{\capleft}{arc (0:180:1)}
\newcommand{\cupright}{arc (180:360:1)}
\newcommand{\cupleft}{arc (360:180:1)}
\newcommand{\xcapright}[1]{arc (180:0:#1)}
\newcommand{\xcapleft}[1]{arc (0:180:#1)}
\newcommand{\xcupright}[1]{arc (180:360:#1)}
\newcommand{\xcupleft}[1]{arc (360:180:#1)}
\newcommand{\drawdots}[3]
{
	\foreach \i in {#2, ..., #3}
	{
		\draw[fill] (2*\i, 3*#1) circle (0.3em);
	}
}
\newcommand{\maketlbox}[2]{
	++(-1,-0.75)
	+(-0.5, -0.3)
	[fill=blue!20!white, rounded corners]
	rectangle
	+(#1*2 + 0.5, 1.5 + 0.3)
	node at ++(#1,0.75) {#2}
}
\newcommand{\maketlellipse}[2]{
	++ (#1 - 1,0)
	[fill=green!20!white]
	ellipse (#1 + 0.5 and 1.2)
	node {#2}
}

\newcommand{\tlcross}
{
	\begin{tikzpicture}[scale=0.092]
		\draw (0,0) -- (2,3);
		\draw (2,0) -- (0,3);
	\end{tikzpicture}
}
\newcommand{\pretlcap}
{
	\begin{tikzpicture}[xscale=0.092,yscale=0.12]
		\draw (0,0) -- +(0,1) \capright -- +(0,-1);
	\end{tikzpicture}
}
\newcommand{\pretlcup}
{
	\begin{tikzpicture}[xscale=0.092,yscale=0.12]
		\draw (0,0) -- +(0,-1) \cupright -- +(0,1);
	\end{tikzpicture}
}
\newcommand{\pretlcupcap}
{
	\begin{tikzpicture}[scale=0.092]
		\draw \tlcoord{1}{0} \cupright;
		\draw \tlcoord{0}{0} \capright;
	\end{tikzpicture}
}
\newcommand{\tlcircle}
{
	\begin{tikzpicture}[scale=0.11]
		\draw (0,0) circle (1);
	\end{tikzpicture}
}
\newcommand{\pretlline}
{
	\vert
}\newcommand{\tltwolines}
{
	\vert\,\vert
}

\newcommand\CROSS{   
\begin{tikzpicture}[anchorbase]
\draw    (-0.25,0.25) to (0.25,-0.25) ;
\draw    (-0.25,-0.25) to (0.25,0.25) ;
\end{tikzpicture}
}
\newcommand\FOURLEGS{     
\begin{tikzpicture}[anchorbase]
\draw    (-0.25,-0.25) to (-0.25,0) ;
\draw    (-0.25,0) to (0.25,0) ;
\draw    (0.25,0) to (0.25,-0.25) ;
\draw    (0,0) to (0,0.25) ;
\draw    (-0.25,0.25) to (0.25,0.25) ;
\draw    (-0.25,0.25) to (-0.25,0.5) ;
\draw    (0.25,0.25) to (0.25,0.5) ;
\end{tikzpicture}
}

\newcommand\Fourlegs{     
\begin{tikzpicture}[anchorbase]
\draw    (-0.15,-0.15) to (-0.15,0) ;
\draw    (-0.15,0) to (0.15,0) ;
\draw    (0.15,0) to (0.15,-0.15) ;
\draw    (0,0) to (0,0.15) ;
\draw    (-0.15,0.15) to (0.15,0.15) ;
\draw    (-0.15,0.15) to (-0.15,0.3) ;
\draw    (0.15,0.15) to (0.15,0.3) ;
\end{tikzpicture}
}

\newcommand\Lowerfourlegs{     
\begin{tikzpicture}[anchorbase]
\draw    (-0.15,-0.15) to (-0.15,0) ;
\draw    (-0.15,0) to (0.15,0) ;
\draw    (0.15,0) to (0.15,-0.15) ;
\draw    (0,0) to (0,0.15) ;
\end{tikzpicture}
}

\newcommand\Upperfourlegs{     
\begin{tikzpicture}[anchorbase]
\draw    (0,0) to (0,0.15) ;
\draw    (-0.15,0.15) to (0.15,0.15) ;
\draw    (-0.15,0.15) to (-0.15,0.3) ;
\draw    (0.15,0.15) to (0.15,0.3) ;
\end{tikzpicture}
}

\newcommand\CAP{
    \begin{tikzpicture}[anchorbase]
      \draw (-0.25,-0.25) to (-0.25,0);
      \draw (-0.25,0) to (0.25,0);
      \draw (0.25,0) to (0.25,-0.25);
    \end{tikzpicture}
}

\newcommand\CUP{
    \begin{tikzpicture}[anchorbase]
      \draw (-0.25,-0.25) to (-0.25,0);
      \draw (-0.25,-0.25) to (0.25,-0.25);
      \draw (0.25,0) to (0.25,-0.25);
    \end{tikzpicture}
}

\DeclareMathOperator*{\tlcap}{\,\pretlcap\,}
\DeclareMathOperator*{\tlcup}{\,\pretlcup\,}
\DeclareMathOperator*{\tlcupcap}{\,\pretlcupcap\,}
\DeclareMathOperator*{\tlline}{\pretlline}

\setlist[enumerate]{label= \roman*)}

\setlist{
	listparindent=\parindent,
	parsep=0pt
}
\usepackage{tikz}
\usetikzlibrary{arrows,patterns}
\usepackage{tikz-cd}

\newcommand{\dotlabel}[1]{$\scriptstyle{#1}$}
\newcommand{\pd}[2][black]{\filldraw[#1] (#2) circle (1.5pt)} 
\newcommand{\pdg}[3]{
    \filldraw[black] (#1) circle (1.5pt) node[anchor=#2] {\dotlabel{#3}}
}
\newcommand{\braidto}{to[out=up,in=down]}
\newcommand\bluedot[1]{\filldraw[blue] #1 circle (2pt)}
\newcommand\opendot[1]{\filldraw[fill=white,draw=black] (#1) circle (2pt)}
\newcommand\token[3]{
  \filldraw[blue] (#2) circle (1.5pt) node[anchor=#1] {\dotlabel{#3}}
}
\newcommand\teleport[2]{
  \draw[blue] (#1) to (#2);
  \filldraw[blue] (#1) circle (1.5pt);
  \filldraw[blue] (#2) circle (1.5pt)
}

\newcommand\Peis[1][]{{\Heis_{\uparrow \downarrow}^{#1}}}

\newcommand\merge{
    \begin{tikzpicture}[anchorbase]
      \draw (-0.25,-0.25) to (0,0);
      \draw (0.25,-0.25) to (0,0);
      \draw (0,0) to (0,0.25);
    \end{tikzpicture}
}

\newcommand\spliter{
    \begin{tikzpicture}[anchorbase]
      \draw (-0.25,0.25) to (0,0);
      \draw (0.25,0.25) to (0,0);
      \draw (0,0) to (0,-0.25);
    \end{tikzpicture}
}

\newcommand\crossing{
    \begin{tikzpicture}[anchorbase]
      \draw (-0.25,-0.25) to (0.25,0.25);
      \draw (0.25,-0.25) to (-0.25,0.25);
    \end{tikzpicture}
}

\newcommand\bottompin{
    \begin{tikzpicture}[anchorbase]
      \draw (0,0) to (0,0.25);
      \opendot{0,0};
    \end{tikzpicture}
}

\newcommand\toppin{
    \begin{tikzpicture}[anchorbase]
      \draw (0,0) to (0,-0.25);
      \opendot{0,0};
    \end{tikzpicture}
}

\newcommand\idstrand{
    \begin{tikzpicture}[anchorbase]
      \draw (0,-0.25) to (0,0.25);
    \end{tikzpicture}
}

\newcommand\tokstrand[1][g]{
    \begin{tikzpicture}[anchorbase]
      \draw (0,-0.25) to (0,0.25);
      \token{east}{0,0}{#1};
    \end{tikzpicture}
}

\newcommand\lolly{
    \begin{tikzpicture}[anchorbase]
        \draw (0,-0.2) -- (0,0.2);
        \opendot{0,-0.2};
        \opendot{0,0.2};
    \end{tikzpicture}
}

\tikzset{anchorbase/.style={>=To,baseline={([yshift=-0.5ex]current bounding box.center)}}}
\tikzset{
    centerzero/.style={>=To,baseline={([yshift=-0.5ex](#1))}},
    centerzero/.default={0,0}
}



\begin{document}

\title[Interpolation categories]{On interpolation categories for the hyperoctahedral group}
\author{{\rm Th. Heidersdorf, G. Tyriard}}

\address{T. H.: Newcastle University}
\email{heidersdorf.thorsten@gmail.com} 
\address{G.T.: Mathematisches Institut Universit\"at Bonn}
\email{georgetyriard@gmail.com}

\date{}

\begin{abstract} Two different types of Deligne categories have been defined to interpolate the finite dimensional complex representations of the hyperoctahedral group. The first one, initially defined by Knop and then further studied by Likeng and Savage, uses a categorical analogue of the permutation representation as a tensor generator. The second one, due to Flake and Maassen, is tensor generated by a categorical analogue of the reflection representation. We construct a symmetric monoidal functor between the two and show that it is an equivalence of symmetric monoidal categories.
\end{abstract}

\thanks{2020 {\it Mathematics Subject Classification}: 17B10, 18D10, 20F55.}

\maketitle

\section{Introduction}

The prototypical example of an interpolation category is Deligne's category $\underline{\text{Rep}}(S_t)$, $t \in \C$, \cite{Deligne}, a symmetric monoidal category that depends on a complex parameter $t$. These categories are semisimple for $t \notin \mathbb{N}$ and their semisimplification at positive integers $t \in \N$ coincides with the usual finite dimensional complex representations of $S_n$.

This construction has been generalized by many authors (see for example Knop
\cite{Knop_2007} and Etingof \cite{EtingofRepTheoComplexRank1}) and there are now many
interpolation or Deligne categories. These Deligne categories have found
applications in the theory of tensor categories, representations of
supergroups \cite{Heidersdorf}\cite{EhrigStroppel} \cite{EAS} and invariant theory \cite{Coulembier}.

We usually speak of \emph{the} Deligne category $\underline{\text{Rep}}(S_t)$ attached to the symmetric group as if the construction is canonical. The Deligne category $\underline{\text{Rep}}(S_t)$ depends however on the choice of a family of tensor generators $V_n$, one for each $n$ (a categorical analogue of a faithful representation of $S_n$). An obvious choice for $V_n$ is given by the representation of $S_n$ on $\C^n$. However Deligne did not use the
reflection representation $\mathbb{C}^n$ of $S_n$ and its tensor powers to
build his category, but he showed that both give the same category. This phenomenon was systematically studied by Knop \cite{Knop-subobject} in the general context of his interpolation categories. He showed that one can often replace an obvious tensor generator by minimal subobjects to construct the interpolation category.

\subsection{The hyperoctahedral groups}

Among the cases studied by Knop were representations of wreath products $G
\wr S_n$. Knop constructed interpolating categories in this setting and
determined the singular parameters (those $t$ for which the categories are
not semisimple). These wreath product categories have been studied by many authors in recent years \cite{FS} \cite{Harman} \cite{Ryba1} \cite{Ryba2}. Knop's construction yields in particular an interpolation for the representations of the hyperoctahedral group $H_{n}= \Z_{2}\wr S_{n} $, the Weyl group of type B. Recently, Flake and Maassen
\cite{Flake_2021} generalised the ideas of Deligne's $\underline{\text{Rep}}(S_t)$ construction to interpolate the representation
categories of easy quantum groups (in the sense of Banica-Speicher \cite{BS09}). Among their examples is yet another interpolation category $\underline{\text{Rep}}(H_{t})$ for
$\text{Rep}(H_{n})$. Knop's construction was taken up recently by Likeng and Savage in \cite{Nyobe_Likeng_2021},
who gave a description of Knop's interpolation categories for wreath products $G\wr S_{n}$ using generators and relations.

The two different interpolation categories $\underline{\text{Rep}}(H_{t})$ and $\text{Par}(\Z_{2},t)^{Kar}$ for $H_{n}$ given
in \cite{Flake_2021} and \cite{Nyobe_Likeng_2021} interpolate the
representation categories for the hyperoctahedral groups in different objects and for different parameters
$t\in \C$. The morphisms between the tensor powers of the
generating objects in the categories $\underline{\text{Rep}}_{0}(H_{n})$ and
$\text{Par}(\Z_{2},2n)$, the pre-Karoubian envelope versions of the
interpolation categories, mimic the behaviour of the morphism spaces between
the tensor powers of the reflection representation $u$ and of the
permutation representation $V$ respectively. So the interpolation/semisimplification functors
$G:\underline{\text{Rep}}(H_{t}) \to \text{Rep}(H_{n})$ and
$H:\text{Par}(\Z_{2},2n)^{Kar}\to \text{Rep}(H_{n})$ are defined on objects by $[k]\to u^{\otimes k}$ and $[\tilde{k}]\to V^{\otimes k}$ respectively. We compare these different constructions and formulate the universal properties of these categories. For this we derive in Theorem \ref{real Universal property fo RepH0, not Karoubian universal prop} a presentation via generators and relations for the reflection category $\underline{\text{Rep}}(H_t)$. We also describe the semisimplification of $\underline{\text{Rep}}(H_{t})$ in terms of these generators.

The main result of this thesis
is the following theorem (see Theorem \ref{Properties of the functor Omega} and Corollary \ref{commutative square}).

\begin{thm} \nonumber

There is a symmetric monoidal
equivalence
\begin{align*}
     \Omega: \underline{\text{Rep}}(H_{n}) \simeq \text{Par}(\Z_{2},2n)^{Kar}
\end{align*}
such that the diagram 
\begin{equation*}
\begin{tikzcd}
\underline{\text{Rep}}(H_{n}) \arrow{r}{G} \arrow[d,"\Omega"] & \text{Rep}(H_{n})\arrow{d}{=} \ \\ \text{Par}(\mathbb{Z}_{2},2n)^{Kar} \arrow{r}{H} & \text{Rep}(H_{n})
\end{tikzcd}
\end{equation*}
commutes for all $n\in \N$. Here $G$ and $H$ denote the semisimplification functors.
\end{thm}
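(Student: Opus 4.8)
The plan is to exploit the presentations by generators and relations of both categories. By Theorem~\ref{real Universal property fo RepH0, not Karoubian universal prop} the pre-Karoubian reflection category $\underline{\text{Rep}}_0(H_n)$ is generated, as a symmetric monoidal category, by one object $[1]$ (modelling the reflection representation $u$) together with finitely many generating morphisms subject to an explicit list of relations; on the other side, the Likeng--Savage presentation describes $\text{Par}(\Z_2,2n)$ in terms of the generator $[\tilde 1]$ (modelling the permutation representation $V$) and its cup, cap, crossing and $\Z_2$-token morphisms. Everything hinges on the $H_n$-decomposition $V\cong\mathbf 1\oplus W\oplus u$, where $W$ is the inflation of the standard representation of $S_n$: here $W\oplus\mathbf 1$ is realised as the span of the diagonal tensors inside $u^{\otimes 2}$, and $u$ is the image of an $H_n$-equivariant idempotent on $V$.

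First I would construct $\Omega$. In $\text{Par}(\Z_2,2n)$ I would write down an explicit idempotent $e\in\End([\tilde 1])$ --- a linear combination, with coefficients involving $1/(2n)$ (legitimate since $n\ge 1$), of the identity, the $\Z_2$-token, the cup--cap and its token-decorated variants --- arranged so that the semisimplification functor $H$ sends $e$ to the $H_n$-equivariant projection $V\twoheadrightarrow u$. After verifying $e^2=e$ from the defining relations of $\text{Par}(\Z_2,2n)$, I would set $\Omega([1]):=([\tilde 1],e)$ in the Karoubi envelope and send each generating morphism of $\underline{\text{Rep}}_0(H_n)$ to the evident $\Z_2$-decorated partition diagram on tensor powers of $[\tilde 1]$, pre- and post-composed with the relevant tensor powers of $e$. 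The heart of the matter --- and the step I expect to be the main obstacle --- is checking that these assignments respect \emph{every} relation of Theorem~\ref{real Universal property fo RepH0, not Karoubian universal prop}; this is a finite but lengthy diagrammatic verification. Granting it, the universal property yields a symmetric monoidal functor, which extends to $\Omega\colon\underline{\text{Rep}}(H_n)\to\text{Par}(\Z_2,2n)^{Kar}$ on Karoubi envelopes.

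Next I would produce a quasi-inverse. Using the Likeng--Savage presentation, define $\Psi\colon\text{Par}(\Z_2,2n)^{Kar}\to\underline{\text{Rep}}(H_n)$ by sending $[\tilde 1]$ to the retract $Q$ of $[1]^{\otimes 2}\oplus[1]$ corresponding to the summand $V\cong\mathbf 1\oplus W\oplus u$, and sending the cup, cap, crossing and token of $\text{Par}(\Z_2,2n)$ to explicit morphisms of $\underline{\text{Rep}}(H_n)$; one then checks the (comparatively short) Likeng--Savage relations. Because both source categories are tensor generated by a single object, to see that $\Psi\Omega\cong\id$ and $\Omega\Psi\cong\id$ it suffices to evaluate the two composites on the respective generators and generating morphisms: $\Omega(Q)$ is the image of an idempotent on $\mathbf 1\oplus[\tilde 1]\oplus[\tilde 1]^{\otimes 2}$ that one identifies with the canonical projection onto $[\tilde 1]$, and similarly $\Psi(\Omega([1]))=\Psi([\tilde 1],e)$ is the image of a matching idempotent on $\mathbf 1\oplus[1]\oplus[1]^{\otimes 2}$; the remaining checks are idempotent bookkeeping. (Faithfulness of $\Omega$ can alternatively be obtained without $\Psi$ from a dimension count, since the Hom spaces between tensor powers of the generators on the two sides are free of the same rank, with matching bases indexed by $\Z_2$-decorated set partitions; fullness and essential surjectivity then follow from the retract description of $[\tilde 1]$.)

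For the commuting square (Corollary~\ref{commutative square}), note that $G$ and $H\circ\Omega$ are symmetric monoidal functors out of $\underline{\text{Rep}}(H_n)$, which is tensor generated by $[1]$; hence it is enough to compare them on $[1]$ and on the generating morphisms. By the defining property of $e$ one has $H(\Omega([1]))=\im H(e)=u=G([1])$, and the images of the generating morphisms agree because $H$ takes the $\Z_2$-decorated diagrams used to define $\Omega$ to precisely the $H_n$-equivariant maps underlying $G$.
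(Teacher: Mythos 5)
Your overall strategy is the one the paper uses: realize $u$ as the image of an idempotent $e$ on the generator $[\tilde 1]$ of $\text{Par}(\Z_{2},2n)$ with $H(e)$ the equivariant projection $V\to u$, define $\Omega$ on the presentation of $\underline{\text{Rep}}_{0}(H_{n})$ from Theorem \ref{real Universal property fo RepH0, not Karoubian universal prop} by conjugating diagrams with tensor powers of $e$, identify $[\tilde 1]$ with the image of $[1]\oplus([2],\Fourlegs)$ for essential surjectivity, and check the square on generators. However, two of your concrete steps would fail as written. First, the idempotent: the projection of $V$ onto $u\cong\bigoplus_i\C(e^i_1-e^i_{-1})$ is simply $e'=\tfrac12(\mathrm{id}-\zeta)$, where $\zeta$ is the $-1$ token; it involves no cup--cap terms and no coefficient $1/(2n)$. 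An idempotent built with $1/(2n)$ and the cup--cap composites is the projection onto (or away from) the \emph{trivial} summand of $V$, not onto $u$, so the combination you describe does not have the property you require of it unless those extra coefficients vanish. (Using $\tfrac12(\mathrm{id}-\zeta)$ also keeps the construction uniform in the interpolation parameter, which the paper exploits to get the equivalence for all $t$, not just $t=n$.)

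Second, and more seriously, the assignment "send each generating morphism to the evident decorated diagram pre- and post-composed with powers of $e$" is \emph{not} a functor: the relations of Theorem \ref{real Universal property fo RepH0, not Karoubian universal prop} fail without a normalization. For instance, $(e'\otimes e')\circ\spliter\circ\merge\circ(e'\otimes e')$ squares to one half of itself, so the neutralizer relation $\alpha\circ\alpha=\alpha$ forces $\Omega$ to send the four-legged generator to \emph{twice} that composite; in general a diagram with $s$ blocks of sizes $m_{1},\dots,m_{s}$ must be rescaled by $2^{((\sum_i m_i)-2s)/2}$. This scaling is the actual content of the "finite but lengthy verification" you defer, and discovering it is not automatic from the setup. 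Relatedly, your parenthetical dimension count for faithfulness is not correct as stated: $\C P_{\Z_{2}}(k,l)$ has strictly larger dimension than $\C P_{\mathrm{even}}(k,l)$, and the equality of ranks only holds \emph{after} truncating by $(e')^{\otimes l}(-)(e')^{\otimes k}$; proving that truncation kills exactly the spans of diagrams with an odd block and is injective on the rest is precisely the paper's full-faithfulness argument, so it cannot be invoked as a shortcut. Your explicit quasi-inverse $\Psi$ (sending $[\tilde 1]$ to $([2],\Fourlegs)\oplus[1]$) is a legitimate alternative to the paper's direct proof of full faithfulness plus essential surjectivity, at the cost of verifying the Likeng--Savage relations for the images, but it does not repair the two issues above.
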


Remarkably, the much more complicated morphism spaces in $\text{Par}(\Z_{2},2n)^{Kar}$ which are given by \emph{$\mathbb{Z}_2$-coloured partition diagrams}, can be therefore completely modelled by the much easier \emph{even, uncolored partition diagrams} which describe the morphism spaces in $\underline{\text{Rep}}(H_{t})$. As an application we obtain that the isomorphism classes of indecomposable objects of $\text{Par}(\Z_{2},t)^{Kar}$ are parametrized by the set of all bipartitions for $t\in \C \backslash \{0\}$. This was proven by Knop \cite{Knop_2007} in the semisimple $t \neq 2n$-case (see also \cite{Nyobe_Likeng_2021}).

Both categorical actions of the two interpolation categories on tensor products of the permutation representation and the reflection representation via the semisimplification functors can be seen as categorical analogues of a double centralizer property. The explicit description of these two functors therefore generalizes in the special case of $H_n$ the work of Orellana \cite{Orellana} (who studied the endomorphism algebras of tensor powers of the reflection representation) and of Bloss \cite{Bloss} (who studied the endomorphism algebras of tensor powers of the permutation representation). Our theorem can be seen as an upgraded categorical compatibility between these two types of Schur-Weyl duality.

It would be interesting to explore whether such equivalences occur in more
general situations like general wreath products or more specifically for complex reflection groups $G(r,p,d)$. Another interesting direction is the connections to other settings of stable representation theory for the hyperoctahedral group. Wilson \cite{FIWmodules} studied this from the perspective of stable sequences of Weyl group representations. Another setting are tensor representations of the infinite hyperoctahedral group, the inductive limit of the ascending tower of groups $H_1 \subset H_2 \subset...$, similar to Sam-Snowden's category $\text{Rep}(S_{\infty})$ \cite{StevenSnowden}. In the symmetric group case it is known that these different categories are closely related \cite{BarterHeiderdorf}, and one might expect that an analogous theorem holds for the hyperoctahedral case.

\subsection*{Acknowledgements} T.H. would like to thank Johannes Flake for helpful conversations. The research of T.H. was partially funded by the Deutsche Forschungsgemeinschaft (DFG, German Research
Foundation) under Germany's Excellence Strategy – EXC-2047/1 – 390685813.



\section{Background}

\subsection{Conventions about monoidal categories}

We follow \cite{TensorCategories} in our definition of a $\mathbb{C}$-linear symmetric monoidal category. In such a category we have the following morphisms. 

\begin{itemize}  
     \item a bifunctor $\otimes: \mathcal{C}\times \mathcal{C} \to \mathcal{C}$
     \item a unit object $\textbf{1}\in C$
     \item an associator, which is a natural isomorphism $\alpha:((-\otimes -)\otimes-) \to (- \otimes (- \otimes -))$
     \item a left unitor, which is a natural isomorphism $\lambda: (\textbf{1} \otimes -) \to (-)$
     \item a right unitor, which is a natural isomorphism $\rho: (- \otimes \textbf{1}) \to (-)$
     \item natural isomorphism $S_{A,B}:A\otimes B \to B \otimes A$ for all $A,B \in \mathcal{C}$
\end{itemize}

We will also assume that $End_{\mathcal{C}}(\textbf{1})\cong \C$. In case $\mathcal{C}$ is rigid, we have the following additional morphisms:

\begin{itemize}
  \item a contravariant monoidal endofunctor $*:\mathcal{C} \to \mathcal{C}$. The object $A^*$ is called the dual object of $A$. 
   \item an isomorphism $\tau_{A}: {A}\cong((A)^*)^*$ which is natural in all $A\in \mathcal{C}$, so a natural isomorphism $\tau:id_{\mathcal{C}}\to((-)^*)^*$. If this axiom holds, we call the functor $*$ involutive.
   \item an isomorphism $\nu:\textbf{1} \to \textbf{1}^*$.
   \item isomorphisms $\gamma_{A,B}: (A\otimes B)^*\to B^*\otimes A^*$ which are natural in all $A,B \in \mathcal{C}$. 
   \item morphisms called the evaluation and coevaluation
\[ ev_A: A \otimes A^* \to \textbf{1}, \ \ coev_A: \textbf{1} \to A^* \otimes A \]   

   for all $A\in \mathcal{C}$, satisfying the usual axioms.
\end{itemize}

We will also assume that for all $f:A\to B$ the following diagrams commute:
\begin{center}
    
\scalebox{0.8}{
\tikzset{every picture/.style={line width=0.75pt}} 

\begin{tikzpicture}[x=0.75pt,y=0.75pt,yscale=-1,xscale=1]

\draw    (81.33,223.58) -- (141.4,208.08) ;
\draw [shift={(143.33,207.58)}, rotate = 165.53] [color={rgb, 255:red, 0; green, 0; blue, 0 }  ][line width=0.75]    (10.93,-3.29) .. controls (6.95,-1.4) and (3.31,-0.3) .. (0,0) .. controls (3.31,0.3) and (6.95,1.4) .. (10.93,3.29)   ;
\draw    (382.4,230.64) -- (439.45,210.26) ;
\draw [shift={(441.33,209.58)}, rotate = 160.34] [color={rgb, 255:red, 0; green, 0; blue, 0 }  ][line width=0.75]    (10.93,-3.29) .. controls (6.95,-1.4) and (3.31,-0.3) .. (0,0) .. controls (3.31,0.3) and (6.95,1.4) .. (10.93,3.29)   ;
\draw    (212,275) -- (272.38,261.03) ;
\draw [shift={(274.33,260.58)}, rotate = 166.98] [color={rgb, 255:red, 0; green, 0; blue, 0 }  ][line width=0.75]    (10.93,-3.29) .. controls (6.95,-1.4) and (3.31,-0.3) .. (0,0) .. controls (3.31,0.3) and (6.95,1.4) .. (10.93,3.29)   ;
\draw    (80,260) -- (140.4,276.07) ;
\draw [shift={(142.33,276.58)}, rotate = 194.9] [color={rgb, 255:red, 0; green, 0; blue, 0 }  ][line width=0.75]    (10.93,-3.29) .. controls (6.95,-1.4) and (3.31,-0.3) .. (0,0) .. controls (3.31,0.3) and (6.95,1.4) .. (10.93,3.29)   ;
\draw    (212,211) -- (272.4,227.07) ;
\draw [shift={(274.33,227.58)}, rotate = 194.9] [color={rgb, 255:red, 0; green, 0; blue, 0 }  ][line width=0.75]    (10.93,-3.29) .. controls (6.95,-1.4) and (3.31,-0.3) .. (0,0) .. controls (3.31,0.3) and (6.95,1.4) .. (10.93,3.29)   ;
\draw    (517.4,268.64) -- (574.45,248.26) ;
\draw [shift={(576.33,247.58)}, rotate = 160.34] [color={rgb, 255:red, 0; green, 0; blue, 0 }  ][line width=0.75]    (10.93,-3.29) .. controls (6.95,-1.4) and (3.31,-0.3) .. (0,0) .. controls (3.31,0.3) and (6.95,1.4) .. (10.93,3.29)   ;
\draw    (385.4,256.64) -- (442.4,272.06) ;
\draw [shift={(444.33,272.58)}, rotate = 195.14] [color={rgb, 255:red, 0; green, 0; blue, 0 }  ][line width=0.75]    (10.93,-3.29) .. controls (6.95,-1.4) and (3.31,-0.3) .. (0,0) .. controls (3.31,0.3) and (6.95,1.4) .. (10.93,3.29)   ;
\draw    (519.4,211.64) -- (576.4,227.06) ;
\draw [shift={(578.33,227.58)}, rotate = 195.14] [color={rgb, 255:red, 0; green, 0; blue, 0 }  ][line width=0.75]    (10.93,-3.29) .. controls (6.95,-1.4) and (3.31,-0.3) .. (0,0) .. controls (3.31,0.3) and (6.95,1.4) .. (10.93,3.29)   ;

\draw (290,234.4) node [anchor=north west][inner sep=0.75pt]    {$\textbf{1}$};
\draw (53,233.4) node [anchor=north west][inner sep=0.75pt]    {$A\otimes B^{*}$};
\draw (227,186.4) node [anchor=north west][inner sep=0.75pt]    {$ev_{B}$};
\draw (234,283.4) node [anchor=north west][inner sep=0.75pt]    {$ev_{A}$};
\draw (364.79,234.82) node [anchor=north west][inner sep=0.75pt]    {$\textbf{1}$};
\draw (454.2,264.95) node [anchor=north west][inner sep=0.75pt]    {$A^{*} \otimes A$};
\draw (153,196.4) node [anchor=north west][inner sep=0.75pt]    {$B\otimes B^{*}$};
\draw (151,268.4) node [anchor=north west][inner sep=0.75pt]    {$A\otimes A^{*}$};
\draw (76,185.4) node [anchor=north west][inner sep=0.75pt]    {$f\otimes id_{B^{*}}$};
\draw (74,282.4) node [anchor=north west][inner sep=0.75pt]    {$id_{A} \otimes f^{*}$};
\draw (452.2,196.95) node [anchor=north west][inner sep=0.75pt]    {$B^{*} \otimes B$};
\draw (580.33,230.98) node [anchor=north west][inner sep=0.75pt]    {$A^{*} \otimes B$};
\draw (532,189.4) node [anchor=north west][inner sep=0.75pt]    {$f^{*} \otimes id_{B}$};
\draw (537,267.4) node [anchor=north west][inner sep=0.75pt]    {$id_{A^{*}} \otimes f$};
\draw (383,190.4) node [anchor=north west][inner sep=0.75pt]    {$coev_{B}$};
\draw (381,271.4) node [anchor=north west][inner sep=0.75pt]    {$coev_{A}$};
\end{tikzpicture}
}
\end{center}

We let a tensor functor $\mathcal{F}: \mathcal{C}\to \mathcal{D}$ between tensor categories be a $\C$-linear strong monoidal functor in the sense of \cite[Section 2.4, Definition 4.2.5]{TensorCategories}. In particular, this means that there exists a natural isomorphism $\zeta: F\circ ( -\otimes_{\mathcal{C}} - ) \to \mathcal{F}(-)\otimes_{\mathcal{D}} \mathcal{F}(-) $ and an isomorphism $\zeta: \mathcal{F}(\textbf{1}_{\mathcal{C}}) \cong \textbf{1}_{\mathcal{D}}$.

We denote by $Trace_{l},Trace_{r}:\End_{\mathcal{C}}(A) \to \End_{\mathcal{C}}(\textbf{1})= \C$ the left and right trace. A $\C$-linear spherical rigid symmetric monoidal category $\mathcal{C}$ is a $\C$-linear rigid symmetric monoidal category for which the left and the right traces coincide. In this case we denote the trace of morphism $f:A\to A$ in $\mathcal{C}$ by $tr(f)$ and define the categorical dimension of $A$ by $\text{dim}(A):= tr(id_{A})$. 

\subsection{Additive idempotent completion}

\begin{defi}
    An embedding $\mathcal{F}:\mathcal{C} \to \mathcal{D}$ between categories $\mathcal{C}$ and $\mathcal{D}$ is a functor which is injective on objects and faithful. A full embedding is an embedding which is full.
\end{defi}

Let $\mathcal{C}$ be a $\C$-linear category. We denote its additive envelope \cite{Comes_2011} by $\mathcal{C}^{add}$. The additive envelope $\mathcal{C}^{add}$ comes together with an obvious $\C$-linear full embedding $\iota_{add}:\mathcal{C} \to \mathcal{C}^{add}$.

\begin{defi}
Let $\mathcal{C}$ be a $\C$-linear category. The $\C$-linear category $\mathcal{C}^{\musNatural}$, called the idempotent completion of $\mathcal{C}$, has as objects pairs $(A,e)$, where $A \in \ob(\mathcal{C})$ and $e=e^{2}\in \Hom_{\mathcal{C}}(A,A)$ is an idempotent. When the context is clear, we will sometimes write $A$ for $(A,id_{A})$. The morphism sets are obtained by pre- and postcomposing the morphisms of the corresponding morphism sets in $\mathcal{C}$ with the idempotents of the pairs, thus
\begin{align*}
    \Hom_{\mathcal{C}^{\musNatural}}((A,e),(B,f)):= f\Hom_{\mathcal{C}}(A,B) e.
\end{align*}
The composition in $\mathcal{C}^{\musNatural}$ coincides with the composition of morphisms in $\mathcal{C}$. The identity morphism of $(A,e) \in \mathcal{C}^{\musNatural}$ is $id_{(A,e)}:= e$. The idempotent completion $\mathcal{C}^{\musNatural}$ comes together with a $\C$-linear functor $\iota_{\musNatural}: \mathcal{C} \to \mathcal{C}^{\musNatural}$ given by 
\begin{align*}
    A &\mapsto (A,id_{A}),  f \mapsto f.
\end{align*}
for all $A\in \mathcal{C}$ and $f\in \text{Hom}_{\mathcal{C}}(B,C).$ The functor $\iota_{\musNatural}$ is a $\C$-linear full embedding.
\end{defi}

\begin{defi}
    A $\C$-linear category $\mathcal{C}$ is idempotent complete if $\iota_{\musNatural}: \mathcal{C} \simeq \mathcal{C}^{\musNatural}$, is an equivalence i.e. all idempotents split. 
\end{defi}

\begin{defi} \label{THE definition of Karoubian envelope and embedding}
    We define the Karoubian envelope of a $\C$-linear category $\mathcal{C}$ by $\mathcal{C}^{Kar}:= (\mathcal{C}^{add})^{\musNatural}$ and $\iota_{Kar}:=\iota_{\musNatural} \circ \iota_{add}$, which is a $\C$-linear full embedding.   A $\C$-linear category $\mathcal{C}$ is Karoubi if $\iota_{Kar}:\mathcal{C} \simeq \mathcal{C}^{Kar}=(\mathcal{C}^{add})^{\musNatural}$ is an equivalence, i.e. all idempotents split and all finite biproducts exist. 
\end{defi}

The following is well-known (see \cite{Tyriard} for details).
 
\begin{prop} \label{Karoubian envelope spherical}
    The Karoubian envelope $\mathcal{C}^{Kar}$ of a $\C$-linear spherical rigid symmetric monoidal category $\mathcal{C}$ can again be given the structure of a $\C$-linear spherical rigid symmetric monoidal category.
\end{prop}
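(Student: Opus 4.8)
The plan is to equip $\mathcal{C}^{Kar} = (\mathcal{C}^{add})^{\musNatural}$ with monoidal data transported from $\mathcal{C}$ through the two envelope constructions, and then to check that the spherical rigid structure survives. First I would recall that the additive envelope $\mathcal{C}^{add}$ inherits a $\C$-linear symmetric monoidal structure in the evident way: on objects, set $(A_1 \oplus \dots \oplus A_m) \otimes (B_1 \oplus \dots \oplus B_n) := \bigoplus_{i,j} A_i \otimes B_j$, extend $\otimes$ to morphisms by matrices, and define the associator, unitors and symmetry $S$ blockwise from those of $\mathcal{C}$; the coherence axioms hold blockwise. The full embedding $\iota_{add}$ is then strong monoidal. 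Likewise, for the idempotent completion $\mathcal{D}^{\musNatural}$ of any $\C$-linear symmetric monoidal $\mathcal{D}$, put $(A,e) \otimes (B,f) := (A \otimes B, e \otimes f)$ (note $e\otimes f$ is again idempotent since $\otimes$ is a bifunctor), with unit $(\mathbf{1}, id_{\mathbf{1}})$, and define $\alpha, \lambda, \rho, S$ on these objects by pre- and postcomposing the structure isomorphisms of $\mathcal{D}$ with the relevant idempotents; these are isomorphisms in $\mathcal{D}^{\musNatural}$ because the idempotents are precisely the identities there, and all coherence diagrams commute since they already commute in $\mathcal{D}$. Composing, $\mathcal{C}^{Kar}$ becomes a $\C$-linear symmetric monoidal category and $\iota_{Kar}$ is strong monoidal. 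One also checks $\End_{\mathcal{C}^{Kar}}(\mathbf{1}) \cong \C$ is preserved, since a new endomorphism of the unit would force a nontrivial idempotent decomposition contradicting $\End_{\mathcal{C}}(\mathbf{1}) \cong \C$ being a field.

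Next I would address rigidity. On $\mathcal{C}^{add}$ the dual of $\bigoplus_i A_i$ is $\bigoplus_i A_i^{*}$, with evaluation and coevaluation assembled from those of $\mathcal{C}$ together with the biproduct inclusions and projections, and with $\gamma$, $\tau$, $\nu$ defined blockwise; the zig-zag (triangle) identities then reduce to those in $\mathcal{C}$. On $\mathcal{D}^{\musNatural}$ the dual of $(A,e)$ is $(A^{*}, e^{*})$ — here $e^{*}$ is an idempotent because $*$ is a (contravariant) functor — and the evaluation $ev_{(A,e)}: (A,e)\otimes(A^{*},e^{*}) \to \mathbf{1}$ is $ev_A \circ (e \otimes e^{*})$, similarly for $coev$; the commuting squares displayed in the conventions subsection for $f = e$ are exactly what is needed to verify the snake identities after inserting the idempotents. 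The involutivity isomorphism $\tau$ and the natural isomorphisms $\gamma, \nu$ transport in the same fashion. Hence $\mathcal{C}^{Kar}$ is $\C$-linear rigid symmetric monoidal.

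Finally, for sphericality, recall that in a rigid symmetric monoidal category the left and right traces of $f: X \to X$ are computed from $ev$, $coev$, the symmetry, and the duality isomorphisms; since every piece of this data in $\mathcal{C}^{add}$ and in $\mathcal{D}^{\musNatural}$ was built by assembling or conjugating the corresponding data of $\mathcal{C}$, one gets that for $f: (A,e)\to (A,e)$ in $\mathcal{C}^{Kar}$, both $Trace_l(f)$ and $Trace_r(f)$ equal the corresponding trace in $\mathcal{C}$ of the morphism $f$ viewed in $\mathcal{C}$ (using additivity of the trace over biproducts in the $\mathcal{C}^{add}$ step). Since $\mathcal{C}$ is spherical these agree, so $\mathcal{C}^{Kar}$ is spherical. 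The main obstacle I anticipate is purely bookkeeping: tracking the idempotents $e \otimes f$, $e^{*}$ and the biproduct projection/inclusion morphisms carefully enough through the zig-zag identities and the trace computation so that nothing is dropped — there is no conceptual difficulty, but the diagrams must be chased with care (which is presumably why the statement is attributed to \cite{Tyriard} for full details).
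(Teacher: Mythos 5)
Your proposal is correct and is essentially the standard construction that the paper itself does not spell out but defers to the cited reference as ``well-known'': tensor and duality data are transported blockwise through the additive envelope and by conjugation with idempotents through the idempotent completion, with $(A,e)^{*}=(A^{*},e^{*})$ and sphericality following because traces in $\mathcal{C}^{Kar}$ reduce to traces in $\mathcal{C}$. You also correctly identify the one genuinely delicate point, namely that the compatibility squares $ev_{A}\circ(e\otimes id_{A^{*}})=ev_{A}\circ(id_{A}\otimes e^{*})$ (the displayed diagrams in the conventions subsection with $f=e$) are exactly what makes the snake identities survive the insertion of the idempotents.
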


\begin{prop} \label{Karoubian envelope full, faithful}
     Let $\mathcal{C}$ be a $\C$-linear category.  Let $\mathcal{D}$ be a Karoubi category and $\gamma: \mathcal{C} \to \mathcal{D}$ a $\C$-linear functor. Let $\gamma':\mathcal{C}^{Kar} \to \mathcal{D}$ be the $\C$-linear additive functor given by the universal property of the Karoubian envelope such that $\gamma= \gamma' \circ \iota_{Kar}$. Assume that $\gamma$ is faithful. Then the induced functor $\gamma'$ is also faithful. Assume that $\gamma$ is fully faithful. Then the induced functor $\gamma'$ is also fully faithful.
\end{prop}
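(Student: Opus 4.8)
The plan is to prove Proposition~\ref{Karoubian envelope full, faithful} in two stages, first handling the additive envelope $\mathcal{C}^{add}$ and then the idempotent completion, since $\mathcal{C}^{Kar} = (\mathcal{C}^{add})^{\musNatural}$ and $\iota_{Kar} = \iota_{\musNatural} \circ \iota_{add}$. Accordingly, factor $\gamma'$ as $\gamma' = \gamma'' \circ \iota_{\musNatural}'$ where $\gamma''$ comes from the universal property of the additive envelope applied to the composite $\mathcal{C} \to \mathcal{D}$, and then $\gamma'$ is obtained from $\gamma''$ by the universal property of idempotent completion (using that $\mathcal{D}$, being Karoubi, is in particular idempotent complete). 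It then suffices to show that each of the two intermediate functors preserves faithfulness and fully-faithfulness, since compositions of (fully) faithful functors are (fully) faithful.

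First I would treat the additive envelope. Recall that objects of $\mathcal{C}^{add}$ are formal finite direct sums $\bigoplus_i A_i$ of objects of $\mathcal{C}$, and morphisms $\bigoplus_i A_i \to \bigoplus_j B_j$ are matrices $(f_{ji})$ with $f_{ji} \in \Hom_{\mathcal{C}}(A_i, B_j)$, with matrix composition. The extended functor $\gamma''$ sends $\bigoplus_i A_i$ to $\bigoplus_i \gamma(A_i)$ (a genuine biproduct in $\mathcal{D}$) and acts entrywise on matrices, via the canonical inclusion/projection morphisms of the biproduct in $\mathcal{D}$. Faithfulness is then immediate: a morphism out of a biproduct is zero iff all its components are zero, and applying $\gamma$ componentwise kills a matrix $(f_{ji})$ only if each $\gamma(f_{ji}) = 0$, hence each $f_{ji} = 0$ by faithfulness of $\gamma$. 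For fully-faithfulness, given a morphism $\bigoplus_i \gamma(A_i) \to \bigoplus_j \gamma(B_j)$ in $\mathcal{D}$, compose with the biproduct inclusions and projections to extract its components in $\Hom_{\mathcal{D}}(\gamma(A_i), \gamma(B_j))$, lift each one along $\gamma$ using fullness, and assemble the lifts into a matrix; one checks this matrix maps to the original morphism, using the defining relations of the biproduct in $\mathcal{D}$.

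Next I would treat the idempotent completion step, i.e.\ show that if $\delta: \mathcal{A} \to \mathcal{D}$ is (fully) faithful and $\mathcal{D}$ is idempotent complete, then the induced $\delta': \mathcal{A}^{\musNatural} \to \mathcal{D}$ is (fully) faithful. On objects $\delta'$ sends $(A,e)$ to the image of the idempotent $\delta(e)$ in $\mathcal{D}$, i.e.\ to the object through which $\delta(e)$ factors as $\delta(e) = \iota \pi$ with $\pi \iota = \mathrm{id}$. A morphism $(A,e) \to (B,f)$ in $\mathcal{A}^{\musNatural}$ is an element $g \in f\Hom_{\mathcal{A}}(A,B)e$, and $\delta'(g)$ is obtained by conjugating $\delta(g)$ with the relevant $\iota$'s and $\pi$'s. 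If $\delta'(g) = 0$ then $\iota_B \delta'(g) \pi_A = 0$; but $\iota_B \delta'(g) \pi_A = \delta(f)\delta(g)\delta(e) = \delta(fge) = \delta(g)$ (using $g = fge$), so $\delta(g) = 0$ and hence $g = 0$ by faithfulness. For fullness, a morphism $\delta'(A,e) \to \delta'(B,f)$ in $\mathcal{D}$ can be transported to a morphism $\delta(A) \to \delta(B)$ between the ambient objects by pre- and post-composing with the split injections/projections; lift it along $\delta$ by fullness to some $h \in \Hom_{\mathcal{A}}(A,B)$, and then $fhe$ is the required preimage, as a short diagram chase with the idempotents confirms.

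The bookkeeping with biproduct inclusions/projections and idempotent splittings is entirely routine; the only point requiring a little care — and the step I would flag as the main technical obstacle — is making sure the constructions of $\gamma''$ and $\delta'$ here genuinely agree with the functor $\gamma'$ produced by the universal property in Definition~\ref{THE definition of Karoubian envelope and embedding}, so that the factorization $\gamma' = \delta' \circ \gamma''$ is legitimate. This is a uniqueness-up-to-unique-natural-isomorphism argument: both $\gamma'$ and $\delta' \circ \gamma''$ are $\C$-linear additive functors restricting to $\gamma$ along $\iota_{Kar}$, so they are canonically isomorphic, and (fully) faithfulness is invariant under natural isomorphism. Once that identification is in place, the two-stage argument above completes the proof.
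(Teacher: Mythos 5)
Your proposal is correct, and the heart of it --- the idempotent-completion step --- is the same argument as the paper's: you recover $\delta(g)$ from $\delta'(g)$ by conjugating with the splitting data and using $g = f g e$, which gives faithfulness, and for fullness you lift the transported morphism and then sandwich it between the idempotents, checking $g' = f g' e$ by faithfulness. Where you differ is in bookkeeping, and to your credit: you explicitly factor $\gamma'$ through the additive envelope and run the componentwise matrix argument there, whereas the paper's proof silently elides this stage --- it writes morphisms $(A,e)\to(B,f)$ of $\mathcal{C}^{Kar}$ as if $A,B$ were objects of $\mathcal{C}$ rather than of $\mathcal{C}^{add}$, so strictly speaking it only proves the statement for $\mathcal{C}^{\musNatural}$ and leaves the (routine) biproduct step implicit. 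You also handle the identification of your hands-on construction with the universal-property functor via uniqueness up to natural isomorphism, where the paper instead fixes the concrete model $\gamma'((A,e)) = (\gamma(A),\gamma(e))$ inside $\mathcal{D}^{Kar}$; both resolutions are fine, and invariance of (full) faithfulness under natural isomorphism makes yours legitimate. In short: same key idea, slightly more complete execution.
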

\begin{proof}
    We first assume that $\gamma$ is faithful and show the faithfulness of $\gamma'$. Because $\mathcal{D}$ is Karoubi, we can replace it by $\mathcal{D}^{Kar}$. We do this to be able to get a more manageable description of $\gamma'$. The functor $\gamma'$ is given up to isomorphism, but we work with the concrete choice which sends an object $(A,e)\in \mathcal{C}^{Kar}$ to $\gamma'((A,e)):=(\gamma(A),\gamma(e)) \in \mathcal{D}^{Kar}$. Let $g,h: (A,e)\to (B,f)$ be morphisms in $\mathcal{C}^{Kar}$ with $\gamma'(g)=\gamma'(h): (\gamma(A),\gamma(e))\to (\gamma(B),\gamma(f))$. By the fact that the morphisms $g,h:A\to B$ also lie in $\mathcal{C}$, $\gamma'$ sends the commutative square

\begin{equation*}
\begin{tikzcd}
(A,e) \arrow{r}{g,h}  & (B,f)  \\  (A,id_{A}) \arrow[u,"e"] \arrow{r}{g,h}  & (B,id_{B}) \arrow{u}{f}.
\end{tikzcd}
\end{equation*}

   in $\mathcal{C}^{Kar}$ to the commutative square

\begin{equation*}
\begin{tikzcd}
(\gamma(A),\gamma(e)) \arrow{r}  \arrow{r}{\gamma'(g),\gamma'(h)}& (\gamma(B),\gamma(f))  \ \\  (\gamma(A),id_{\gamma(A)}) \arrow{r}{\gamma(g),\gamma(h)} \arrow[u,"\gamma(e)"] & (\gamma(B),id_{\gamma(B)}) \arrow{u}{\gamma(f)}
\end{tikzcd}
\end{equation*}
 in $\mathcal{D}$. This shows that 
 \begin{align*}
     \gamma(g)&=\gamma(f\circ g)=\gamma(f)\circ \gamma(g) = \gamma'(g)\circ \gamma(e)\\&= \gamma'(h)\circ \gamma(e)= \gamma(f)\circ \gamma(h)=\gamma(f\circ g)=\gamma(h).
 \end{align*} Because $\gamma$ is faithful by assumption, we get $g=h$. This shows that $\gamma'$ is faithful. 

Now assume that $\gamma$ is fully faithful. Let $g:(\gamma(A),\gamma(e)) \to (\gamma(B),\gamma(f))$ be a morphism. Then $g$ is also a morphism $(\gamma(A),id_{\gamma(A)})\to (\gamma(B),id_{\gamma(A)})$. Because $\gamma$ is full there exists a $g':A\to B$ such that $\gamma(g')=g$. Because 
\begin{align*}
    \gamma(f \circ g' \circ e)=\gamma(f) \circ \gamma(g') \circ \gamma(e)=\gamma(g')
\end{align*}and $\gamma$ is faithful, we see that $g'=f \circ g' \circ e$. So $g'$ is a morphism $(A,e)\to (B,f)$ with $\gamma'(g')=g$. This shows that $\gamma'$ is full.
\end{proof}

\subsection{The hyperoctahedral group}

\begin{defi}
    The n-th hyperoctahedral group $H_{n}$ is the group of bijective functions $\pi$ on $\{-n,\ldots, -1,1,\ldots, n\}$ where $\pi(i)=-\pi(-i)$ for all $i\in \{-n,\ldots, -1,1,\ldots, n\}$.
\end{defi}

\begin{rem}
\noindent If we consider $\Z_{2}$ as the multiplicative group $\{-1,1\}\subset \C$, then we can describe the group $\mathbb{Z}_{2}^{n}$ by the following generators and relations: \begin{align*}
     \mathbb{Z}_{2}^{n}=\langle y_{1},y_{2},\ldots,y_{n}|&y_{i}^{2}=1 \text{ for all } i\in\{1,\ldots,n\},\\&(y_{i}y_{j})^{2}=1 \text{ for } 1\leqslant i < j\leqslant n \rangle .
 \end{align*} Note that the generators equal $y_{i}=(1,\ldots,1,\underset{i-th}{-1},1,\ldots,1)$ for all $i\in \{1,\ldots,n\}.$ We will sometimes write $1:=(1,\ldots,1)\in \Z_{2}^{n}$ for the neutral element of the group. As a wreath product $H_{n}$ equals $\mathbb{Z}_{2} \wr S_{n}=\mathbb{Z}_{2}^{n} \rtimes S_{n}$. The elements are of the form $a=(a_{1},\ldots,a_{n},\sigma)$, with $a_{i}\in \mathbb{Z}_{2}$ and $\sigma \in S_{n}$. The product is given by \begin{align*}
     (a_{1},\ldots,a_{n},\sigma)(b_{1},\ldots,b_{n},\rho)=(a_{1}b_{\sigma^{-1}(1)}, \ldots, a_{n}b_{\sigma^{-1}(n)}, \sigma \rho).
 \end{align*}
\end{rem}

Note that $H_{n}$ is isomorphic to the Weyl group of type $B_n$ and is also an example of a complex reflection group, namely $G(2,1,n)$.

\begin{defi}
    Let $a=(a_{1},\ldots,a_{n},\sigma) \in H_{n}$ and $(c_{1},\ldots, c_{n})\in \C^{n}$. Then the reflection representation $u=\C^{n}$ is defined by
    \begin{align*}
        a\cdot (c_{1},\ldots, c_{n})=(a_{1}c_{\sigma^{-1}(1)},\ldots,a_{n}c_{\sigma^{-1}(n)})
    \end{align*} where we consider $\Z_{2}=\{-1,1\}\subset \C$. If $e_{i}$ is a canonical basis element of $\C^{n}$, then $a\cdot e_{i}= a_{\sigma(i)}e_{\sigma(i)}$.
\end{defi}

\begin{defi}
    Let $a=(a_{1},\ldots,a_{n},\sigma) \in H_{n}$. The permutation representation \[ V=\C^{2n}=(\C \Z_{2})^{n}=\oplus_{i=1}^{n}(\C e_{1}^{i}\oplus \C e_{-1}^{i})\] of $H_{n}$ is defined by the $\C$-linear extension of the action
 \begin{align*}
        a \cdot e_{j}^{i}=e_{a_{\sigma(i)}\cdot j}^{\sigma(i)}
    \end{align*}
for $j\in \{-1,1\}$ and $ i \in \{1,\ldots,n\}$. Let $c_{j}^{i}\in \C$ for $j\in \Z_{2}$ and $i\in \{1,\ldots,n\}$. Then we get for $c=(c_{1}^{1} e_{1}^{1}+ c_{-1}^{1}  e_{-1}^{1},\ldots, c_{1}^{n} e_{1}^{n}+ c_{-1}^{n} e_{-1}^{n} \in V)$ that
\begin{align*}
    a \cdot c= (c_{a_{1}\cdot 1}^{\sigma^{-1}(1)} e_{1}^{1}+  c_{a_{1}\cdot (-1)}^{\sigma^{-1}(1)} e_{-1}^{1}, \ldots, c_{a_{n}\cdot 1}^{\sigma^{-1}(n)} e_{1}^{n}+  c_{a_{n}\cdot (-1)}^{\sigma^{-1}(n)} e_{-1}^{n}).
\end{align*}

\end{defi}

\begin{lemma}
    The permutation representation and the reflection representation of $H_{n}$ are faithful and self-dual.
\end{lemma}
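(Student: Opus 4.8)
The plan is to verify faithfulness and self-duality separately, handling the reflection representation $u$ first, since the statement for $V$ then follows by restriction.

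\emph{Faithfulness.} For $u=\C^n$ I would argue directly from $a\cdot e_i = a_{\sigma(i)}e_{\sigma(i)}$: if this equals $e_i$ for every $i$, then comparing which basis vector occurs forces $\sigma(i)=i$ for all $i$, and comparing the scalar then forces $a_i=1$ for all $i$, so $a$ is the neutral element. (Equivalently, the defining formula realizes $H_n$ as the group of signed permutation matrices, which acts faithfully on $\C^n$ by construction.) For $V$ I would avoid repeating the computation by exhibiting a copy of $u$ inside $V$: putting $f_i:=e_1^i-e_{-1}^i$, the action formula $a\cdot e_j^i = e_{a_{\sigma(i)}\cdot j}^{\sigma(i)}$ gives $a\cdot f_i = a_{\sigma(i)}f_{\sigma(i)}$, so $\langle f_1,\dots,f_n\rangle\subseteq V$ is a subrepresentation isomorphic to $u$. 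Since any $a$ acting trivially on $V$ acts trivially on this subspace, $\ker(H_n\to\GL(V))\subseteq\ker(H_n\to\GL(u))=\{1\}$, and $V$ is faithful.

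\emph{Self-duality.} The cleanest route is to produce an $H_n$-invariant nondegenerate bilinear form on each representation, which for a $\C$-linear representation of a finite group immediately yields an isomorphism with the dual. On $u$ I would take the standard symmetric form $\langle e_i,e_j\rangle=\delta_{ij}$; invariance is the computation $\langle a\cdot e_i,a\cdot e_j\rangle = a_{\sigma(i)}a_{\sigma(j)}\delta_{\sigma(i),\sigma(j)} = a_{\sigma(i)}^2\delta_{ij}=\delta_{ij}$, using $a_{\sigma(i)}\in\{\pm1\}$. On $V$ the key observation is that the basis $\{e_j^i\}$ is genuinely permuted, without signs, by $H_n$, so $V$ is a permutation representation; the form making this basis orthonormal is then automatically invariant and nondegenerate, giving $V\cong V^*$. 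Alternatively, both $u$ and $V$ are manifestly defined over $\R$ (even over $\Z$), so their characters are real-valued, which already forces self-duality.

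\emph{Expected difficulty.} There is no genuine obstacle here; the lemma is elementary once the twisted wreath-product conventions of the preceding definitions are unwound carefully. The only points demanding attention are bookkeeping with $\sigma(i)$ versus $\sigma^{-1}(i)$ in the action formulas and checking the bilinear form on $V$ against the \emph{unsigned} permutation action; the complementary subrepresentation spanned by $g_i:=e_1^i+e_{-1}^i$, which recovers the permutation module of $S_n$ inflated to $H_n$, provides a convenient consistency check.
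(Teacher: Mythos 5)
Your proof is correct. The paper states this lemma without giving any proof, so there is nothing to compare against; your argument is the standard one, and it is fully consistent with the paper's conventions — in particular your subspace $\langle f_1,\dots,f_n\rangle$ with $f_i=e_1^i-e_{-1}^i$ is exactly the copy $\tilde u\subset V$ appearing in Remark \ref{u subrepresentatino of V}, and the invariant-form/real-character arguments for self-duality are both valid for complex representations of a finite group.
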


\begin{rem} \label{u subrepresentatino of V}
    Note that $u\cong \tilde{u}:=\oplus_{i=1}^{n}\C(e_{1}^{i}-e_{-1}^{i})\subset V$ is a subrepresentation of $V$. Another interesting $n$-dimensional subrepresentation of $V$ is $v=\oplus_{i=1}^{n}\C(e_{1}^{i}+e_{-1}^{i})$, this is the complement of $\tilde{u}$ in $V$.

\end{rem}



\section{The Deligne categories attached to the permutation and reflection representation}

In this section we define two different interpolation categories for the hyperoctahedral groups, respectively modelled by the tensor products of the permutation representations and the tensor products of the reflection representations. The first one, $\text{Par}(\Z_{2},t)^{Kar}$ (the permutation category), has been studied by  Knop \cite{Knop_2007} and  Likeng-Savage \cite{Nyobe_Likeng_2021}; and the category $\underline{\text{Rep}}(H_{t})$ by Flake-Maassen \cite{Flake_2021}, the reflection category. 

\subsection{Partition diagrams}

We recall some basics about partition theory (including even partitions and $\mathbb{Z}_2$-coloured partitions) based on \cite{comes2011delignes}, \cite{Flake_2021} and \cite{Nyobe_Likeng_2021}. We define normal forms for coloured and non-coloured partitions. This was inspired on the idea of a standard decomposition given in the proof of \cite[Theorem 4.4]{Nyobe_Likeng_2021} and the normal form in \cite[Remark 1.4.16]{kock_2003}.

\begin{defi}
    Let $k,l\in \N$. We denote the set of all partitions of the set \\$\{1,\ldots,k,1',\ldots,l'\}$ by $P(k,l)$ and the set of all partitions by $P:=\sqcup_{k,l\in\mathbb{N}} P(k,l)$. Note that we will add or remove accents in the sets $\{1,\ldots,k,1',\ldots,l'\}$ depending on the context we are working in. We call $(k,l)$ the size of the partitions in $P(k,l)$. The elements of some partition $p\in P(k,l)$ are called components, parts or blocks. For partitions $p,q \in P(k,l)$ we call $q$ coarser as $p$ if every part in $p$ is a subset of some part in $q$. We can associate a partition diagram to each partition $p \in P(k,l)$ by placing $k$ vertices in a horizontal row and $l$ vertices in a horizontal row, above the first one. We label the vertices from left to right by the elements of the set $\{1,\ldots,k,1',\ldots,l'\}$. We draw a line between two vertices in the diagram if and only if the corresponding labels lie in the same component. We call two such partition diagrams equivalent if the set partitions are the same.
    
\begin{center}
\tikzset{every picture/.style={line width=0.75pt}} 
\begin{tikzpicture}[x=0.75pt,y=0.75pt,yscale=-1,xscale=1]
\draw  [line width=3] [line join = round][line cap = round] (229.33,90.92) .. controls (229.33,90.92) and (229.33,90.92) .. (229.33,90.92) ;
\draw  [line width=3] [line join = round][line cap = round] (249.33,90.92) .. controls (249.33,90.92) and (249.33,90.92) .. (249.33,90.92) ;
\draw  [line width=3] [line join = round][line cap = round] (266.33,90.92) .. controls (266.33,90.92) and (266.33,90.92) .. (266.33,90.92) ;
\draw  [line width=3] [line join = round][line cap = round] (286.33,90.92) .. controls (286.33,90.92) and (286.33,90.92) .. (286.33,90.92) ;
\draw  [line width=3] [line join = round][line cap = round] (240.33,131.92) .. controls (240.33,131.92) and (240.33,131.92) .. (240.33,131.92) ;
\draw  [line width=3] [line join = round][line cap = round] (257.33,131.92) .. controls (257.33,131.92) and (257.33,131.92) .. (257.33,131.92) ;
\draw  [line width=3] [line join = round][line cap = round] (277.33,131.92) .. controls (277.33,131.92) and (277.33,131.92) .. (277.33,131.92) ;
\draw  [line width=3] [line join = round][line cap = round] (357.33,94.92) .. controls (357.33,94.92) and (357.33,94.92) .. (357.33,94.92) ;
\draw  [line width=3] [line join = round][line cap = round] (377.33,94.92) .. controls (377.33,94.92) and (377.33,94.92) .. (377.33,94.92) ;
\draw  [line width=3] [line join = round][line cap = round] (394.33,94.92) .. controls (394.33,94.92) and (394.33,94.92) .. (394.33,94.92) ;
\draw  [line width=3] [line join = round][line cap = round] (414.33,93.92) .. controls (414.33,93.92) and (414.33,93.92) .. (414.33,93.92) ;
\draw  [line width=3] [line join = round][line cap = round] (368.33,137.92) .. controls (368.33,137.92) and (368.33,137.92) .. (368.33,137.92) ;
\draw  [line width=3] [line join = round][line cap = round] (385.33,136.92) .. controls (385.33,136.92) and (385.33,136.92) .. (385.33,136.92) ;
\draw  [line width=3] [line join = round][line cap = round] (405.33,136.92) .. controls (405.33,136.92) and (405.33,136.92) .. (405.33,136.92) ;
\draw    (242.43,131.86) .. controls (246.43,114.86) and (230.43,112.86) .. (230.43,90.86) ;
\draw    (230.43,90.86) .. controls (251.43,96.86) and (242.43,108.86) .. (276.43,132.86) ;
\draw    (249.43,89.86) -- (257.43,131.86) ;
\draw    (249.43,89.86) .. controls (251.43,106.86) and (266.43,107.86) .. (266.43,90.86) ;
\draw    (266.43,90.86) .. controls (268.43,107.86) and (285.43,108.86) .. (285.43,91.86) ;
\draw    (368.43,137.86) .. controls (376.43,119.86) and (392.43,115.86) .. (406.43,137.86) ;
\draw    (357.43,94.86) -- (368.43,137.86) ;
\draw    (377.43,94.86) -- (385.43,136.86) ;
\draw    (394.43,95.86) -- (385.43,136.86) ;
\draw    (414.43,93.86) -- (385.43,136.86) ;

\draw (317,107) node [anchor=north west][inner sep=0.75pt]   [align=left] {=};
\draw (231,171) node [anchor=north west][inner sep=0.75pt]  [font=\scriptsize] [align=left] {Example:};
\draw (280,170.4) node [anchor=north west][inner sep=0.75pt]  [font=\scriptsize]  {$\{\{1,3,1'\} ,\{2,2',3',4'\}\} \in P( 3,4)$};
\draw (233,138) node [anchor=north west][inner sep=0.75pt]  [font=\footnotesize] [align=left] {1};
\draw (253,139) node [anchor=north west][inner sep=0.75pt]  [font=\footnotesize] [align=left] {2};
\draw (274,139) node [anchor=north west][inner sep=0.75pt]  [font=\footnotesize] [align=left] {3};
\draw (361,142) node [anchor=north west][inner sep=0.75pt]  [font=\footnotesize] [align=left] {1};
\draw (381,143) node [anchor=north west][inner sep=0.75pt]  [font=\footnotesize] [align=left] {2};
\draw (402,143) node [anchor=north west][inner sep=0.75pt]  [font=\footnotesize] [align=left] {3};
\draw (225,71) node [anchor=north west][inner sep=0.75pt]  [font=\footnotesize] [align=left] {1'};
\draw (244,71) node [anchor=north west][inner sep=0.75pt]  [font=\footnotesize] [align=left] {2'};
\draw (262,71) node [anchor=north west][inner sep=0.75pt]  [font=\footnotesize] [align=left] {3'};
\draw (281,71) node [anchor=north west][inner sep=0.75pt]  [font=\footnotesize] [align=left] {4'};
\draw (351,74) node [anchor=north west][inner sep=0.75pt]  [font=\footnotesize] [align=left] {1'};
\draw (372,74) node [anchor=north west][inner sep=0.75pt]  [font=\footnotesize] [align=left] {2'};
\draw (390,74) node [anchor=north west][inner sep=0.75pt]  [font=\footnotesize] [align=left] {3'};
\draw (409,74) node [anchor=north west][inner sep=0.75pt]  [font=\footnotesize] [align=left] {4'};
\end{tikzpicture}
\end{center}
\end{defi}

    There are some important operations on the sets of partitions (see \cite{comes2011delignes}, \cite{Flake_2021}).
    
\begin{itemize}
\item \textit{Horizontal concatenation}: Let $p\in P(k,l)$ and $q\in P(m,n)$, then $p\otimes q \in P(k+m,l+n)$.
\item \textit{Involution}: If $p\in P(k,l)$, then $p^{*}\in P(l,k)$.
\item \textit{Vertical concatenation}: For $p\in P(k,l)$ and $q\in P(l,m)$ we denote the stacking of $q$ on top of $p$ by $q\star p$ or simply $qp$. We denote the number of loops in $q p$ by $l(q,p)$. 
\end{itemize}

\begin{defi} \label{definition permutation partitions}
       For every $n\in \N$, there is an injective monoid homomorphism $\phi:S_{n} \hookrightarrow P(n,n)$, which is defined by sending the cycle $(1,i)$ to the partition \begin{align*}
        \{\{1,i'\},\{2,2'\},\ldots \{i,1'\},\ldots, \{n,n'\}\} \in P(n,n).
    \end{align*}
    Because the cycles $\{(1,i)|2\leqslant i \leqslant n\}$ generate $S_{n}$, this defines the map $\phi$. We call the partitions in the image of $\phi$ permutation partitions.

\end{defi}

\begin{defi} \label{non-crossing form}
    Let $p\in P(k,l)$ and let $B_{1},\ldots,B_{t}$ be its blocks, which are respectively of size $(k_{1},l_{1}),\ldots, (k_{t},l_{t})$. Define the partitions $p_{i}:= \{\{1,\ldots,k_{i},1',\ldots, l_{i}'\}\} \in P(k_{i},l_{i})$ for all $1\leqslant i\leqslant t$. A non-crossing form of $p$ is some horizontal concatenation $p_{i_{1}}\otimes \ldots \otimes p_{i_{t}}$, where $i_{1},\ldots, i_{t}$ is some permutation of $\{1,\ldots,t\}$.
    
\begin{center}
\tikzset{every picture/.style={line width=0.75pt}} 

\begin{tikzpicture}[x=0.75pt,y=0.75pt,yscale=-1,xscale=1]

\draw  [line width=3] [line join = round][line cap = round] (122.13,310.08) .. controls (122.13,310.08) and (122.13,310.08) .. (122.13,310.08) ;
\draw  [line width=3] [line join = round][line cap = round] (151.12,310.08) .. controls (151.12,310.08) and (151.12,310.08) .. (151.12,310.08) ;
\draw  [line width=3] [line join = round][line cap = round] (179.23,310.08) .. controls (179.23,310.08) and (179.23,310.08) .. (179.23,310.08) ;
\draw  [line width=3] [line join = round][line cap = round] (207.22,310.08) .. controls (207.22,310.08) and (207.22,310.08) .. (207.22,310.08) ;
\draw  [line width=3] [line join = round][line cap = round] (119.13,254.08) .. controls (119.13,254.08) and (119.13,254.08) .. (119.13,254.08) ;
\draw  [line width=3] [line join = round][line cap = round] (149.12,255.08) .. controls (149.12,255.08) and (149.12,255.08) .. (149.12,255.08) ;
\draw  [line width=3] [line join = round][line cap = round] (175.23,254.08) .. controls (175.23,254.08) and (175.23,254.08) .. (175.23,254.08) ;
\draw  [line width=3] [line join = round][line cap = round] (204.22,254.08) .. controls (204.22,254.08) and (204.22,254.08) .. (204.22,254.08) ;
\draw    (180.29,309.96) .. controls (174.29,285.96) and (146.29,276.96) .. (118.29,254.96) ;
\draw    (180.29,309.96) .. controls (186.29,297.96) and (197.29,284.96) .. (207.29,310.96) ;
\draw    (203.29,254.96) -- (121.29,309.96) ;
\draw    (149.29,253.96) .. controls (153.29,271.96) and (173.29,264.96) .. (175.29,253.96) ;
\draw    (149.29,253.96) -- (151.29,309.96) ;
\draw  [line width=3] [line join = round][line cap = round] (306.12,316.08) .. controls (306.12,316.08) and (306.12,316.08) .. (306.12,316.08) ;
\draw  [line width=3] [line join = round][line cap = round] (304.12,260.08) .. controls (304.12,260.08) and (304.12,260.08) .. (304.12,260.08) ;
\draw  [line width=3] [line join = round][line cap = round] (330.23,260.08) .. controls (330.23,260.08) and (330.23,260.08) .. (330.23,260.08) ;
\draw    (304.29,259.96) .. controls (308.29,277.96) and (328.29,270.96) .. (330.29,259.96) ;
\draw    (304.29,259.96) -- (306.29,315.96) ;
\draw  [line width=3] [line join = round][line cap = round] (349.23,315.08) .. controls (349.23,315.08) and (349.23,315.08) .. (349.23,315.08) ;
\draw  [line width=3] [line join = round][line cap = round] (376.22,315.08) .. controls (376.22,315.08) and (376.22,315.08) .. (376.22,315.08) ;
\draw  [line width=3] [line join = round][line cap = round] (347.13,260.08) .. controls (347.13,260.08) and (347.13,260.08) .. (347.13,260.08) ;
\draw    (349.29,314.96) .. controls (348.29,291.96) and (351.29,284.96) .. (347.29,259.96) ;
\draw    (349.29,314.96) .. controls (355.29,302.96) and (366.29,289.96) .. (376.29,315.96) ;
\draw  [line width=3] [line join = round][line cap = round] (391.13,316.08) .. controls (391.13,316.08) and (391.13,316.08) .. (391.13,316.08) ;
\draw  [line width=3] [line join = round][line cap = round] (390.22,262.08) .. controls (390.22,262.08) and (390.22,262.08) .. (390.22,262.08) ;
\draw    (390.29,259.96) -- (392.29,315.96) ;
\draw  [line width=3] [line join = round][line cap = round] (468.23,313.08) .. controls (468.23,313.08) and (468.23,313.08) .. (468.23,313.08) ;
\draw  [line width=3] [line join = round][line cap = round] (495.22,313.08) .. controls (495.22,313.08) and (495.22,313.08) .. (495.22,313.08) ;
\draw  [line width=3] [line join = round][line cap = round] (466.13,258.08) .. controls (466.13,258.08) and (466.13,258.08) .. (466.13,258.08) ;
\draw    (468.29,312.96) .. controls (467.29,289.96) and (470.29,282.96) .. (466.29,257.96) ;
\draw    (468.29,312.96) .. controls (474.29,300.96) and (485.29,287.96) .. (495.29,313.96) ;
\draw  [line width=3] [line join = round][line cap = round] (443.13,313.08) .. controls (443.13,313.08) and (443.13,313.08) .. (443.13,313.08) ;
\draw  [line width=3] [line join = round][line cap = round] (442.22,259.08) .. controls (442.22,259.08) and (442.22,259.08) .. (442.22,259.08) ;
\draw    (442.29,256.96) -- (444.29,312.96) ;
\draw  [line width=3] [line join = round][line cap = round] (510.12,315.08) .. controls (510.12,315.08) and (510.12,315.08) .. (510.12,315.08) ;
\draw  [line width=3] [line join = round][line cap = round] (508.12,259.08) .. controls (508.12,259.08) and (508.12,259.08) .. (508.12,259.08) ;
\draw  [line width=3] [line join = round][line cap = round] (534.23,259.08) .. controls (534.23,259.08) and (534.23,259.08) .. (534.23,259.08) ;
\draw    (508.29,258.96) .. controls (512.29,276.96) and (532.29,269.96) .. (534.29,258.96) ;
\draw    (508.29,258.96) -- (510.29,314.96) ;

\draw (157,332.07) node [anchor=north west][inner sep=0.75pt]    {$p$};
\draw (481,279.07) node [anchor=north west][inner sep=0.75pt]    {$\otimes $};
\draw (446,278.07) node [anchor=north west][inner sep=0.75pt]    {$\otimes $};
\draw (294,333.67) node [anchor=north west][inner sep=0.75pt]   [align=left] {Examples of non-crossing forms of $\displaystyle p$};
\draw (319,281.07) node [anchor=north west][inner sep=0.75pt]    {$\otimes $};
\draw (359,279.07) node [anchor=north west][inner sep=0.75pt]    {$\otimes $};

\end{tikzpicture}
\end{center}
\end{defi}

\begin{prop} \label{partition can be written by permutation partitions}
    Every partition $p\in P(k,l)$ can be written as $\phi(\sigma) \circ p' \circ \phi(\rho)$, where $\sigma \in S_{l}$, $\rho \in S_{k}$ and $p'$ is some non-crossing form of $p$. We call $\phi(\sigma) \circ p' \circ \phi(\rho)$ a normal form of $p$.
\end{prop}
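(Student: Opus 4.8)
The plan is to show that any partition diagram can be untangled into a planar (non-crossing) piece by pre- and post-composing with permutation diagrams, by induction on the complexity of the crossings. The statement is essentially the diagrammatic analogue of the fact that every set partition of $\{1,\dots,k\} \sqcup \{1',\dots,l'\}$ can be sorted so that its blocks occupy consecutive positions on the top and bottom rows.

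First I would make precise what "complexity" to induct on. Given $p \in P(k,l)$ with blocks $B_1,\dots,B_t$, list the bottom-row vertices $1,\dots,k$ together with the block index of the block they belong to, and similarly for the top row $1',\dots,l'$. Say $p$ is \emph{sorted} if for some ordering $i_1,\dots,i_t$ of $\{1,\dots,t\}$ the bottom row reads (block $i_1$, then block $i_2$, \dots) in consecutive runs and the top row reads in the \emph{same} order of block indices; a sorted partition is exactly a non-crossing form $p_{i_1}\otimes\cdots\otimes p_{i_t}$ in the sense of Definition \ref{non-crossing form}. The number of "inversions" — pairs of positions witnessing a failure of this consecutiveness/consistency condition, counted on the bottom row, on the top row, and between the two rows — gives a non-negative integer statistic that vanishes precisely when $p$ is sorted.

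The key step is the reduction: if $p$ is not sorted, I would exhibit a transposition $\tau$ (acting on the bottom row, say $\tau \in S_k$) such that $p \circ \phi(\tau)$ has strictly smaller statistic — concretely, pick two adjacent bottom vertices $j, j+1$ lying in different blocks $B_a, B_b$ with an element of $B_a$ appearing to the right of an element of $B_b$ somewhere, and swap them; because $\phi$ is a monoid homomorphism (Definition \ref{definition permutation partitions}) composing with $\phi(\tau)$ literally transposes those two columns of the diagram. Symmetrically one uses transpositions in $S_l$ on the top row. Iterating, after finitely many such moves we reach $\phi(\sigma_0)\circ p'' \circ \phi(\rho_0)$ with $p''$ sorted, i.e. a non-crossing form; collecting the accumulated transpositions into $\sigma \in S_l$ and $\rho \in S_k$ and using that $\phi$ is a homomorphism (so products of $\phi(\tau_i)$ are $\phi$ of the product) gives the claimed normal form $\phi(\sigma)\circ p' \circ \phi(\rho)$. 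One should also check the boundary bookkeeping: loops created by vertical concatenation are not an issue here since permutation partitions are built from through-strands only, so $l(\phi(\sigma),p')=0$ and no scalar corrections appear.

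The main obstacle I anticipate is purely organizational rather than deep: setting up the inversion statistic so that a single adjacent transposition is guaranteed to decrease it, and verifying that the three sources of disorder (bottom row, top row, cross-row consistency of block orders) can be resolved independently — one must be careful that sorting the bottom row does not re-disorder the top, which is why the "same order of block indices" condition and the freedom to choose \emph{which} non-crossing form $p'$ we land on both matter. An alternative, slicker route avoiding explicit statistics: choose once and for all a non-crossing form $p'$ of $p$; then $p$ and $p'$ have identical block structures, so there are bijections of the bottom vertices and of the top vertices carrying blocks of $p'$ to blocks of $p$, giving permutations $\rho \in S_k$, $\sigma \in S_l$ with $p = \phi(\sigma) \circ p' \circ \phi(\rho)$ directly from the definition of composition of partition diagrams. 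I would likely present this second argument as the proof and relegate the inductive picture to a remark.
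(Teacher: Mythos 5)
The paper states this proposition without proof (it is treated as elementary background; the ``see \cite{Tyriard} for details'' remark is attached to the two subsequent propositions), so there is nothing to compare against line by line. Your second, direct argument is the right one and is complete: fix any non-crossing form $p'=p_{i_1}\otimes\cdots\otimes p_{i_t}$; since $p'$ has blocks of the same sizes $(k_{i_j},l_{i_j})$ as $p$, arranged in consecutive runs, there are permutations $\rho\in S_k$ and $\sigma\in S_l$ matching the bottom (resp.\ top) vertices of each block of $p$ with the corresponding consecutive run in $p'$; because permutation partitions consist only of through-strands, composing with $\phi(\rho)$ and $\phi(\sigma)$ merely relabels the boundary vertices, creates no loops, and neither merges nor splits blocks, so $\phi(\sigma)\circ p'\circ\phi(\rho)$ has exactly the blocks of $p$ and hence equals $p$. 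The only bookkeeping to watch is whether one needs $\rho$ or $\rho^{-1}$ given the convention $\phi(\pi)=\{\{j,\pi(j)'\}\colon j\}$, which is a one-line adjustment. The bubble-sort induction in your first paragraph would also work, but as you note it is organizationally heavier and adds nothing here; presenting the direct bijection argument and discarding the induction is the right call.
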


\begin{defi}
    Let $k,l \in \N$. We call a partition $p\in P(k,l)$ even if all its components contain an even number of vertices. We denote the set of all even partitions in $P(k,l)$ by $P_{even}(k,l)$ and the set of all even partitions by $P_{even}:=\sqcup_{k,l\in\mathbb{N}} P_{even}(k,l)\subset P$. 
\end{defi}

The proofs of the following two propositions are elementary (see \cite{Tyriard} for details).

\begin{prop} \label{peven closed under operations}
    $P_{even}$ is closed under involution, vertical concatenation and horizontal concatenation.
\end{prop}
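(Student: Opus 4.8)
The plan is to treat the three closure properties in increasing order of difficulty. Closure under involution is immediate: $p^{*}$ is obtained from $p \in P_{even}(k,l)$ simply by interchanging the roles of the two rows, i.e.\ by the relabelling $i \leftrightarrow i'$ of the underlying vertex set. This is a bijection that carries blocks to blocks without changing their cardinalities, so all blocks of $p^{*}$ remain even. Closure under horizontal concatenation is almost as easy: for $p \in P_{even}(k,l)$ and $q \in P_{even}(m,n)$, the blocks of $p \otimes q$ are, by definition, the blocks of $p$ together with the (shifted) blocks of $q$, with no block of $p$ meeting a block of $q$; hence every block of $p \otimes q$ is a block of $p$ or of $q$, and is therefore even.

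The only case requiring work is vertical concatenation, and here the plan is a parity count. Given $p \in P_{even}(k,l)$ and $q \in P_{even}(l,m)$, I would realise $q \star p$ on the three-row vertex set $X = \{1,\ldots,k\} \sqcup M \sqcup \{1',\ldots,m'\}$, where $M = \{\mu_{1},\ldots,\mu_{l}\}$ is the middle row: let $\bar p$ be the partition of $\{1,\ldots,k\} \sqcup M$ induced by $p$ after identifying the top row of $p$ with $M$, and $\bar q$ the partition of $M \sqcup \{1',\ldots,m'\}$ induced by $q$ after identifying the bottom row of $q$ with $M$. Let $r$ be their join in the lattice of set partitions of $X$ (equivalently, the partition into connected components of the graph on $X$ joining any two vertices that share a block of $\bar p$ or of $\bar q$). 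Then $q \star p$ is obtained from $r$ by deleting the vertices of $M$ from every block, the number $l(q,p)$ of loops being exactly the number of blocks of $r$ contained entirely in $M$, which disappear.

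Now fix a block $B$ of $r$, and let $P_{1},\ldots,P_{a}$ be the blocks of $\bar p$ inside $B$ and $Q_{1},\ldots,Q_{b}$ the blocks of $\bar q$ inside $B$. Since the $P_{i}$ are pairwise disjoint with union $B \cap (\{1,\ldots,k\} \sqcup M)$, we get $\sum_{i} |P_{i}| = |B \cap \{1,\ldots,k\}| + |B \cap M|$; as each $|P_{i}|$ is even, this forces $|B \cap \{1,\ldots,k\}| \equiv |B \cap M| \pmod 2$, and symmetrically $|B \cap \{1',\ldots,m'\}| \equiv |B \cap M| \pmod 2$ using the $Q_{j}$. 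Adding these two congruences gives $|B \cap \{1,\ldots,k\}| + |B \cap \{1',\ldots,m'\}| \equiv 2\,|B \cap M| \equiv 0 \pmod 2$, and the left-hand side is precisely the size of the block $B \setminus M$ of $q \star p$ whenever that block is nonempty. Hence every block of $q \star p$ is even, so $P_{even}$ is closed under vertical concatenation. I expect the main obstacle to be purely bookkeeping: making the passage from $p,q$ to $\bar p, \bar q, r$ and back to $q \star p$ precise enough that the statements ``the blocks of $q \star p$ are exactly the nonempty sets $B \setminus M$'' and ``the loops are exactly the $B \subseteq M$'' are genuinely justified rather than merely asserted.
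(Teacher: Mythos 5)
Your proof is correct. The paper itself does not spell out an argument for this proposition (it declares the proofs elementary and defers to the cited thesis for details), so there is nothing to compare line by line; but your treatment is the standard one, and the only nontrivial part — vertical concatenation — is handled exactly as one would hope: each block $B$ of the join $r$ is a disjoint union of blocks of $\bar p$ covering $B \cap (\{1,\dots,k\}\sqcup M)$ and a disjoint union of blocks of $\bar q$ covering $B \cap (M \sqcup \{1',\dots,m'\})$, so evenness of $p$ and $q$ gives $|B\cap\{1,\dots,k\}| \equiv |B\cap M| \equiv |B\cap\{1',\dots,m'\}| \pmod 2$, and adding the outer two congruences shows $|B\setminus M|$ is even. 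The identification of loops with the blocks of $r$ contained in $M$ and of the blocks of $q\star p$ with the nonempty sets $B\setminus M$ is indeed just the definition of vertical concatenation, so the remaining bookkeeping you flag is genuinely routine.
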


\begin{prop} \label{even partition sn tn form}
    Every even partition $p\in P_{even}$ can be constructed by applying the involution, horizontal and vertical concatenation operations to the partitions \\\CROSS$=\{\{1,2'\},\{2,1'\}\}$, \FOURLEGS= $\{1,2,1',2'\}$, \idstrand=$\{1,1'\}$ and \CAP=$\{\{1,2\}\}$.  

\end{prop}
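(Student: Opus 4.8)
The plan is to bootstrap from the normal form of Proposition \ref{partition can be written by permutation partitions}: up to pre- and post-composition with permutation partitions, every partition is a horizontal concatenation of its single-block partitions, so it is enough to generate permutation partitions and single even blocks. (Proposition \ref{peven closed under operations} already supplies the easy converse: any partition assembled from \CROSS, \FOURLEGS, \idstrand, \CAP\ by the three operations is automatically even, so only the reverse assertion needs proof.)

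First I would dispatch the permutation partitions. The homomorphism $\phi$ of Definition \ref{definition permutation partitions} sends an adjacent transposition $s_m=(m,m+1)$ to the evident crossing diagram $\idstrand^{\otimes(m-1)}\otimes\CROSS\otimes\idstrand^{\otimes(n-1-m)}$; since $\phi$ is a monoid homomorphism and the $s_m$ generate $S_n$, every permutation partition is a $\circ$-product of such diagrams, hence lies in the set generated by \CROSS\ and \idstrand. Now let $p\in P_{even}(k,l)$ be arbitrary and write, by Proposition \ref{partition can be written by permutation partitions}, $p=\phi(\sigma)\circ p'\circ\phi(\rho)$ with $p'=b_1\otimes\cdots\otimes b_t$ a non-crossing form of $p$, where (Definition \ref{non-crossing form}) $b_j=\{\{1,\dots,k_j,1',\dots,l_j'\}\}$ is the single-block partition on the $j$-th block of $p$. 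Since $p$ is even, $k_j+l_j$ is even for each $j$, so everything reduces to the claim that each single-block partition $b_{k,l}:=\{\{1,\dots,k,1',\dots,l'\}\}$ with $k+l$ even lies in the generated set.

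I would prove this claim in two steps. The cases $k+l\le 2$ are among the generators: $b_{1,1}$ is \idstrand, $b_{2,0}$ is \CAP, $b_{0,2}$ is \CUP\ (the involution of \CAP), and $b_{0,0}$ is the empty concatenation. For the general case, first build the diagonal blocks $b_{k,k}$ by induction on $k$, with base $b_{1,1}=\idstrand$ and the identity $b_{k,k}=(\idstrand^{\otimes(k-2)}\otimes\FOURLEGS)\circ(b_{k-1,k-1}\otimes\idstrand)$: stacking these two diagrams fuses all $2k$ boundary points into one block and, because every block of the upper diagram meets its top row, produces no closed loops. Next, for $k+l$ even with $k\ge l$ (apply the involution $*$, which swaps $k$ and $l$, otherwise), put $2m=k-l$ and use $b_{k,l}=(\CAP^{\otimes m}\otimes\idstrand^{\otimes l})\circ b_{k,k}$: the $m$ caps merge pairs of the already-connected top legs of $b_{k,k}$, again with no loops, leaving precisely $b_{k,l}$. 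Substituting back, $p=\phi(\sigma)\circ(b_1\otimes\cdots\otimes b_t)\circ\phi(\rho)$ is manifestly built from the four diagrams.

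The main obstacle is not conceptual but bookkeeping: one must be faithful to the paper's stacking convention for partitions and verify carefully which blocks merge in the two compositions above — for instance that, in the inductive step, the \FOURLEGS-rung attaches simultaneously to the growing block and to the last through-strand — so that the outputs really are the single-block partitions claimed. (Loops do not affect the underlying partition and so are harmless here, but one should still confirm none arise.) A minor point is that the non-crossing form in Proposition \ref{partition can be written by permutation partitions} is assembled from the blocks of $p$ itself, so that evenness of $p$ is inherited by each factor $b_j$; this is immediate from Definition \ref{non-crossing form}.
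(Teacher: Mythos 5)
Your proof is correct, and it follows essentially the same route the paper takes: the paper omits a proof of this proposition (deferring to the thesis reference), but the identical strategy — reduce via the normal form of Proposition \ref{partition can be written by permutation partitions} to permutation partitions (generated by \CROSS\ and \idstrand) plus single even blocks, then assemble each block from \FOURLEGS\ and \CAP\ — is exactly what appears in the proof of Theorem \ref{Functor G tilde}, where the canonical preimages $s_n$ and $t_n$ play the role of your recursion for $b_{k,l}$. Your specific recursion $b_{k,k}=(\idstrand^{\otimes(k-2)}\otimes\FOURLEGS)\circ(b_{k-1,k-1}\otimes\idstrand)$ followed by capping off differs only cosmetically from the paper's $s_l\circ(t_{k-l+1}\otimes\idstrand^{\otimes(l-1)})$, and your loop/merging bookkeeping checks out.
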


\begin{defi}
    We denote the set of $\Z_{2}$-coloured partitions by $P_{\Z_{2}}= \sqcup P_{\Z_{2}}(k,l)$. An element of $(p,z)\in P_{\Z_{2}}(k,l)$ is pair consisting of a partition $p \in P(k,l)$ and a vector $z\in (\Z_{2})^{k+l}$. Such a $\Z_{2}$-coloured partition can be visualised by a partition diagram with labeled vertices. Unlabeled vertices will be assumed to be labeled with $1$. Two $\Z_{2}$-coloured partitions are equivalent, denoted by $\simeq$, when the the corresponding partition diagrams are the same and when for each block the labels of one $\Z_{2}$-coloured partition are obtained by multiplying all labels of the correseponding block in the other $\Z_{2}$-coloured partition by the same element of  $\Z_{2}=\{-1,1\}$.
    \begin{center}
        
\tikzset{every picture/.style={line width=0.75pt}} 

\begin{tikzpicture}[x=0.75pt,y=0.75pt,yscale=-1,xscale=1]

\draw  [line width=3] [line join = round][line cap = round] (165.33,81.92) .. controls (165.33,81.92) and (165.33,81.92) .. (165.33,81.92) ;
\draw  [line width=3] [line join = round][line cap = round] (185.33,81.92) .. controls (185.33,81.92) and (185.33,81.92) .. (185.33,81.92) ;
\draw  [line width=3] [line join = round][line cap = round] (202.33,81.92) .. controls (202.33,81.92) and (202.33,81.92) .. (202.33,81.92) ;
\draw  [line width=3] [line join = round][line cap = round] (222.33,81.92) .. controls (222.33,81.92) and (222.33,81.92) .. (222.33,81.92) ;
\draw  [line width=3] [line join = round][line cap = round] (176.33,122.92) .. controls (176.33,122.92) and (176.33,122.92) .. (176.33,122.92) ;
\draw  [line width=3] [line join = round][line cap = round] (193.33,122.92) .. controls (193.33,122.92) and (193.33,122.92) .. (193.33,122.92) ;
\draw  [line width=3] [line join = round][line cap = round] (213.33,122.92) .. controls (213.33,122.92) and (213.33,122.92) .. (213.33,122.92) ;
\draw    (178.43,122.86) .. controls (182.43,105.86) and (166.43,103.86) .. (166.43,81.86) ;
\draw    (166.43,81.86) .. controls (187.43,87.86) and (178.43,99.86) .. (212.43,123.86) ;
\draw    (185.43,80.86) -- (193.43,122.86) ;
\draw    (185.43,80.86) .. controls (187.43,97.86) and (202.43,98.86) .. (202.43,81.86) ;
\draw    (202.43,81.86) .. controls (204.43,98.86) and (221.43,99.86) .. (221.43,82.86) ;
\draw  [line width=3] [line join = round][line cap = round] (284.33,84.92) .. controls (284.33,84.92) and (284.33,84.92) .. (284.33,84.92) ;
\draw  [line width=3] [line join = round][line cap = round] (304.33,84.92) .. controls (304.33,84.92) and (304.33,84.92) .. (304.33,84.92) ;
\draw  [line width=3] [line join = round][line cap = round] (321.33,84.92) .. controls (321.33,84.92) and (321.33,84.92) .. (321.33,84.92) ;
\draw  [line width=3] [line join = round][line cap = round] (341.33,84.92) .. controls (341.33,84.92) and (341.33,84.92) .. (341.33,84.92) ;
\draw  [line width=3] [line join = round][line cap = round] (295.33,125.92) .. controls (295.33,125.92) and (295.33,125.92) .. (295.33,125.92) ;
\draw  [line width=3] [line join = round][line cap = round] (312.33,125.92) .. controls (312.33,125.92) and (312.33,125.92) .. (312.33,125.92) ;
\draw  [line width=3] [line join = round][line cap = round] (332.33,125.92) .. controls (332.33,125.92) and (332.33,125.92) .. (332.33,125.92) ;
\draw    (297.43,125.86) .. controls (301.43,108.86) and (285.43,106.86) .. (285.43,84.86) ;
\draw    (285.43,84.86) .. controls (306.43,90.86) and (297.43,102.86) .. (331.43,126.86) ;
\draw    (304.43,83.86) -- (312.43,125.86) ;
\draw    (304.43,83.86) .. controls (306.43,100.86) and (321.43,101.86) .. (321.43,84.86) ;
\draw    (321.43,84.86) .. controls (323.43,101.86) and (340.43,102.86) .. (340.43,85.86) ;
\draw  [line width=3] [line join = round][line cap = round] (404.33,84.92) .. controls (404.33,84.92) and (404.33,84.92) .. (404.33,84.92) ;
\draw  [line width=3] [line join = round][line cap = round] (424.33,84.92) .. controls (424.33,84.92) and (424.33,84.92) .. (424.33,84.92) ;
\draw  [line width=3] [line join = round][line cap = round] (441.33,84.92) .. controls (441.33,84.92) and (441.33,84.92) .. (441.33,84.92) ;
\draw  [line width=3] [line join = round][line cap = round] (461.33,84.92) .. controls (461.33,84.92) and (461.33,84.92) .. (461.33,84.92) ;
\draw  [line width=3] [line join = round][line cap = round] (415.33,125.92) .. controls (415.33,125.92) and (415.33,125.92) .. (415.33,125.92) ;
\draw  [line width=3] [line join = round][line cap = round] (432.33,125.92) .. controls (432.33,125.92) and (432.33,125.92) .. (432.33,125.92) ;
\draw  [line width=3] [line join = round][line cap = round] (452.33,125.92) .. controls (452.33,125.92) and (452.33,125.92) .. (452.33,125.92) ;
\draw    (417.43,125.86) .. controls (421.43,108.86) and (405.43,106.86) .. (405.43,84.86) ;
\draw    (405.43,84.86) .. controls (426.43,90.86) and (417.43,102.86) .. (451.43,126.86) ;
\draw    (424.43,83.86) -- (432.43,125.86) ;
\draw    (424.43,83.86) .. controls (426.43,100.86) and (441.43,101.86) .. (441.43,84.86) ;
\draw    (441.43,84.86) .. controls (443.43,101.86) and (460.43,102.86) .. (460.43,85.86) ;

\draw (154,171) node [anchor=north west][inner sep=0.75pt]  [font=\scriptsize] [align=left] {Example:};
\draw (203,170.4) node [anchor=north west][inner sep=0.75pt]  [font=\scriptsize]  {$(\{\{1,3,1'\} ,\{2,2',3',4'\}\} ,( 1,-1,-1,-1,1,1,-1)) \in P_{\{\mathbb{Z}_{\{2\}}\}}( 3,4)$};
\draw (169,134) node [anchor=north west][inner sep=0.75pt]  [font=\footnotesize] [align=left] {1};
\draw (189,135) node [anchor=north west][inner sep=0.75pt]  [font=\footnotesize] [align=left] {2};
\draw (210,135) node [anchor=north west][inner sep=0.75pt]  [font=\footnotesize] [align=left] {3};
\draw (161,58) node [anchor=north west][inner sep=0.75pt]  [font=\footnotesize] [align=left] {1'};
\draw (180,58) node [anchor=north west][inner sep=0.75pt]  [font=\footnotesize] [align=left] {2'};
\draw (198,58) node [anchor=north west][inner sep=0.75pt]  [font=\footnotesize] [align=left] {3'};
\draw (217,58) node [anchor=north west][inner sep=0.75pt]  [font=\footnotesize] [align=left] {4'};
\draw (194,73) node [anchor=north west][inner sep=0.75pt]  [font=\tiny,color={rgb, 255:red, 73; green, 145; blue, 228 }  ,opacity=1 ] [align=left] {1};
\draw (165,126) node [anchor=north west][inner sep=0.75pt]  [font=\tiny,color={rgb, 255:red, 73; green, 145; blue, 228 }  ,opacity=1 ] [align=left] {1};
\draw (153,73) node [anchor=north west][inner sep=0.75pt]  [font=\tiny,color={rgb, 255:red, 73; green, 145; blue, 228 }  ,opacity=1 ] [align=left] {\mbox{-}1};
\draw (180.43,125.86) node [anchor=north west][inner sep=0.75pt]  [font=\tiny,color={rgb, 255:red, 73; green, 145; blue, 228 }  ,opacity=1 ] [align=left] {\mbox{-}1};
\draw (202,126) node [anchor=north west][inner sep=0.75pt]  [font=\tiny,color={rgb, 255:red, 73; green, 145; blue, 228 }  ,opacity=1 ] [align=left] {\mbox{-}1};
\draw (177,73) node [anchor=north west][inner sep=0.75pt]  [font=\tiny,color={rgb, 255:red, 73; green, 145; blue, 228 }  ,opacity=1 ] [align=left] {1};
\draw (212,74) node [anchor=north west][inner sep=0.75pt]  [font=\tiny,color={rgb, 255:red, 73; green, 145; blue, 228 }  ,opacity=1 ] [align=left] {\mbox{-}1};
\draw (288,137) node [anchor=north west][inner sep=0.75pt]  [font=\footnotesize] [align=left] {1};
\draw (308,138) node [anchor=north west][inner sep=0.75pt]  [font=\footnotesize] [align=left] {2};
\draw (329,138) node [anchor=north west][inner sep=0.75pt]  [font=\footnotesize] [align=left] {3};
\draw (280,59) node [anchor=north west][inner sep=0.75pt]  [font=\footnotesize] [align=left] {1'};
\draw (299,59) node [anchor=north west][inner sep=0.75pt]  [font=\footnotesize] [align=left] {2'};
\draw (317,59) node [anchor=north west][inner sep=0.75pt]  [font=\footnotesize] [align=left] {3'};
\draw (336,59) node [anchor=north west][inner sep=0.75pt]  [font=\footnotesize] [align=left] {4'};
\draw (310,75) node [anchor=north west][inner sep=0.75pt]  [font=\tiny,color={rgb, 255:red, 73; green, 145; blue, 228 }  ,opacity=1 ] [align=left] {\mbox{-}1};
\draw (283,129) node [anchor=north west][inner sep=0.75pt]  [font=\tiny,color={rgb, 255:red, 73; green, 145; blue, 228 }  ,opacity=1 ] [align=left] {\mbox{-}1};
\draw (277,75) node [anchor=north west][inner sep=0.75pt]  [font=\tiny,color={rgb, 255:red, 73; green, 145; blue, 228 }  ,opacity=1 ] [align=left] {1};
\draw (304.43,129.86) node [anchor=north west][inner sep=0.75pt]  [font=\tiny,color={rgb, 255:red, 73; green, 145; blue, 228 }  ,opacity=1 ] [align=left] {1};
\draw (324,130) node [anchor=north west][inner sep=0.75pt]  [font=\tiny,color={rgb, 255:red, 73; green, 145; blue, 228 }  ,opacity=1 ] [align=left] {1};
\draw (294,75) node [anchor=north west][inner sep=0.75pt]  [font=\tiny,color={rgb, 255:red, 73; green, 145; blue, 228 }  ,opacity=1 ] [align=left] {\mbox{-}1};
\draw (332,76) node [anchor=north west][inner sep=0.75pt]  [font=\tiny,color={rgb, 255:red, 73; green, 145; blue, 228 }  ,opacity=1 ] [align=left] {1};
\draw (241,99.4) node [anchor=north west][inner sep=0.75pt]    {$\simeq $};
\draw (408,139) node [anchor=north west][inner sep=0.75pt]  [font=\footnotesize] [align=left] {1};
\draw (428,140) node [anchor=north west][inner sep=0.75pt]  [font=\footnotesize] [align=left] {2};
\draw (449,140) node [anchor=north west][inner sep=0.75pt]  [font=\footnotesize] [align=left] {3};
\draw (399,57) node [anchor=north west][inner sep=0.75pt]  [font=\footnotesize] [align=left] {1'};
\draw (418,57) node [anchor=north west][inner sep=0.75pt]  [font=\footnotesize] [align=left] {2'};
\draw (436,57) node [anchor=north west][inner sep=0.75pt]  [font=\footnotesize] [align=left] {3'};
\draw (455,57) node [anchor=north west][inner sep=0.75pt]  [font=\footnotesize] [align=left] {4'};
\draw (433,74) node [anchor=north west][inner sep=0.75pt]  [font=\tiny,color={rgb, 255:red, 73; green, 145; blue, 228 }  ,opacity=1 ] [align=left] {\mbox{-}1};
\draw (405,130) node [anchor=north west][inner sep=0.75pt]  [font=\tiny,color={rgb, 255:red, 73; green, 145; blue, 228 }  ,opacity=1 ] [align=left] {1};
\draw (394,73) node [anchor=north west][inner sep=0.75pt]  [font=\tiny,color={rgb, 255:red, 73; green, 145; blue, 228 }  ,opacity=1 ] [align=left] {\mbox{-}1};
\draw (424,132) node [anchor=north west][inner sep=0.75pt]  [font=\tiny,color={rgb, 255:red, 73; green, 145; blue, 228 }  ,opacity=1 ] [align=left] {1};
\draw (444,131) node [anchor=north west][inner sep=0.75pt]  [font=\tiny,color={rgb, 255:red, 73; green, 145; blue, 228 }  ,opacity=1 ] [align=left] {\mbox{-}1};
\draw (417,74) node [anchor=north west][inner sep=0.75pt]  [font=\tiny,color={rgb, 255:red, 73; green, 145; blue, 228 }  ,opacity=1 ] [align=left] {\mbox{-}1};
\draw (456,74) node [anchor=north west][inner sep=0.75pt]  [font=\tiny,color={rgb, 255:red, 73; green, 145; blue, 228 }  ,opacity=1 ] [align=left] {1};
\draw (361,99.4) node [anchor=north west][inner sep=0.75pt]    {$\simeq $};

\end{tikzpicture}

\end{center}

Involution, horizontal concatenation, and loops are defined for the $\Z_{2}$-coloured partitions as for the non-coloured partitions. For the vertical concatenation we have to be more careful. Let $(p,z_{1}) \in P_{\Z_{2}}(k,l)$ and $(q,z_{2}) \in  P_{\Z_{2}}(l,m)$. As for the non-coloured partitions we let $p\in P(k,l)$ and $q\in P(l,m)$ with the sets of partitions containing partitions of $\{1,\ldots,k,1',\ldots,l'\}$ and $\{1',\ldots,l',1'',\ldots,m''\}$ respectively. We label the elements $i'\in \{1',\ldots,l'\}$ in the middle row of the stacking $q\star p$ by $z_{1}^{i'}z_{2}^{i'}$. We call $(p,z_{1})$ and $(q,z_{2})$ compatible if for each $i',j' \in  \{1',\ldots,l'\}$ in $q \star p$ that lie in the same connected component of $q$, we have that $z_{1}^{i'}z_{2}^{i'}=z_{1}^{j'}z_{2}^{j'}$. In this case the vertical concatenation $(q,z_{2})(p,z_{1}):= (qp, z_{2}z_{1})$ is given by the vertical concatenation of the underlying non-coloured partitions, which is then labeled by setting $(z_{2}z_{1})^{i}:=z_{1}^{i}$ for $i\in \{1,\ldots, k\}$ and $(z_{2}z_{1})^{j''}:=z_{2}^{j''}g$ for $j''\in \{1'',\ldots,m''\}$, where $g$ is the labeling of a vertex in the middle row of $q\star p$ which is connected to $j''$ in $q$. If there are no such vertices we set $g=1$.

\begin{center}

\tikzset{every picture/.style={line width=0.75pt}} 

\begin{tikzpicture}[x=0.75pt,y=0.75pt,yscale=-1,xscale=1]

\draw  [line width=3] [line join = round][line cap = round] (93.33,62.92) .. controls (93.33,62.92) and (93.33,62.92) .. (93.33,62.92) ;
\draw  [line width=3] [line join = round][line cap = round] (113.33,62.92) .. controls (113.33,62.92) and (113.33,62.92) .. (113.33,62.92) ;
\draw  [line width=3] [line join = round][line cap = round] (130.33,62.92) .. controls (130.33,62.92) and (130.33,62.92) .. (130.33,62.92) ;
\draw  [line width=3] [line join = round][line cap = round] (150.33,62.92) .. controls (150.33,62.92) and (150.33,62.92) .. (150.33,62.92) ;
\draw  [line width=3] [line join = round][line cap = round] (104.33,103.92) .. controls (104.33,103.92) and (104.33,103.92) .. (104.33,103.92) ;
\draw  [line width=3] [line join = round][line cap = round] (121.33,103.92) .. controls (121.33,103.92) and (121.33,103.92) .. (121.33,103.92) ;
\draw  [line width=3] [line join = round][line cap = round] (141.33,103.92) .. controls (141.33,103.92) and (141.33,103.92) .. (141.33,103.92) ;
\draw    (106.43,103.86) .. controls (110.43,86.86) and (94.43,84.86) .. (94.43,62.86) ;
\draw    (94.43,62.86) .. controls (115.43,68.86) and (106.43,80.86) .. (140.43,104.86) ;
\draw    (113.43,61.86) -- (121.43,103.86) ;
\draw    (113.43,61.86) .. controls (115.43,78.86) and (130.43,79.86) .. (130.43,62.86) ;
\draw    (130.43,62.86) .. controls (132.43,79.86) and (149.43,80.86) .. (149.43,63.86) ;
\draw  [line width=3] [line join = round][line cap = round] (103.33,211.92) .. controls (103.33,211.92) and (103.33,211.92) .. (103.33,211.92) ;
\draw  [line width=3] [line join = round][line cap = round] (123.33,211.92) .. controls (123.33,211.92) and (123.33,211.92) .. (123.33,211.92) ;
\draw  [line width=3] [line join = round][line cap = round] (140.33,211.92) .. controls (140.33,211.92) and (140.33,211.92) .. (140.33,211.92) ;
\draw  [line width=3] [line join = round][line cap = round] (160.33,211.92) .. controls (160.33,211.92) and (160.33,211.92) .. (160.33,211.92) ;
\draw  [line width=3] [line join = round][line cap = round] (116.33,167.92) .. controls (116.33,167.92) and (116.33,167.92) .. (116.33,167.92) ;
\draw  [line width=3] [line join = round][line cap = round] (136.33,167.92) .. controls (136.33,167.92) and (136.33,167.92) .. (136.33,167.92) ;
\draw    (136,168.25) .. controls (123,179.25) and (109,192.25) .. (103,212.25) ;
\draw  [line width=3] [line join = round][line cap = round] (165.33,62.92) .. controls (165.33,62.92) and (165.33,62.92) .. (165.33,62.92) ;
\draw  [line width=3] [line join = round][line cap = round] (185.33,62.92) .. controls (185.33,62.92) and (185.33,62.92) .. (185.33,62.92) ;
\draw    (185,62.25) .. controls (182,88.25) and (159,74.25) .. (166,62.25) ;
\draw    (161,211.25) .. controls (159.6,192.15) and (153.65,189.98) .. (148.49,193.51) .. controls (142.58,197.56) and (137.73,209.11) .. (142,211.25) ;
\draw  [line width=3] [line join = round][line cap = round] (174.33,211.92) .. controls (174.33,211.92) and (174.33,211.92) .. (174.33,211.92) ;
\draw  [line width=3] [line join = round][line cap = round] (194.33,211.92) .. controls (194.33,211.92) and (194.33,211.92) .. (194.33,211.92) ;
\draw    (195,211.25) .. controls (193.6,192.15) and (187.65,189.98) .. (182.49,193.51) .. controls (176.58,197.56) and (171.73,209.11) .. (176,211.25) ;
\draw    (116,168.25) -- (124,212.25) ;
\draw  [line width=3] [line join = round][line cap = round] (297.33,134.92) .. controls (297.33,134.92) and (297.33,134.92) .. (297.33,134.92) ;
\draw  [line width=3] [line join = round][line cap = round] (317.33,134.92) .. controls (317.33,134.92) and (317.33,134.92) .. (317.33,134.92) ;
\draw  [line width=3] [line join = round][line cap = round] (334.33,134.92) .. controls (334.33,134.92) and (334.33,134.92) .. (334.33,134.92) ;
\draw  [line width=3] [line join = round][line cap = round] (354.33,134.92) .. controls (354.33,134.92) and (354.33,134.92) .. (354.33,134.92) ;
\draw  [line width=3] [line join = round][line cap = round] (310.33,90.92) .. controls (310.33,90.92) and (310.33,90.92) .. (310.33,90.92) ;
\draw  [line width=3] [line join = round][line cap = round] (330.33,90.92) .. controls (330.33,90.92) and (330.33,90.92) .. (330.33,90.92) ;
\draw    (330,91.25) .. controls (317,102.25) and (303,115.25) .. (297,135.25) ;
\draw    (355,134.25) .. controls (353.6,115.15) and (347.65,112.98) .. (342.49,116.51) .. controls (336.58,120.56) and (331.73,132.11) .. (336,134.25) ;
\draw  [line width=3] [line join = round][line cap = round] (368.33,134.92) .. controls (368.33,134.92) and (368.33,134.92) .. (368.33,134.92) ;
\draw  [line width=3] [line join = round][line cap = round] (388.33,134.92) .. controls (388.33,134.92) and (388.33,134.92) .. (388.33,134.92) ;
\draw    (389,134.25) .. controls (387.6,115.15) and (381.65,112.98) .. (376.49,116.51) .. controls (370.58,120.56) and (365.73,132.11) .. (370,134.25) ;
\draw    (310,91.25) -- (318,135.25) ;
\draw  [line width=3] [line join = round][line cap = round] (297.33,134.92) .. controls (297.33,134.92) and (297.33,134.92) .. (297.33,134.92) ;
\draw  [line width=3] [line join = round][line cap = round] (317.33,134.92) .. controls (317.33,134.92) and (317.33,134.92) .. (317.33,134.92) ;
\draw  [line width=3] [line join = round][line cap = round] (334.33,134.92) .. controls (334.33,134.92) and (334.33,134.92) .. (334.33,134.92) ;
\draw  [line width=3] [line join = round][line cap = round] (354.33,134.92) .. controls (354.33,134.92) and (354.33,134.92) .. (354.33,134.92) ;
\draw  [line width=3] [line join = round][line cap = round] (308.33,175.92) .. controls (308.33,175.92) and (308.33,175.92) .. (308.33,175.92) ;
\draw  [line width=3] [line join = round][line cap = round] (325.33,175.92) .. controls (325.33,175.92) and (325.33,175.92) .. (325.33,175.92) ;
\draw  [line width=3] [line join = round][line cap = round] (345.33,175.92) .. controls (345.33,175.92) and (345.33,175.92) .. (345.33,175.92) ;
\draw    (310.43,175.86) .. controls (314.43,158.86) and (298.43,156.86) .. (298.43,134.86) ;
\draw    (298.43,134.86) .. controls (319.43,140.86) and (310.43,152.86) .. (344.43,176.86) ;
\draw    (317.43,133.86) -- (325.43,175.86) ;
\draw    (317.43,133.86) .. controls (319.43,150.86) and (334.43,151.86) .. (334.43,134.86) ;
\draw    (334.43,134.86) .. controls (336.43,151.86) and (353.43,152.86) .. (353.43,135.86) ;
\draw  [line width=3] [line join = round][line cap = round] (369.33,134.92) .. controls (369.33,134.92) and (369.33,134.92) .. (369.33,134.92) ;
\draw  [line width=3] [line join = round][line cap = round] (389.33,134.92) .. controls (389.33,134.92) and (389.33,134.92) .. (389.33,134.92) ;
\draw    (389,134.25) .. controls (386,160.25) and (363,146.25) .. (370,134.25) ;
\draw  [line width=3] [line join = round][line cap = round] (504.33,154.92) .. controls (504.33,154.92) and (504.33,154.92) .. (504.33,154.92) ;
\draw  [line width=3] [line join = round][line cap = round] (521.33,154.92) .. controls (521.33,154.92) and (521.33,154.92) .. (521.33,154.92) ;
\draw  [line width=3] [line join = round][line cap = round] (541.33,154.92) .. controls (541.33,154.92) and (541.33,154.92) .. (541.33,154.92) ;
\draw  [line width=3] [line join = round][line cap = round] (507.33,112.92) .. controls (507.33,112.92) and (507.33,112.92) .. (507.33,112.92) ;
\draw  [line width=3] [line join = round][line cap = round] (527.33,112.92) .. controls (527.33,112.92) and (527.33,112.92) .. (527.33,112.92) ;
\draw    (508,113.75) -- (521,154.75) ;
\draw    (505,154.75) .. controls (509,119.75) and (529,135.75) .. (528,112.75) ;
\draw    (528,112.75) .. controls (554,130.75) and (529,136.75) .. (541,154.75) ;

\draw (98,114) node [anchor=north west][inner sep=0.75pt]  [font=\footnotesize] [align=left] {1};
\draw (117,114) node [anchor=north west][inner sep=0.75pt]  [font=\footnotesize] [align=left] {2};
\draw (138,114) node [anchor=north west][inner sep=0.75pt]  [font=\footnotesize] [align=left] {3};
\draw (88,26) node [anchor=north west][inner sep=0.75pt]  [font=\footnotesize] [align=left] {1'};
\draw (107,26) node [anchor=north west][inner sep=0.75pt]  [font=\footnotesize] [align=left] {2'};
\draw (125,26) node [anchor=north west][inner sep=0.75pt]  [font=\footnotesize] [align=left] {3'};
\draw (144,26) node [anchor=north west][inner sep=0.75pt]  [font=\footnotesize] [align=left] {4'};
\draw (161,263) node [anchor=north west][inner sep=0.75pt]  [font=\scriptsize] [align=left] {Example: Vertical Concatenation of compatible diagrams with $\displaystyle l( p,q) =1.$};
\draw (31,77) node [anchor=north west][inner sep=0.75pt]   [align=left] {$\displaystyle ( p,z_{\{1\}})$=};
\draw (96,228) node [anchor=north west][inner sep=0.75pt]  [font=\footnotesize] [align=left] {1'};
\draw (115,228) node [anchor=north west][inner sep=0.75pt]  [font=\footnotesize] [align=left] {2'};
\draw (133,228) node [anchor=north west][inner sep=0.75pt]  [font=\footnotesize] [align=left] {3'};
\draw (153,227) node [anchor=north west][inner sep=0.75pt]  [font=\footnotesize] [align=left] {4'};
\draw (111,146) node [anchor=north west][inner sep=0.75pt]  [font=\footnotesize] [align=left] {1''};
\draw (130,146) node [anchor=north west][inner sep=0.75pt]  [font=\footnotesize] [align=left] {2''};
\draw (161,26) node [anchor=north west][inner sep=0.75pt]  [font=\footnotesize] [align=left] {5'};
\draw (180,26) node [anchor=north west][inner sep=0.75pt]  [font=\footnotesize] [align=left] {6'};
\draw (169,227) node [anchor=north west][inner sep=0.75pt]  [font=\footnotesize] [align=left] {5'};
\draw (189,227) node [anchor=north west][inner sep=0.75pt]  [font=\footnotesize] [align=left] {6'};
\draw (305,61) node [anchor=north west][inner sep=0.75pt]  [font=\footnotesize] [align=left] {1''};
\draw (324,61) node [anchor=north west][inner sep=0.75pt]  [font=\footnotesize] [align=left] {2''};
\draw (301,189) node [anchor=north west][inner sep=0.75pt]  [font=\footnotesize] [align=left] {1};
\draw (321,190) node [anchor=north west][inner sep=0.75pt]  [font=\footnotesize] [align=left] {2};
\draw (342,190) node [anchor=north west][inner sep=0.75pt]  [font=\footnotesize] [align=left] {3};
\draw (171,126) node [anchor=north west][inner sep=0.75pt]  [font=\footnotesize] [align=left] {$\displaystyle ( q,z_{\{2\}}) *( p,z_{\{1\}})$=};
\draw (496,168) node [anchor=north west][inner sep=0.75pt]  [font=\footnotesize] [align=left] {1};
\draw (516,169) node [anchor=north west][inner sep=0.75pt]  [font=\footnotesize] [align=left] {2};
\draw (537,169) node [anchor=north west][inner sep=0.75pt]  [font=\footnotesize] [align=left] {3};
\draw (502,71) node [anchor=north west][inner sep=0.75pt]  [font=\footnotesize] [align=left] {1''};
\draw (521,71) node [anchor=north west][inner sep=0.75pt]  [font=\footnotesize] [align=left] {2''};
\draw (90,106) node [anchor=north west][inner sep=0.75pt]  [font=\tiny,color={rgb, 255:red, 73; green, 145; blue, 228 }  ,opacity=1 ] [align=left] {\mbox{-}1};
\draw (108.43,106.86) node [anchor=north west][inner sep=0.75pt]  [font=\tiny,color={rgb, 255:red, 73; green, 145; blue, 228 }  ,opacity=1 ] [align=left] { 1};
\draw (129.43,106.86) node [anchor=north west][inner sep=0.75pt]  [font=\tiny,color={rgb, 255:red, 73; green, 145; blue, 228 }  ,opacity=1 ] [align=left] {\mbox{-}1};
\draw (82,53) node [anchor=north west][inner sep=0.75pt]  [font=\tiny,color={rgb, 255:red, 73; green, 145; blue, 228 }  ,opacity=1 ] [align=left] {\mbox{-}1};
\draw (105,52) node [anchor=north west][inner sep=0.75pt]  [font=\tiny,color={rgb, 255:red, 73; green, 145; blue, 228 }  ,opacity=1 ] [align=left] { 1};
\draw (123,52) node [anchor=north west][inner sep=0.75pt]  [font=\tiny,color={rgb, 255:red, 73; green, 145; blue, 228 }  ,opacity=1 ] [align=left] {\mbox{-}1};
\draw (143,53) node [anchor=north west][inner sep=0.75pt]  [font=\tiny,color={rgb, 255:red, 73; green, 145; blue, 228 }  ,opacity=1 ] [align=left] {1};
\draw (160,53) node [anchor=north west][inner sep=0.75pt]  [font=\tiny,color={rgb, 255:red, 73; green, 145; blue, 228 }  ,opacity=1 ] [align=left] { 1};
\draw (179,53) node [anchor=north west][inner sep=0.75pt]  [font=\tiny,color={rgb, 255:red, 73; green, 145; blue, 228 }  ,opacity=1 ] [align=left] {\mbox{-}1};
\draw (108,159) node [anchor=north west][inner sep=0.75pt]  [font=\tiny,color={rgb, 255:red, 73; green, 145; blue, 228 }  ,opacity=1 ] [align=left] {\mbox{-}1};
\draw (135,158) node [anchor=north west][inner sep=0.75pt]  [font=\tiny,color={rgb, 255:red, 73; green, 145; blue, 228 }  ,opacity=1 ] [align=left] {1};
\draw (96,219) node [anchor=north west][inner sep=0.75pt]  [font=\tiny,color={rgb, 255:red, 73; green, 145; blue, 228 }  ,opacity=1 ] [align=left] {\mbox{-}1};
\draw (117,219) node [anchor=north west][inner sep=0.75pt]  [font=\tiny,color={rgb, 255:red, 73; green, 145; blue, 228 }  ,opacity=1 ] [align=left] {\mbox{-}1};
\draw (136,218) node [anchor=north west][inner sep=0.75pt]  [font=\tiny,color={rgb, 255:red, 73; green, 145; blue, 228 }  ,opacity=1 ] [align=left] { 1};
\draw (155,218) node [anchor=north west][inner sep=0.75pt]  [font=\tiny,color={rgb, 255:red, 73; green, 145; blue, 228 }  ,opacity=1 ] [align=left] {\mbox{-}1};
\draw (170,218) node [anchor=north west][inner sep=0.75pt]  [font=\tiny,color={rgb, 255:red, 73; green, 145; blue, 228 }  ,opacity=1 ] [align=left] {1};
\draw (189,218) node [anchor=north west][inner sep=0.75pt]  [font=\tiny,color={rgb, 255:red, 73; green, 145; blue, 228 }  ,opacity=1 ] [align=left] { \mbox{-}1};
\draw (285,126) node [anchor=north west][inner sep=0.75pt]  [font=\tiny,color={rgb, 255:red, 73; green, 145; blue, 228 }  ,opacity=1 ] [align=left] {1};
\draw (305,125) node [anchor=north west][inner sep=0.75pt]  [font=\tiny,color={rgb, 255:red, 73; green, 145; blue, 228 }  ,opacity=1 ] [align=left] {\mbox{-}1};
\draw (323,125) node [anchor=north west][inner sep=0.75pt]  [font=\tiny,color={rgb, 255:red, 73; green, 145; blue, 228 }  ,opacity=1 ] [align=left] {\mbox{-}1};
\draw (343,125) node [anchor=north west][inner sep=0.75pt]  [font=\tiny,color={rgb, 255:red, 73; green, 145; blue, 228 }  ,opacity=1 ] [align=left] {\mbox{-}1};
\draw (370,126) node [anchor=north west][inner sep=0.75pt]  [font=\tiny,color={rgb, 255:red, 73; green, 145; blue, 228 }  ,opacity=1 ] [align=left] {1};
\draw (390,126) node [anchor=north west][inner sep=0.75pt]  [font=\tiny,color={rgb, 255:red, 73; green, 145; blue, 228 }  ,opacity=1 ] [align=left] {1};
\draw (493,158) node [anchor=north west][inner sep=0.75pt]  [font=\tiny,color={rgb, 255:red, 73; green, 145; blue, 228 }  ,opacity=1 ] [align=left] {\mbox{-}1};
\draw (511.43,158.86) node [anchor=north west][inner sep=0.75pt]  [font=\tiny,color={rgb, 255:red, 73; green, 145; blue, 228 }  ,opacity=1 ] [align=left] { 1};
\draw (532.43,158.86) node [anchor=north west][inner sep=0.75pt]  [font=\tiny,color={rgb, 255:red, 73; green, 145; blue, 228 }  ,opacity=1 ] [align=left] {\mbox{-}1};
\draw (524.43,99.86) node [anchor=north west][inner sep=0.75pt]  [font=\tiny,color={rgb, 255:red, 73; green, 145; blue, 228 }  ,opacity=1 ] [align=left] { 1};
\draw (499.43,100.86) node [anchor=north west][inner sep=0.75pt]  [font=\tiny,color={rgb, 255:red, 73; green, 145; blue, 228 }  ,opacity=1 ] [align=left] { 1};
\draw (38,184) node [anchor=north west][inner sep=0.75pt]   [align=left] {$\displaystyle ( q,z_{\{2\}})$=};
\draw (405,128) node [anchor=north west][inner sep=0.75pt]  [font=\footnotesize] [align=left] {$\displaystyle ( q,z_{\{2\}})( p,z_{\{1\}})$=};
\end{tikzpicture}
\end{center}
\end{defi}

\begin{prop} \label{compatibilityproposition}
    Let $(p,z_{1}),(p,z_{3})\in P_{\Z_{2}}(k,l)$ and $(q,z_{2}),(q,z_{4})\in P_{\Z_{2}}(l,m)$, so that $((q,z_{2}),(p,z_{1}))$ and $((q,z_{4}),(p,z_{3}))$ are compatible pairs. Let $(p,z_{1})\simeq (p,z_{3})$ and $(q,z_{2})\simeq(q,z_{4})$, then $(q,z_{2})(p,z_{1})\simeq (q,z_{4})(p,z_{3})$. So equivalences of $\Z_{2}$-coloured partitions are respected by vertical concatenation.
\end{prop}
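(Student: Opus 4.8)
The plan is to translate both equivalences into explicit sign data and then build, directly from that data, the sign data witnessing $(q,z_{2})(p,z_{1})\simeq(q,z_{4})(p,z_{3})$.

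Write $B(x)$ for the block of $p$ through a vertex $x$ of $p$, and $C(y)$ for the block of $q$ through a vertex $y$ of $q$. By definition of $\simeq$, the relation $(p,z_{1})\simeq(p,z_{3})$ means there are signs $\epsilon_{B}\in\{-1,1\}$, one per block $B$ of $p$, with $z_{3}^{x}=\epsilon_{B(x)}z_{1}^{x}$ for all vertices $x$ of $p$; likewise $(q,z_{2})\simeq(q,z_{4})$ gives signs $\delta_{C}\in\{-1,1\}$, one per block $C$ of $q$, with $z_{4}^{y}=\delta_{C(y)}z_{2}^{y}$ for all vertices $y$ of $q$. Since the coloring $z_{2}z_{1}$ only labels the $k$ bottom and $m$ top vertices of $qp$ (loops carry no label and are irrelevant here), it suffices to produce signs $\eta_{D}$, one per block $D$ of $qp$, with $(z_{4}z_{3})^{x}=\eta_{D(x)}(z_{2}z_{1})^{x}$ for every bottom vertex $x\in\{1,\dots,k\}$ and every top vertex $x\in\{1'',\dots,m''\}$.

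The key point --- and the only place the two compatibility hypotheses are used --- is this: if a block $C$ of $q$ contains a middle vertex, then $\epsilon_{B(i')}$ is the same for every middle vertex $i'\in C$. Indeed, compatibility of $((q,z_{2}),(p,z_{1}))$ makes $z_{1}^{i'}z_{2}^{i'}$ constant over the middle vertices $i'$ of $C$, compatibility of $((q,z_{4}),(p,z_{3}))$ makes $z_{3}^{i'}z_{4}^{i'}$ constant over them, and $z_{3}^{i'}z_{4}^{i'}=\epsilon_{B(i')}\delta_{C}\,(z_{1}^{i'}z_{2}^{i'})$; dividing shows $\epsilon_{B(i')}$ is forced to be constant on $C$. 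Now a block $D$ of $qp$ that meets the bottom or top row is obtained by gluing a connected family of blocks of $p$ and of $q$ along shared middle vertices; if two blocks $B,B'$ of $p$ are each glued to a common block $C$ of $q$, then $\epsilon_{B}=\epsilon_{B'}$ by the previous sentence, so by connectedness all blocks of $p$ appearing in $D$ share one common value of $\epsilon$. I set $\eta_{D}$ equal to this common value when $D$ involves a block of $p$, and $\eta_{D}:=\delta_{C}$ when $D$ consists only of top vertices coming from a block $C$ of $q$ that has no middle vertex.

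It remains to check the defining relation. For a bottom vertex $i$ one has $(z_{2}z_{1})^{i}=z_{1}^{i}$ and $(z_{4}z_{3})^{i}=z_{3}^{i}=\epsilon_{B(i)}z_{1}^{i}=\eta_{D(i)}(z_{2}z_{1})^{i}$, since $B(i)$ is a block of $p$ lying in $D(i)$. For a top vertex $j''$ whose block $D$ involves a block of $p$, connectivity forces the block $C$ of $q$ through $j''$ to carry a middle vertex $i'$; then $(z_{2}z_{1})^{j''}=z_{2}^{j''}\,z_{1}^{i'}z_{2}^{i'}$ while $(z_{4}z_{3})^{j''}=z_{4}^{j''}\,z_{3}^{i'}z_{4}^{i'}=\delta_{C}z_{2}^{j''}\cdot\epsilon_{B(i')}\delta_{C}\,z_{1}^{i'}z_{2}^{i'}=\epsilon_{B(i')}(z_{2}z_{1})^{j''}=\eta_{D}(z_{2}z_{1})^{j''}$, the last step because $B(i')\subseteq D$; the choice of $i'$ is immaterial, exactly as in the well-definedness of vertical concatenation. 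For a top vertex $j''$ whose block $C$ of $q$ has no middle vertex, $(z_{2}z_{1})^{j''}=z_{2}^{j''}$ and $(z_{4}z_{3})^{j''}=z_{4}^{j''}=\delta_{C}z_{2}^{j''}=\eta_{D}(z_{2}z_{1})^{j''}$. This produces the required signs, hence $(q,z_{2})(p,z_{1})\simeq(q,z_{4})(p,z_{3})$. The only nonroutine step is the ``constant on $C$'' claim: without it the signs $\epsilon_{B}$ occurring inside one block of $qp$ could disagree and no $\eta_{D}$ would exist; everything else is a straightforward case check over the possible shapes of a block of $qp$ (mixed, all-bottom, all-top, loop).
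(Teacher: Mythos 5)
Your proof is correct. The paper states this proposition without giving a proof (it is treated as elementary), so there is no argument to compare against; your sign-bookkeeping is the natural one, and you correctly isolate the one nontrivial point, namely that compatibility of \emph{both} pairs forces the block-signs $\epsilon_{B(i')}$ coming from $(p,z_1)\simeq(p,z_3)$ to be constant over the middle vertices of each block of $q$, whence by connectedness they are constant over each block of $qp$ and yield the required block-signs $\eta_D$; the case check (mixed, all-bottom, all-top via a $q$-block without middle vertices, loops) is complete.
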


\begin{defi}
    Let $(p,z_{1})\in P_{\Z_{2}}(k,l)$ and $(q,z_{2})\in P_{\Z_{2}}(l,m)$. We call a pair of equivalence classes $([(q,z_{2})],[(p,z_{1})])$ compatible if there exist representatives which are compatible. We define 
    \begin{align*}
        [(q,z_{2})][(p,z_{1})]:=[(q,z_{2})(p,z_{1})].
    \end{align*}
\end{defi}

\begin{rem} \label{identify equivalence classes colored partitions}
    From now on we will consider equivalent $\Z_{2}$-coloured partitions to be equal.
\end{rem}

\begin{rem} \label{Special form of coloured partitions}
    We can write every $\Z_{2}$-coloured partition diagram $(p,z)$ in the following form 
    \begin{equation*} 
    (p,z) =
    \begin{tikzpicture}[centerzero]
        \pd{-0.4,-0.2};
        \pd{0.4,-0.2};
        \pd{0.8,-0.2};
        \pd{-0.4,0.2} node[anchor=south] {\dotlabel{z_{1'}}};
        \pd{0.4,0.2} node[anchor=south] {\dotlabel{z_{l-1'}}};
        \pd{0.8,0.2} node[anchor=south] {\dotlabel{z_{l'}}};
        \draw (-0.4,-0.2) -- (-0.4,0.2);
        \draw (0.4,-0.2) -- (0.4,0.2);
        \draw (0.8,-0.2) -- (0.8,0.2);
        \node at (0.04,0) {$\cdots$};
    \end{tikzpicture}
    \circ
    p
    \circ
    \begin{tikzpicture}[centerzero]
        \pd{-0.4,-0.2} node[anchor=north] {\dotlabel{z_1}};
        \pd{0.4,-0.2} node[anchor=north] {\dotlabel{z_{k-1}}};
        \pd{0.8,-0.2} node[anchor=north] {\dotlabel{z_k}};
        \pd{-0.4,0.2};
        \pd{0.4,0.2};
        \pd{0.8,0.2};
        \draw (-0.4,-0.2) -- (-0.4,0.2);
        \draw (0.4,-0.2) -- (0.4,0.2);
        \draw (0.8,-0.2) -- (0.8,0.2);
        \node at (0.04,0) {$\cdots$};
    \end{tikzpicture}
    \ .
\end{equation*}
This will be useful in reducing problems involving $\Z_{2}$-coloured partitions to problems containing only non-coloured partitions.
\end{rem}

\begin{defi} \label{definition normal form Z2 coloured}
     Let $(p,z)\in P_{\Z_{2}}(k,l)$. We extend the permutation partitions from Definition \ref{definition permutation partitions} to $\Z_{2}$-coloured partitions by $\phi:S_{n} \xhookrightarrow{} P(n,n) \xhookrightarrow{} P_{\Z_{2}}(n,n)$. If $p'$ is some non-crossing form of $p$ such that $p=\phi(\sigma) \circ p' \circ \phi(\rho)$, then we call
    \begin{equation*} 
    (p,z) =
    \begin{tikzpicture}[centerzero]
        \pd{-0.4,-0.2};
        \pd{0.4,-0.2};
        \pd{0.8,-0.2};
        \pd{-0.4,0.2} node[anchor=south] {\dotlabel{z_{1'}}};
        \pd{0.4,0.2} node[anchor=south] {\dotlabel{z_{l-1'}}};
        \pd{0.8,0.2} node[anchor=south] {\dotlabel{z_{l'}}};
        \draw (-0.4,-0.2) -- (-0.4,0.2);
        \draw (0.4,-0.2) -- (0.4,0.2);
        \draw (0.8,-0.2) -- (0.8,0.2);
        \node at (0.04,0) {$\cdots$};
    \end{tikzpicture}
    \circ
    \phi(\sigma) \circ p' \circ \phi(\rho)
    \circ
    \begin{tikzpicture}[centerzero]
        \pd{-0.4,-0.2} node[anchor=north] {\dotlabel{z_1}};
        \pd{0.4,-0.2} node[anchor=north] {\dotlabel{z_{k-1}}};
        \pd{0.8,-0.2} node[anchor=north] {\dotlabel{z_k}};
        \pd{-0.4,0.2};
        \pd{0.4,0.2};
        \pd{0.8,0.2};
        \draw (-0.4,-0.2) -- (-0.4,0.2);
        \draw (0.4,-0.2) -- (0.4,0.2);
        \draw (0.8,-0.2) -- (0.8,0.2);
        \node at (0.04,0) {$\cdots$};
    \end{tikzpicture}
    \
\end{equation*}
     a normal form of $(p,z)$.
\end{defi}

\subsection{The permutation category of Knop and Likeng-Savage}

We use the definition of $\text{Par}(\Z_{2},t)^{Kar}$ that is given in \cite[Definition 3.6]{Nyobe_Likeng_2021}.

\begin{defi}
    Define the category $\text{Par}(\Z_{2},t)$ to be the category with objects $\{[\tilde{n}]|n\in \N\}$ and morphism spaces $\C$-linear combinations of equivalence classes of $\Z_{2}$-coloured partition diagrams, so
    \begin{align*}
        \Hom_{\text{Par}(\Z_{2},t)}([\tilde{k}],[\tilde{l}])=\C P_{\Z_{2}}(k,l),
    \end{align*}
     where we identify equivalence classes of $Z_{2}$-coloured partitions, according to Remark \ref{identify equivalence classes colored partitions}. The identity of an object $[\tilde{n}]$ is given by
    \begin{align*}
        id_{[\tilde{n}]}:=(\{\{1,1'\},\ldots,\{n,n'\}\},(1,\ldots,1))
    \end{align*} 
    and the composition is given by extending the rule
    \begin{align*}
        &\circ: \C P_{\Z_{2}}(l,m) \times \C P_{\Z_{2}}(k,l) \to \C P_{\Z_{2}}(k,m)\\
        &(q,p) \mapsto q\circ p := t^{l(q,p)}qp.
    \end{align*}
    $\C$-bilinearly. The category has a tensor product, which is given on objects by $[\tilde{n}]\otimes[\tilde{m}]:=[\tilde{n+m}]$ and on morphisms by a $\C$-bilinear extension of the horizontal concatenation of $\Z_{2}$-coloured partitions. This then extends to the additive Karoubi envelope.
\end{defi}

\begin{prop} \label{Par is spherical}
    $\text{Par}(\Z_{2},t)$ is a strict $\C$-linear spherical rigid symmetric monoidal category.
\end{prop}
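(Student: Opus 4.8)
Since the underlying data --- objects, morphism spaces, composition and tensor product --- is exactly that of \cite[Definition 3.6]{Nyobe_Likeng_2021}, the plan is to check the axioms one at a time, reducing almost everything to standard diagrammatic identities for uncoloured partitions together with the colour bookkeeping recorded in Proposition \ref{compatibilityproposition} and Remark \ref{identify equivalence classes colored partitions}. First one confirms that $\text{Par}(\Z_{2},t)$ is a $\C$-linear category: composition is $\C$-bilinear by construction and descends to equivalence classes of $\Z_{2}$-coloured partitions by Proposition \ref{compatibilityproposition}; the morphism $\id_{[\tilde{n}]}$ is a two-sided unit because stacking a trivial partition produces no circles; and associativity of composition reduces to associativity of the vertical concatenation of partitions together with the additivity of loop counts
\begin{equation*}
    l(q,p) + l(r,qp) = l(r,q) + l(rq,p),
\end{equation*}
which holds because both sides count the circles produced when stacking the three diagrams $r$, $q$, $p$, independently of the order in which the two compositions are carried out; hence $(r\circ q)\circ p$ and $r\circ(q\circ p)$ acquire the same power of $t$.

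Next, the monoidal structure is strict: on objects $\otimes$ is addition on $\N$, so it is strictly associative and strictly unital with unit $[\tilde{0}]$, and on morphisms horizontal concatenation of $\Z_{2}$-coloured partitions is strictly associative and unital, while the interchange law holds because horizontal and vertical concatenation commute and no circle is created between the two tensor factors, so the $t$-powers add correctly; thus all associators and unitors may be taken to be identities. For the symmetry one takes $S_{[\tilde{n}],[\tilde{m}]}\colon [\tilde{n+m}] \to [\tilde{m+n}]$ to be the trivially coloured permutation partition $\phi(w)$, where $w\in S_{n+m}$ is the block transposition interchanging the first $n$ and the last $m$ points. Naturality of $S$ in both variables is the familiar fact that a permutation partition slides past any diagram, the colours being transported bijectively so that the compatibility condition is automatic; the hexagon axioms and the relation $S_{[\tilde{m}],[\tilde{n}]}\circ S_{[\tilde{n}],[\tilde{m}]} = \id$ follow because $\phi$ is a monoid homomorphism and composing permutation partitions never creates a circle.

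For rigidity, $[\tilde{1}]$ is self-dual: one takes $coev_{[\tilde{1}]}\colon [\tilde{0}]\to[\tilde{2}]$ and $ev_{[\tilde{1}]}\colon [\tilde{2}]\to[\tilde{0}]$ to be the trivially coloured cup $\{\{1',2'\}\}$ and cap $\{\{1,2\}\}$, the dual of a morphism being its involution $p\mapsto p^{*}$, with the evident identity choices for the isomorphisms $\tau$, $\nu$ and $\gamma$; the zig-zag identities and the naturality squares relating $f$ and $f^{*}$ are the usual straightenings of cup--cap diagrams, in which no circle appears and hence no power of $t$ is introduced. Tensoring these yields duality data for every $[\tilde{n}]$, with $[\tilde{n}]^{*}=[\tilde{n}]$. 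Finally, for sphericality one observes that, for $p\in\End_{\text{Par}(\Z_{2},t)}([\tilde{n}])$, both $Trace_{l}(p)$ and $Trace_{r}(p)$ are computed by closing the diagram of $p$ up --- on the right, respectively on the left --- into the same disjoint union of (possibly coloured) circles, namely the blocks of the partition obtained by joining the top vertex $i'$ to the bottom vertex $i$ for every $i$; the two closed diagrams thus agree, so $Trace_{l}(p)=Trace_{r}(p)$, and $\C$-bilinearity extends this equality to all of $\End_{\text{Par}(\Z_{2},t)}([\tilde{n}])$.

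The main obstacle I anticipate is purely organisational: one must ensure at every step that the $\Z_{2}$-colours neither create nor destroy circles --- so that $l(q,p)$ depends only on the underlying uncoloured partitions --- and that they are transported functorially through composition, tensor product, the symmetry and the (co)evaluations. This is precisely what Proposition \ref{compatibilityproposition} is designed to provide, and once each verification is phrased on representatives with trivial colouring for the structure morphisms no genuine difficulty remains; the one computational point worth isolating is the loop-count additivity displayed above, on which both associativity and the interchange law rest.
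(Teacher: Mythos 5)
The paper offers no proof of this proposition at all: it is stated as known, the category $\text{Par}(\Z_{2},t)$ being taken verbatim from \cite[Definition 3.6]{Nyobe_Likeng_2021}, where the monoidal, rigid and spherical structure is established. So there is nothing in the text to compare your argument against; what you have written is the standard verification, and it is correct. The loop-count additivity $l(q,p)+l(r,qp)=l(r,q)+l(rq,p)$ (both sides counting the components of the three-level stacking contained in the two middle rows) is indeed the only computation on which associativity and the interchange law rest, the block-transposition permutation partitions do give the symmetry, the trivially coloured cup and cap give self-duality of $[\tilde{1}]$ with circle-free zig-zags, and the left and right closures of an endomorphism diagram coincide as coloured set partitions, which is all sphericality requires.

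Two points deserve slightly more care than your sketch gives them, both specific to the coloured setting. First, the composition of an \emph{incompatible} pair of coloured diagrams is declared to be zero, so associativity has a degenerate case to check: one must see that a triple composite vanishes for one bracketing if and only if it vanishes for the other, and that when both are nonzero the propagated colourings agree up to the equivalence of Remark \ref{identify equivalence classes colored partitions}; Proposition \ref{compatibilityproposition} only guarantees that composition descends to equivalence classes, not this associativity of colour propagation, so that step needs its own (routine but nonempty) argument. Second, your parenthetical ``the $\Z_{2}$-colours neither create nor destroy circles'' is true of the \emph{count} $l(q,p)$, but a closed loop carrying a nontrivial total colour evaluates to $0$ rather than $t$ (e.g.\ the trace of the $(-1)$-token strand is $0$); this does not affect your sphericality argument, which only uses that the left and right closures are the \emph{same} coloured diagram, but it would be wrong to read the closure as $t^{\#\text{loops}}$ in general.
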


\begin{defi}
    We define the Deligne category for the hyperoctahedral groups in the permutation representation for $t\in \C$ by $ \text{Par}(\Z_{2},t)^{Kar}$ and denote the corresponding $\C$-linear full embedding by $\iota_{H}:\text{Par}(\Z_{2},t) \to \text{Par}(\Z_{2},t)^{Kar}$. Proposition \ref{Par is spherical} and Proposition \ref{Karoubian envelope spherical} imply that $\text{Par}(\Z_{2},t)^{Kar}$ is a $\C$-linear spherical rigid symmetric monoidal category.
\end{defi}

\subsection{The reflection category of Flake and Maassen}

The definition of $\underline{\text{Rep}}(H_{t})$ we use can be found in \cite[Definition 2.5]{Flake_2021}. 

\begin{defi}
    Define the category $\underline{\text{Rep}_{0}}(H_{t})$ to be the category with objects $\{[n]|n\in \N\}$ and morphism spaces $\C$-linear combinations of even partition diagrams, so
    \begin{align*}
        \Hom_{\underline{\text{Rep}_{0}}(H_{t})}([k],[l])=\C P_{even}(k,l).
    \end{align*}
      The identity of an object $[n]$ is given by
    \begin{align*}
        id_{[n]}:=\{\{1,1'\},\ldots,\{n,n'\}\}
    \end{align*} 
    and the composition by extending
    \begin{align*}
        &\circ: \C P_{even}(l,m) \times \C P_{even}(k,l) \to \C P_{even}(k,m)\\
        &(q,p) \mapsto q\circ p := t^{l(q,p)}qp
    \end{align*}
    $\C$-bilinearly. The category has a tensor product, which is given on objects by $[n]\otimes[m]:=[n+m]$ and on morphisms by a $\C$-bilinear extension of the horizontal concatenation of partitions.
\end{defi}

\begin{prop} \label{RepH0 spherical}
    $\underline{\text{Rep}_{0}}(H_{t})$ is strict $\C$-linear spherical rigid symmetric monoidal category.
\end{prop}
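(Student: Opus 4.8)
The plan is to identify $\underline{\text{Rep}_{0}}(H_{t})$ with the (non-full) monoidal subcategory of Deligne's partition category $\underline{\text{Rep}}(S_{t})$ on the same objects, keeping only the $\C$-spans of \emph{even} partition diagrams as morphisms, and then to transport the spherical rigid symmetric monoidal structure of $\underline{\text{Rep}}(S_{t})$ (\cite{comes2011delignes}, \cite{Deligne}) along this inclusion. To begin with, the bare strict $\C$-linear monoidal structure is forced by the definition: the objects $\{[n]\}_{n\in\N}$ form the free monoid on $[1]$ with unit $[0]$, composition and the horizontal-concatenation tensor product are $\C$-bilinear, and the associativity and unit constraints are literal identities because juxtaposition of partition diagrams is strictly associative and unital; moreover $\End([0]) = \C P_{even}(0,0) = \C$ since the empty set has a unique partition. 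That this even subcategory is closed under composition (including the scalar loop factors $t^{l(q,p)}$) and under $\otimes$ is exactly Proposition \ref{peven closed under operations}.

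Next I would observe that every piece of the symmetric rigid spherical structure of $\underline{\text{Rep}}(S_{t})$ is carried by even partitions. The symmetric braiding $S_{[k],[l]}$ is the permutation partition $\phi(\sigma)$ sliding the first $k$ points past the last $l$, assembled from the elementary crossing $\CROSS$; the duality is $[n]^{*}=[n]$ on objects; and $ev_{[n]}$, $coev_{[n]}$ are the nested cups and caps built from $\CAP$. Since $\CROSS$ and $\CAP$ are even and $P_{even}$ is closed under involution and the two concatenations, all these morphisms live in $\underline{\text{Rep}_{0}}(H_{t})$, and the axioms they must satisfy --- naturality of $S$, the two hexagons, $S_{[l],[k]}\circ S_{[k],[l]}=\id$, the zig-zag identities, and $Trace_{l}=Trace_{r}$ --- are identities between fixed $\C$-linear combinations of partition diagrams; since these hold in $\underline{\text{Rep}}(S_{t})$, they hold verbatim in the subcategory. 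The one point worth spelling out is sphericity: for $f\in\End([n])$ the scalars $Trace_{l}(f)$ and $Trace_{r}(f)$ are obtained by closing the diagram of $f$ to the left, respectively to the right, into a disjoint union of circles, and the resulting numbers of circles agree because one closure is the left--right mirror image of the other; hence $\tr$ is well-defined and $\dim([1])=\tr(\id_{[1]})=t$.

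A self-contained alternative, parallel to the treatment of $\text{Par}(\Z_{2},t)$ in Proposition \ref{Par is spherical}, avoids $\underline{\text{Rep}}(S_{t})$ altogether: using the generators $\CROSS$, $\FOURLEGS$, $\idstrand$, $\CAP$ from Proposition \ref{even partition sn tn form} one sets $S_{[1],[1]}:=\CROSS$, $ev_{[1]}:=\CAP$, $coev_{[1]}:=\CAP^{*}$, extends monoidally, and checks the finite list of diagrammatic relations (braid relation, $\CROSS\circ\CROSS=\idstrand\otimes\idstrand$, the two snakes, and compatibility of $\CROSS$ with $\CAP$ and with $\FOURLEGS$). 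In either route the work is essentially bookkeeping; the step I expect to cost the most is the stability statement itself --- Proposition \ref{peven closed under operations}, together with the remark that the ambient braiding and (co)evaluation are already even --- and, in the generators-and-relations variant, the verification that the braiding slides past the four-valent vertex $\FOURLEGS$, the single relation with no counterpart in the symmetric-group case.
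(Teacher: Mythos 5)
Your argument is correct. The paper itself states Proposition \ref{RepH0 spherical} without proof (only Propositions \ref{peven closed under operations} and \ref{even partition sn tn form} are flagged as ``elementary, see \cite{Tyriard}''), so there is no in-text proof to compare against; but your first route is exactly the natural one and is consistent with how the paper later treats the inclusion $P_{even}\subset P$ (the functor $\Phi:\underline{\text{Rep}}(H_t)\to\underline{\text{Rep}}(S_t)$ in Section 5). The key observations are all in place: the strict monoidal data and $\End([0])=\C P_{even}(0,0)=\C$ are immediate; closure of $P_{even}$ under $\star$, $\otimes$, $*$ (with the same loop-counting scalar $t^{l(q,p)}$) is Proposition \ref{peven closed under operations}; and the braiding, evaluation and coevaluation of the ambient partition category consist of two-element blocks, hence are even, so all the axioms (naturality, hexagons, zig-zags, sphericity) are inherited verbatim from $\underline{\text{Rep}}_0(S_t)$. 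One small simplification: for sphericity you do not even need the mirror-image argument --- for abstract (non-planar) partitions the left and right closures of $p\in P(n,n)$ impose the identical identification $i\sim i'$, so $Trace_l(p)=t^{c}=Trace_r(p)$ with $c$ the number of components of the closed diagram; the two traces are literally equal, not merely equal-by-symmetry. Your generators-and-relations variant is also viable, and you are right that its only genuinely new check beyond the $S_t$ case is naturality of the crossing against $\FOURLEGS$, since $\FOURLEGS$ plays no role in the structure morphisms themselves.
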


\begin{defi}
    We define the Deligne category for the hyperoctahedral groups in the reflection representation for $t\in \C$ by $\underline{\text{Rep}}(H_{t}):= (\underline{\text{Rep}}_{0}(H_{t}))^{Kar}$ and denote the corresponding $\C$-linear full embedding by $\iota_{G}:\underline{\text{Rep}}_{0}(H_{t}) \to \underline{\text{Rep}}(H_{t})$. Proposition \ref{Karoubian envelope spherical} shows us that $\underline{\text{Rep}}(H_{t})$ is a $\C$-linear spherical rigid symmetric monoidal category.
\end{defi}



\section{Universal properties}

Presentations for $Rep(S_t)$ and $Par(\mathbb{Z}_2,t)$ were derived in \cite{comes2017jellyfish} and \cite{Nyobe_Likeng_2021}. We derive a presentation for $Rep(H_t)$ and use this to state the universal properties for the two Deligne categories of the hyperoctahedral group.

\subsection{A presentation for the permutation category}


\begin{defi} \label{definition generators relations parZ2}
    $\text{Par}(\mathbb{Z}_{2})/\sim_{t}$ is a strict $\mathbb{C}$-linear monoidal category with a single generating object $W$. The generating morphisms are
    \begin{align*}
         &\idstrand=id_{W}: W\to W, \ 
         &\merge:W\otimes W \to W\\ 
         &\spliter:W \to W \otimes W, \
         &\crossing: W \otimes W \to W \otimes W\\
         &\tokstrand: W \to W, \quad g\in \mathbb{Z}_{2},\ 
         &\bottompin: \textbf{1} \to W\\
         &\toppin:W \to \textbf{1}, \ 
         &\lolly:\textbf{1} \to \textbf{1} \\
    \end{align*}
with the following relations for $g,h \in \Z_{2}$:
  \begin{gather} \label{GPC1}
        \begin{tikzpicture}[anchorbase]
            \draw (-0.4,-0.4) -- (0,0);
            \draw (0.25,-0.25) -- (0,0);
            \draw (0,0) -- (0,0.5);
            \opendot{0.25,-0.25};
        \end{tikzpicture}
        =
        \begin{tikzpicture}[anchorbase]
            \draw (0,-0.45) -- (0,0.45);
        \end{tikzpicture}
        =
        \begin{tikzpicture}[anchorbase]
            \draw (0.4,-0.4) -- (0,0);
            \draw (-0.25,-0.25) -- (0,0);
            \draw (0,0) -- (0,0.5);
            \opendot{-0.25,-0.25};
        \end{tikzpicture}
        \ ,\quad
        \begin{tikzpicture}[anchorbase]
            \draw (-0.4,0.4) -- (0,0);
            \draw (0.25,0.25) -- (0,0);
            \draw (0,0) -- (0,-0.5);
            \opendot{0.25,0.25};
        \end{tikzpicture}
        =
        \begin{tikzpicture}[anchorbase]
            \draw (0,0.45) -- (0,-0.45);
        \end{tikzpicture}
        =
        \begin{tikzpicture}[anchorbase]
            \draw (-0.25,0.25) -- (0,0);
            \draw (0.4,0.4) -- (0,0);
            \draw (0,0) -- (0,-0.5);
            \opendot{-0.25,0.25};
        \end{tikzpicture}
        \ ,\quad
        \begin{tikzpicture}[anchorbase]
            \draw (-0.4,-0.4) -- (-0.4,0) -- (-0.2,0.2) -- (0.2,-0.2) -- (0.4,0) -- (0.4,0.4);
            \draw (-0.2,0.2) -- (-0.2,0.4);
            \draw (0.2,-0.2) -- (0.2,-0.4);
        \end{tikzpicture}
        =
        \begin{tikzpicture}[anchorbase]
            \draw (-0.2,-0.4) -- (0,-0.2) -- (0.2,-0.4);
            \draw (-0.2,0.4) -- (0,0.2) -- (0.2,0.4);
            \draw (0,-0.2) -- (0,0.2);
        \end{tikzpicture}
        =
        \begin{tikzpicture}[anchorbase]
            \draw (0.4,-0.4) -- (0.4,0) -- (0.2,0.2) -- (-0.2,-0.2) -- (-0.4,0) -- (-0.4,0.4);
            \draw (0.2,0.2) -- (0.2,0.4);
            \draw (-0.2,-0.2) -- (-0.2,-0.4);
        \end{tikzpicture}
        \ ,
        \\ \label{GPC2}
        \begin{tikzpicture}[anchorbase]
            \draw (-0.2,-0.4) \braidto (0.2,0) \braidto (-0.2,0.4);
            \draw (0.2,-0.4) \braidto (-0.2,0) \braidto (0.2,0.4);
        \end{tikzpicture}
        =
        \begin{tikzpicture}[anchorbase]
            \draw (-0.2,-0.4) -- (-0.2,0.4);
            \draw (0.2,-0.4) -- (0.2,0.4);
        \end{tikzpicture}
        \ ,\quad
        \begin{tikzpicture}[anchorbase]
            \draw (-0.4,-0.4) -- (0.4,0.4);
            \draw (0.4,-0.4) -- (-0.4,0.4);
            \draw (0,-0.4) \braidto (-0.4,0) \braidto (0,0.4);
        \end{tikzpicture}
        =
        \begin{tikzpicture}[anchorbase]
            \draw (-0.4,-0.4) -- (0.4,0.4);
            \draw (0.4,-0.4) -- (-0.4,0.4);
            \draw (0,-0.4) \braidto (0.4,0) \braidto (0,0.4);
        \end{tikzpicture}
        \ ,
        \\ \label{GPC3}
        \begin{tikzpicture}[anchorbase]
            \draw (0.2,-0.2) -- (-0.3,0.3);
            \draw (-0.3,-0.3) -- (0.3,0.3);
            \opendot{0.2,-0.2};
        \end{tikzpicture}
        =
        \begin{tikzpicture}[anchorbase]
            \draw (-0.2,0) to (-0.2,0.3);
            \draw (0.2,-0.3) to (0.2,0.3);
            \opendot{-0.2,0};
        \end{tikzpicture}
        \ ,\quad
        \begin{tikzpicture}[anchorbase]
            \draw (0.2,0.2) -- (-0.3,-0.3);
            \draw (-0.3,0.3) -- (0.3,-0.3);
            \opendot{0.2,0.2};
        \end{tikzpicture}
        =
        \begin{tikzpicture}[anchorbase]
            \draw (-0.2,0) to (-0.2,-0.3);
            \draw (0.2,0.3) to (0.2,-0.3);
            \opendot{-0.2,0};
        \end{tikzpicture}
        \ ,\quad
        \begin{tikzpicture}[anchorbase]
            \draw (-0.4,-0.5) -- (0.2,0.3) -- (0.4,0.1) -- (0,-0.5);
            \draw (0.2,0.3) -- (0.2,0.6);
            \draw (-0.4,0.6) -- (-0.4,0.1) -- (0.4,-0.5);
        \end{tikzpicture}
        =
        \begin{tikzpicture}[anchorbase]
            \draw (-0.4,-0.4) -- (-0.2,-0.2) -- (-0.2,0) -- (0.2,0.4);
            \draw (0,-0.4) -- (-0.2,-0.2);
            \draw (0.2,-0.4) -- (0.2,0) -- (-0.2,0.4);
        \end{tikzpicture}
        \ ,\quad
        \begin{tikzpicture}[anchorbase]
            \draw (-0.4,0.5) -- (0.2,-0.3) -- (0.4,-0.1) -- (0,0.5);
            \draw (0.2,-0.3) -- (0.2,-0.6);
            \draw (-0.4,-0.6) -- (-0.4,-0.1) -- (0.4,0.5);
        \end{tikzpicture}
        =
        \begin{tikzpicture}[anchorbase]
            \draw (-0.4,0.4) -- (-0.2,0.2) -- (-0.2,0) -- (0.2,-0.4);
            \draw (0,0.4) -- (-0.2,0.2);
            \draw (0.2,0.4) -- (0.2,0) -- (-0.2,-0.4);
        \end{tikzpicture}
        \ ,
        \\ \label{GPC4}
        \begin{tikzpicture}[anchorbase]
          \draw (-0.2,-0.4) to[out=45,in=down] (0.2,0) to[out=up,in=-45] (0,0.2) -- (0,0.5);
          \draw (0.2,-0.4) to[out=135,in=down] (-0.2,0) to[out=up,in=225] (0,0.2);
        \end{tikzpicture}
        =
        \merge
        \ ,\quad
        \begin{tikzpicture}[anchorbase]
            \draw (0,-0.5) -- (0,-0.3) to[out=135,in=down] (-0.2,-0.1) -- (-0.2,0.1) to[out=up,in=225] (0,0.3) -- (0,0.5);
            \draw (0,-0.3) to[out=45,in=down] (0.2,-0.1) -- (0.2,0.1) to[out=up,in=-45] (0,0.3);
            \token{east}{-0.2,0}{g};
            \token{west}{0.2,0}{h};
        \end{tikzpicture}
        = \delta_{g,h}\
        \begin{tikzpicture}[anchorbase]
            \draw (0,-0.4) -- (0,0.4);
            \token{west}{0,0}{g};
        \end{tikzpicture}
        \ ,\quad
        \lolly=t,
        \\ \label{GPC5}
        \begin{tikzpicture}[anchorbase]
            \draw (0,-0.3) -- (0,0.3);
            \token{east}{0,-0.15}{h};
            \token{east}{0,0.15}{g};
        \end{tikzpicture}
        =
        \begin{tikzpicture}[anchorbase]
            \draw (0,-0.3) -- (0,0.3);
            \token{east}{0,0}{gh};
        \end{tikzpicture}
        \ ,\quad
        \begin{tikzpicture}[anchorbase]
            \draw (0,-0.3) -- (0,0.3);
            \token{east}{0,0}{1};
        \end{tikzpicture}
        =
        \begin{tikzpicture}[anchorbase]
            \draw (0,-0.3) -- (0,0.3);
        \end{tikzpicture}
        \ ,\quad
        \begin{tikzpicture}[anchorbase]
            \draw (-0.3,-0.3) -- (0.3,0.3);
            \draw (0.3,-0.3) -- (-0.3,0.3);
            \token{east}{-0.15,-0.15}{g};
        \end{tikzpicture}
        =
        \begin{tikzpicture}[anchorbase]
            \draw (-0.3,-0.3) -- (0.3,0.3);
            \draw (0.3,-0.3) -- (-0.3,0.3);
            \token{west}{0.15,0.15}{g};
        \end{tikzpicture}
        \ ,\quad
        \begin{tikzpicture}[anchorbase]
            \draw (0,-0.3) -- (0,0) -- (0.25,0.25);
            \draw (0,0) -- (-0.25,0.25);
            \token{east}{0,-0.15}{g};
        \end{tikzpicture}
        =
        \begin{tikzpicture}[anchorbase]
            \draw (0,-0.3) -- (0,0) -- (0.25,0.25);
            \draw (0,0) -- (-0.25,0.25);
            \token{east}{-0.125,0.125}{g};
            \token{west}{0.125,0.125}{g};
        \end{tikzpicture}
        \ ,\quad
        \begin{tikzpicture}[anchorbase]
            \draw (0,-0.3) -- (0,0.3);
            \opendot{0,-0.3};
            \token{east}{0,0}{g};
        \end{tikzpicture}
        =
        \bottompin
        \ .
    \end{gather}
\end{defi}

\begin{rem}
    It can be proven that all the possible horizontal and vertical reflections of the above relations will also hold, as a consequence of the given relations, see \cite[Proposition 4.3]{Nyobe_Likeng_2021}.
\end{rem}

%
%
%
%

\begin{thm} \label{Functor H tilde} [\cite{Nyobe_Likeng_2021}]
    Let $t\in \mathbb{C}$. The categories $\text{Par}(\Z_{2},t)$ and $\text{Par}(\mathbb{Z}_{2})/\sim_{t}$ are isomorphic as $\C$-linear symmetric monoidal categories.
\end{thm}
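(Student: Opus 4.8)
The plan is to exhibit a strict $\C$-linear symmetric monoidal functor
\[
\tilde H \colon \text{Par}(\mathbb{Z}_2)/\sim_t \longrightarrow \text{Par}(\Z_2,t)
\]
and to show it is an isomorphism of categories by checking that it is bijective on objects, full, and faithful. On objects, strict monoidality forces $W\mapsto[\tilde 1]$ and hence $W^{\otimes n}\mapsto[\tilde n]$; since the objects of $\text{Par}(\mathbb{Z}_2)/\sim_t$ are exactly the $W^{\otimes n}$ for $n\in\N$, this is a bijection onto $\{[\tilde n]\}$. On generating morphisms, put $\idstrand\mapsto id_{[\tilde 1]}$, $\merge\mapsto(\{\{1,2,1'\}\},(1,1,1))$, $\spliter$ to the involution of that partition, $\crossing\mapsto\phi\big((1,2)\big)$, $\tokstrand$ with token $g$ to $(\{\{1,1'\}\},(g,g))$, $\bottompin\mapsto(\{\{1'\}\},(1))$, $\toppin\mapsto(\{\{1\}\},(1))$, and $\lolly\mapsto t\in\C=\End_{\text{Par}(\Z_2,t)}([\tilde 0])$. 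Extending $\C$-linearly and monoidally, one must verify that the images of the defining relations \eqref{GPC1}--\eqref{GPC5} hold in $\text{Par}(\Z_2,t)$; these are short computations using the composition rule $q\circ p=t^{l(q,p)}qp$, the equivalence $\simeq$ on colourings, and Proposition \ref{compatibilityproposition}. For instance, \eqref{GPC2} records that $\phi$ is a monoid homomorphism and that $\crossing$ is a symmetric braiding compatible with merges and splits; \eqref{GPC4} and \eqref{GPC5} encode precisely the definition of $\Z_2$-coloured vertical concatenation, the equivalence $\simeq$, and the compatibility condition; and $\lolly=t$ records the evaluation of a closed loop. Symmetric monoidality of $\tilde H$ is then immediate, both categories being strict with braiding given by the crossing.

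Fullness uses the normal form of Definition \ref{definition normal form Z2 coloured}, built on Proposition \ref{partition can be written by permutation partitions}: any $\Z_2$-coloured partition $(p,z)\in P_{\Z_2}(k,l)$ can be written as $\phi(\sigma)\circ p'\circ\phi(\rho)$, pre- and post-composed with columns of token-strands recording $z$ as in Remark \ref{Special form of coloured partitions}. The permutation partitions $\phi(\sigma),\phi(\rho)$ are composites of $\crossing$; the non-crossing form $p'$ is a horizontal concatenation of single-block partitions $p_i=\{\{1,\ldots,k_i,1',\ldots,l_i'\}\}$, each of which is a composite of $\merge$, $\spliter$, $\bottompin$, $\toppin$; and the colour vector is carried by $\tokstrand$'s. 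Hence every generator of $\Hom_{\text{Par}(\Z_2,t)}([\tilde k],[\tilde l])=\C P_{\Z_2}(k,l)$ lies in the image of $\tilde H$, so $\tilde H$ is full.

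Faithfulness is the heart of the argument and where I expect the real work to be. The decomposition above yields a set-theoretic section $s$ assigning to each basis element $b\in P_{\Z_2}(k,l)/\simeq$ a morphism $s(b)\in\Hom_{\text{Par}(\mathbb{Z}_2)/\sim_t}(W^{\otimes k},W^{\otimes l})$ with $\tilde H(s(b))=b$; in particular the $s(b)$ are linearly independent. It therefore suffices to prove that the $s(b)$ also \emph{span} each Hom-space of $\text{Par}(\mathbb{Z}_2)/\sim_t$, for then $\tilde H$ carries a basis to a basis and is a linear isomorphism on each Hom-space, hence faithful. Spanning is established by a rewriting argument: using the relations \eqref{GPC1}--\eqref{GPC5} one reduces an arbitrary word in the generators to a $\C$-linear combination of normal forms --- pulling all crossings to the top and bottom so the interior becomes planar, straightening the planar part into a horizontal concatenation of single-block partitions via the merge/split/cap/cup relations, pushing every token to the boundary via \eqref{GPC5}, and replacing each closed component that appears by the scalar it contributes (ultimately by $\lolly=t$). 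The subtle points are ensuring that this reduction terminates and that the $\Z_2$-colouring is handled coherently with the compatibility condition of Proposition \ref{compatibilityproposition}; this is essentially the analysis in the proof of \cite[Theorem 4.4]{Nyobe_Likeng_2021} (compare the normal-form arguments for $\underline{\text{Rep}}(S_t)$ in \cite{comes2017jellyfish}), which one may invoke. Together, fullness and the spanning of normal forms show that $\tilde H$ is a symmetric monoidal isomorphism, proving the theorem.
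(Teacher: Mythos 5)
Your overall route --- define $\tilde H$ on the generators, check the defining relations to get well-definedness, deduce fullness from the normal form of Definition \ref{definition normal form Z2 coloured}, and deduce faithfulness by showing that the chosen preimages of the diagram basis also span each Hom-space of $\text{Par}(\mathbb{Z}_2)/\sim_t$ --- is exactly the standard argument. The paper itself gives no proof of this theorem; it cites \cite{Nyobe_Likeng_2021} and merely records the functor $\tilde H$ on generators, and the hard rewriting step you defer to \cite[Theorem 4.4]{Nyobe_Likeng_2021} is indeed where that reference does the work. So the strategy is the right one and matches the source.

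There is, however, one concrete error in your generator assignments: you send the token strand with token $g$ to $(\{\{1,1'\}\},(g,g))$. Under the equivalence $\simeq$ on colourings, the labels of a block may be simultaneously multiplied by any element of $\Z_2$; multiplying both entries of $(g,g)$ by $g$ yields $(1,1)$, so your image of the token strand is equivalent to $id_{[\tilde 1]}$ for \emph{every} $g$. With that assignment $\tilde H$ cannot be faithful (the token strand with $g=-1$ and the plain strand are identified), and it is not even well defined: the relation in the fourth group of defining relations which says that splitting a strand, applying tokens $g$ and $h$ to the two branches, and merging equals $\delta_{g,h}$ times a token strand would be sent to an identity of the form ``$id = \delta_{g,h}\cdot id$'', which fails for $g\neq h$. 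The correct image, as in the paper's description of $\tilde H$, is the identity partition with the label $g$ placed on exactly \emph{one} of the two vertices, i.e.\ $(\{\{1,1'\}\},(g,1))\simeq(\{\{1,1'\}\},(1,g))$. With that single correction the remainder of your argument goes through.
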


The isomorphism is given by the functor $\tilde{H}: \text{Par}(\Z_{2})/\sim_{t} \to \text{Par}(\Z_{2},t)$. It is defined on objects by $W^{\otimes k} \mapsto [k]$, which makes it clearly bijective on the objects. On the morphisms we define it by sending:
 \begin{gather*}
        \merge \mapsto
        \begin{tikzpicture}[anchorbase]
            \pd{-0.2,0};
            \pd{0.2,0};
            \pd{0,0.4};
            \draw (-0.2,0) \braidto (0,0.4);
            \draw (0.2,0) \braidto (0,0.4);
        \end{tikzpicture}
        \ ,\quad
        \spliter \mapsto
        \begin{tikzpicture}[anchorbase]
            \pd{-0.2,0.4};
            \pd{0.2,0.4};
            \pd{0,0};
            \draw (0,0) \braidto (-0.2,0.4);
            \draw (0,0) \braidto (0.2,0.4);
        \end{tikzpicture}
        \ ,\quad
        \crossing \mapsto
        \begin{tikzpicture}[anchorbase]
            \pd{-0.2,0.4};
            \pd{0.2,0.4};
            \pd{-0.2,0};
            \pd{0.2,0};
            \draw (0.2,0) -- (-0.2,0.4);
            \draw (-0.2,0) -- (0.2,0.4);
        \end{tikzpicture}
        \ ,\quad
        \bottompin \mapsto
        \begin{tikzpicture}[centerzero]
            \pd{0,0.2};
        \end{tikzpicture}
        \ ,\quad
        \toppin \mapsto
        \begin{tikzpicture}[centerzero]
            \pd{0,-0.2};
        \end{tikzpicture}
        \ ,\quad
        \tokstrand \mapsto
        \begin{tikzpicture}[centerzero]
            \pd{0,-0.2} node[anchor=north] {\dotlabel{g}};
            \pd{0,0.2};
            \draw (0,-0.2) -- (0,0.2);
        \end{tikzpicture}
        =
        \begin{tikzpicture}[centerzero]
            \pd{0,-0.2};
            \pd{0,0.2} node[anchor=south] {\dotlabel{g}};
            \draw (0,-0.2) -- (0,0.2);
        \end{tikzpicture}
        \ 
    \end{gather*}
and extending this rule $\C$-linearly. Remember that unlabeled partitions are labeled by 1.

From this presentation one can deduce that $\text{Par}(\Z_{2},t)$ satisfies the following universal property.

\begin{thm} \label{real universal property Par Z2}
    Let $t\in \C$. Let $\mathcal{C}$ be a $\C$-linear symmetric monoidal category with a $t$-dimensional special commutative Frobenius object with involution $(A,\alpha,\delta,\beta,\epsilon,\zeta)$ for an object $A\in Ob(\mathcal{C})$, a multiplication $\alpha: A\otimes A \to A$, a comultiplication $\beta: A \to A \otimes A$, a unit $\delta: \textbf{1} \to A$, a counit $\epsilon: A \to \textbf{1}$ and an involution $\zeta: A \to A$. Let $\gamma:=s^{\mathcal{C}}_{A,A}: A \otimes A \to A\otimes A$ be the swap morphism. These morphisms satisfy:\\
        1.\text{ $A$ is a Frobenius object } 
        \begin{align*}
        \alpha \circ (id_{A} \otimes \delta) &= id_{A}= \alpha \circ (\delta \otimes id_{A}),\\
        (id_{A} \otimes \epsilon) \circ \beta &= id_{A} = (\epsilon \otimes id_{A}) \circ \beta,\\
        (\alpha \otimes id_{A}) \circ (id_{A} \otimes \beta) &= \beta \circ \alpha = (id_{A} \otimes \alpha) \circ ( \beta \otimes id_{A}).
        \end{align*}                   
        2.\text{ $A$ is commutative}:$\alpha \circ \gamma = \alpha$.
        3.\text{ $A$ is special}: $\alpha \circ \beta = id_{A}$.
        4.\text{ $A$ has dimension $t$}: $\epsilon \circ \delta = t$.
        5. $\zeta$ is an involution: $\zeta \circ \zeta =id_{A}$.
        6. As a special commutative Frobenius object, $A$ is compatible with the involution $\gamma$
        \begin{align*}
        \beta \circ \zeta &= (\zeta \otimes \zeta) \circ \beta, \\
         \zeta \circ \delta &= \delta,\\
        \alpha \circ (\zeta \otimes \zeta) \circ \beta &= \zeta,\\
         \alpha \circ (id_{A} \otimes \zeta) \circ \beta &= 0 =\alpha \circ (\zeta \otimes \id_{A}) \circ \beta.
        \end{align*}
        
Then there exists a unique strict $\C$-linear symmetric monoidal functor  $\mathcal{H}': \text{Par}(\Z_{2},t) \to \mathcal{C}$, with $\mathcal{H}'([\tilde{k}]) = A^{\otimes k}$ for all $k\in \N$ and $\mathcal{H}'(\tilde{H}(\merge))= \alpha$, $ \mathcal{H}'(\tilde{H}(\spliter)) =\beta$, $ \mathcal{H}'(\tilde{H}(\CROSS))= \gamma$, $ \mathcal{H}'(\tilde{H}(\bottompin))= \delta$, $\mathcal{H}'(\tilde{H}(\toppin))= \epsilon$ and $\mathcal{H}'(\tilde{H}(\tokstrand[-1]))= \zeta$. Furthermore, if $\mathcal{C}$ is Karoubi, there exists an up to isomorphism uniqe $\C$-linear symmetric monoidal functor $\mathcal{H}:  \text{Par}(\Z_{2},t)^{Kar} \to \mathcal{C}$ with $\mathcal{H}'=\mathcal{H}\circ \iota_{H}$.
\end{thm}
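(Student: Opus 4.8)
The plan is to reduce everything to the presentation of $\text{Par}(\Z_{2},t)$ by generators and relations. By Theorem~\ref{Functor H tilde} there is an isomorphism of strict $\C$-linear symmetric monoidal categories $\tilde H\colon \text{Par}(\Z_{2})/\sim_{t} \xrightarrow{\sim} \text{Par}(\Z_{2},t)$, so it suffices to construct a strict $\C$-linear symmetric monoidal functor $\mathcal H''\colon \text{Par}(\Z_{2})/\sim_{t} \to \mathcal C$ realising the prescribed values on $W$ and on the generating morphisms, and then put $\mathcal H':=\mathcal H''\circ\tilde H^{-1}$. Now $\text{Par}(\Z_{2})/\sim_{t}$ is, by construction, the strict $\C$-linear monoidal category \emph{presented} by the generating object $W$ and the generating morphisms of Definition~\ref{definition generators relations parZ2} subject to \eqref{GPC1}--\eqref{GPC5}; I would use the corresponding universal property, namely that to give $\mathcal H''$ is the same as to choose an object of $\mathcal C$ for $W$ together with morphisms of $\mathcal C$ in the correct Hom-spaces for the generators, subject only to the requirement that the images of \eqref{GPC1}--\eqref{GPC5} hold in $\mathcal C$. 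Such an $\mathcal H''$ is automatically symmetric monoidal, since it sends $\crossing$, which is the symmetry of $\text{Par}(\Z_{2})/\sim_{t}$, to $s^{\mathcal C}_{A,A}$. The same argument yields uniqueness of $\mathcal H''$, hence of $\mathcal H'$: a strict $\C$-linear monoidal functor is determined by its values on $W$ and on the generating morphisms, and all of those are prescribed.

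Concretely, I would set $\mathcal H''(W):=A$ and send $\merge\mapsto\alpha$, $\spliter\mapsto\beta$, $\crossing\mapsto\gamma=s^{\mathcal C}_{A,A}$, $\bottompin\mapsto\delta$, $\toppin\mapsto\epsilon$, $\lolly\mapsto t\cdot\id_{\textbf{1}}$, $\idstrand\mapsto\id_A$, and, using $\Z_{2}=\{1,-1\}$ so that there are exactly two tokens, $\tokstrand[1]\mapsto\id_A$ and $\tokstrand[-1]\mapsto\zeta$; this matches the values demanded in the statement. It then remains to verify that the images of \eqref{GPC1}--\eqref{GPC5} hold in $\mathcal C$, a finite and essentially mechanical check in which each diagrammatic relation becomes one of the stated hypotheses. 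The three relations in \eqref{GPC1} are exactly the Frobenius axioms. In \eqref{GPC4}, the first relation is commutativity $\alpha\circ\gamma=\alpha$; the tokened-bubble relation, evaluated at $g=h=1$, at $g=h=-1$, and at $g\ne h$, gives specialness $\alpha\circ\beta=\id_A$, then $\alpha\circ(\zeta\otimes\zeta)\circ\beta=\zeta$, then $\alpha\circ(\id_A\otimes\zeta)\circ\beta=0=\alpha\circ(\zeta\otimes\id_A)\circ\beta$; and $\lolly=t$ (the closed bubble) becomes the dimension condition $\epsilon\circ\delta=t$. In \eqref{GPC5}: $\tokstrand[1]=\idstrand$ holds by construction; the relation composing tokens $g$ and $h$ into $gh$ reduces, at $g=h=-1$, to $\zeta\circ\zeta=\id_A$; the token--split and token--unit relations become $\beta\circ\zeta=(\zeta\otimes\zeta)\circ\beta$ and $\zeta\circ\delta=\delta$; and sliding a token past a crossing is naturality of $s^{\mathcal C}$. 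Finally, the relations \eqref{GPC2} (the braid relation and involutivity of the crossing) and \eqref{GPC3} (pushing $\delta$, $\epsilon$, and $\alpha$ through a crossing) hold automatically, because $\gamma=s^{\mathcal C}_{A,A}$ is the genuine symmetry of $\mathcal C$: they are instances of the symmetric monoidal coherence axioms and of the naturality of $s^{\mathcal C}$. Once $\mathcal H''$ is known to be well defined, the reflected relations mentioned in the Remark after Definition~\ref{definition generators relations parZ2} need no separate check, being $\C$-linear consequences of \eqref{GPC1}--\eqref{GPC5}.

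For the Karoubian statement, assuming $\mathcal C$ Karoubi I would invoke the universal property of the Karoubian envelope of a $\C$-linear symmetric monoidal category (cf.\ Definition~\ref{THE definition of Karoubian envelope and embedding} and the construction underlying Proposition~\ref{Karoubian envelope spherical}): $\mathcal H'$ extends uniquely to a $\C$-linear additive functor on $\text{Par}(\Z_{2},t)^{add}$, and, $\mathcal C$ being idempotent complete, this extends uniquely up to isomorphism to a $\C$-linear functor $\mathcal H\colon \text{Par}(\Z_{2},t)^{Kar}=(\text{Par}(\Z_{2},t)^{add})^{\musNatural}\to\mathcal C$ with $\mathcal H\circ\iota_{H}=\mathcal H'$. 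It carries a symmetric monoidal structure because the monoidal structure on $\text{Par}(\Z_{2},t)^{Kar}$ is induced from that of $\text{Par}(\Z_{2},t)$ and every object of the Karoubian envelope is a direct summand of one coming from $\text{Par}(\Z_{2},t)$; uniqueness up to isomorphism follows since any two such extensions agree with $\mathcal H'$ on $\text{Par}(\Z_{2},t)$. I expect the only genuine work to be the relation-by-relation bookkeeping of the second paragraph — reading off the orientation of each diagram in \eqref{GPC1}--\eqref{GPC5} and matching it with the right hypothesis, and keeping track of the token values $g\in\{1,-1\}$; the observation that the symmetry-type relations \eqref{GPC2} and \eqref{GPC3} come for free is exactly what makes the hypothesis list precisely conditions 1--6 and no longer. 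The Karoubian extension itself is routine given the cited universal properties.
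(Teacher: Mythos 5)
Your proposal is correct and follows the same route as the paper's proof: transport the problem through the isomorphism $\tilde H$ to the presented category $\text{Par}(\Z_{2})/\sim_{t}$, define $\mathcal H''$ on the generators by the prescribed data, observe that the symmetry-type relations \eqref{GPC2}, \eqref{GPC3} and the token-past-crossing relation hold automatically since $\gamma=s^{\mathcal C}_{A,A}$, verify the remaining relations against hypotheses 1--6, and finish with the universal property of the Karoubian envelope. Your relation-by-relation bookkeeping simply makes explicit the ``direct verification'' the paper leaves to the reader.
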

\begin{proof}
    We define the functor $\mathcal{H}'': \text{Par}(\mathbb{Z}_{2})/\sim_{t} \to \mathcal{C}$ on objects by $\mathcal{H}''(W^{\otimes k}):= A^{\otimes k}$ and on morphisms by the $\C$-linear extension of
    $\mathcal{H}''(\merge):= \alpha, \mathcal{H}''(\spliter) :=\beta, \mathcal{H}''(\CROSS):= \gamma$, $\mathcal{H}''(\bottompin):= \delta$, $\mathcal{H}''(\toppin):= \epsilon$ and $\mathcal{H}''(\tokstrand[-1]):= \zeta$. Because $\mathcal{C}$ is a symmetric monoidal category the relations (4.2),(4.3) and the third relation of (4.5) hold. A direct verification shows that all the necessary relations are satisfied. This shows that the functor $\mathcal{H}''$ is a well-defined strict $\C$-linear symmetric monoidal functor.
    We use Theorem \ref{Functor H tilde} and define $\mathcal{H}' := \mathcal{H}'' \circ \tilde{H}^{-1}$, which is the wanted $C$-linear strict symmetric monoidal functor. If $\mathcal{C}$ is Karoubi, the universal property of the Karoubian envelope implies the existence of the functor $\mathcal{H}$.
\end{proof}

\begin{cor} \label{functors category for Parz2}
    Let $t\in \C$ and $\mathcal{C}$ be a $\C$-linear symmetric monoidal Karoubi category. Then there is an equivalence
    \begin{align*}
        \text{Fun}^{\otimes,Symm}_{\C}(\text{Par}(\Z_{2},t)^{Kar},\mathcal{C}) \simeq \text{Frob}_{\C}^{Spec,inv}(\mathcal{C},t)
    \end{align*}
    between the category $\text{Fun}^{\otimes,Symm}_{\C}(\text{Par}(\Z_{2},t)^{Kar},\mathcal{C})$ of $\C$-linear symmetric monoidal functors from $\text{Par}(\Z_{2},t)^{Kar}$ to $\mathcal{C}$ with natural isomorphisms between them, and the subcategory $\text{Frob}_{\C}^{Spec,inv}(\mathcal{C},t)$ of $\mathcal{C}$ consisting of $t$-dimensional special commutative Frobenius objects with involution and their isomorphisms.
\end{cor}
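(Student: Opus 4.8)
The plan is to upgrade the universal property of Theorem~\ref{real universal property Par Z2} from a bijection on (isomorphism classes of) objects to an equivalence of categories, by writing down the evident evaluation functor and checking that it is fully faithful and essentially surjective. First I would define
\[
\Phi\colon \text{Fun}^{\otimes,Symm}_{\C}(\text{Par}(\Z_{2},t)^{Kar},\mathcal{C}) \longrightarrow \text{Frob}_{\C}^{Spec,inv}(\mathcal{C},t)
\]
on objects by sending a $\C$-linear symmetric monoidal functor $\mathcal{F}$ to the object $\mathcal{F}([\tilde{1}])$ together with the structure maps $\mathcal{F}(\iota_H(\tilde{H}(\merge)))$, $\mathcal{F}(\iota_H(\tilde{H}(\spliter)))$, $\mathcal{F}(\iota_H(\tilde{H}(\bottompin)))$, $\mathcal{F}(\iota_H(\tilde{H}(\toppin)))$, $\mathcal{F}(\iota_H(\tilde{H}(\tokstrand[-1])))$, each conjugated by the monoidal coherence isomorphisms of $\mathcal{F}$ so that its source and target become tensor powers of $\mathcal{F}([\tilde{1}])$; the crossing is sent to the symmetry of $\mathcal{C}$ automatically. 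Since the defining relations of $\text{Par}(\Z_{2})/\!\sim_{t}$ hold in $\text{Par}(\Z_{2},t)^{Kar}$ (by Theorem~\ref{Functor H tilde} together with the embedding $\iota_H$) and are preserved by $\C$-linear symmetric monoidal functors, $\Phi(\mathcal{F})$ satisfies all the conditions listed in Theorem~\ref{real universal property Par Z2}, i.e.\ it is a $t$-dimensional special commutative Frobenius object with involution. On a monoidal natural isomorphism $\eta\colon\mathcal{F}\Rightarrow\mathcal{G}$ I would set $\Phi(\eta):=\eta_{[\tilde{1}]}$; running the naturality square of $\eta$ through each of the five generators, and using the compatibility of $\eta$ with the coherence data, shows that $\eta_{[\tilde{1}]}$ intertwines the two Frobenius structures, so $\Phi$ is well defined on morphisms.

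Essential surjectivity is then immediate from Theorem~\ref{real universal property Par Z2}: given $(A,\alpha,\delta,\beta,\epsilon,\zeta)\in\text{Frob}_{\C}^{Spec,inv}(\mathcal{C},t)$, the theorem produces $\mathcal{H}\colon\text{Par}(\Z_{2},t)^{Kar}\to\mathcal{C}$ whose value $\Phi(\mathcal{H})$ is isomorphic to $A$ with its given structure maps (and is equal to it in the concrete model used in the proof of that theorem). For faithfulness, the point I would exploit is that, by Theorem~\ref{Functor H tilde} and Definition~\ref{definition generators relations parZ2}, $\text{Par}(\Z_{2},t)$ is generated as a $\C$-linear symmetric monoidal category by the single object $[\tilde{1}]$ and the five morphisms $\tilde{H}(\merge),\tilde{H}(\spliter),\tilde{H}(\bottompin),\tilde{H}(\toppin),\tilde{H}(\tokstrand[-1])$. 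Hence a monoidal natural transformation between two symmetric monoidal functors on $\text{Par}(\Z_{2},t)$ is determined by its component at $[\tilde{1}]$: the components at $[\tilde{k}]$ are forced to be the coherence-conjugated tensor powers of that component, and naturality on all of $\text{Par}(\Z_{2},t)$ reduces to naturality at the five generators. Since every object of $\text{Par}(\Z_{2},t)^{Kar}=(\text{Par}(\Z_{2},t)^{add})^{\musNatural}$ is a retract of a finite biproduct of the $[\tilde{k}]$ and the functors involved are additive, the same holds over $\text{Par}(\Z_{2},t)^{Kar}$, which gives faithfulness of $\Phi$.

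For fullness, given an isomorphism $f\colon\Phi(\mathcal{F})\to\Phi(\mathcal{G})$ of Frobenius objects with involution, I would define $\eta_{[\tilde{k}]}$ to be the coherence-conjugate of $f^{\otimes k}$; the hypothesis that $f$ intertwines the two Frobenius structures is precisely what makes $\eta$ natural at each of the five generators, hence at every morphism of $\text{Par}(\Z_{2},t)$, and $\eta$ is monoidal by construction. Extending $\eta$ over biproducts and retracts produces a monoidal natural isomorphism $\mathcal{F}\Rightarrow\mathcal{G}$ on $\text{Par}(\Z_{2},t)^{Kar}$ with $\Phi(\eta)=f$, so $\Phi$ is full; being also faithful and essentially surjective, it is an equivalence of categories.

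The part I expect to be the main nuisance is the bookkeeping with the monoidal coherence isomorphisms of the (a priori non-strict) functors $\mathcal{F},\mathcal{G}$: one must consistently insert the iterated comparison maps identifying $\mathcal{F}([\tilde{k}])$ with $\mathcal{F}([\tilde{1}])^{\otimes k}$ and verify, using the monoidal coherence axioms, that all the resulting comparison diagrams for $\Phi(\mathcal{F})$, $\Phi(\mathcal{G})$ and the $\eta_{[\tilde{k}]}$ commute. This is routine rather than deep, and it can be sidestepped by first replacing each $\mathcal{F}$ by an isomorphic strict monoidal functor (Mac Lane strictification), after which $\Phi(\mathcal{F})=\mathcal{F}([\tilde{1}])$ and $\eta_{[\tilde{k}]}=\eta_{[\tilde{1}]}^{\otimes k}$ hold on the nose and each verification above becomes immediate.
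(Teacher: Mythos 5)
Your proposal is correct and follows essentially the same route as the paper: the paper also evaluates at $[\tilde{1}]$ and, conversely, uses Theorem \ref{real universal property Par Z2} to produce the functor $\mathcal{H}^{A}$ from a Frobenius object $A$ and builds the natural isomorphism from an isomorphism $\kappa$ of Frobenius objects by setting its component at $[\tilde{k}]$ to $\kappa^{\otimes k}$ and extending over the Karoubian envelope. The only difference is organizational — the paper exhibits an explicit quasi-inverse while you verify fully faithful plus essentially surjective — and your explicit faithfulness argument and attention to the monoidal coherence data fill in steps the paper leaves implicit.
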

\begin{proof}
    We want to construct an inverse for the functor 
    \begin{align*}
        \text{Fun}^{\otimes,Symm}_{\C}(\text{Par}(\Z_{2},t)^{Kar},\mathcal{C}) &\to \text{Frob}_{\C}^{Spec,inv}(\mathcal{C},t) \end{align*}
defined by \begin{align*}
        \mathcal{H} &\mapsto \mathcal{G}([\tilde{1}]) \text{ for all functors } \mathcal{H}:\text{Par}(\Z_{2},t)^{Kar} \to \mathcal{C} \text{ and }\\
        \kappa &\mapsto \kappa_{A} \text{ for all natural isomorphisms } \kappa:\mathcal{H}_{1} \to \mathcal{H}_{2}.
    \end{align*}

    Let $A$ be a $t$-dimensional special commutative Frobenius object with involution in $\mathcal{C}$. Then theorem \ref{real universal property Par Z2} implies the existence of a functor $\mathcal{H}^{A}:\text{Par}(\Z_{2},t)^{Kar}\to \mathcal{C}$ with the properties that were stated in the theorem.
    Let $\kappa: A \to B$ be an isomorphism in $\text{Frob}_{\C}^{Spec,inv}(\mathcal{C},t)$. We define a natural isomorphism $\mathcal{H}^{\kappa}:\mathcal{H}^{A} \to \mathcal{H}^{B}$ as followed. First we define a natural isomorphism $\mathcal{H}^{\kappa,0}$ between $\mathcal{H}^{A,0}:=\mathcal{G}^{A} \circ \iota_{H}: \text{Par}(\Z_{2},t)\to \mathcal{C}$ and $\mathcal{H}^{B,0}:= \mathcal{H}^{B} \circ \iota_{H}:\text{Par}(\Z_{2},t) \to \mathcal{C}$ by 
    \begin{align*}
        \mathcal{H}^{\kappa,0}_{[k]}:=\kappa^{\otimes k}:A^{\otimes k} \to B^{\otimes k}
    \end{align*}
     for all $k\in \N$. Let $f:[k]\to [l]$ be a morphism in $\text{Par}(\Z_{2},t)$. Because $\kappa$ respects the structure of $A$ and $B$, the following square commutes
    
    \begin{equation*} \label{commutative phi square}
\begin{tikzcd}
A^{\otimes k} \arrow{r}{\mathcal{H}^{A,0}(f)} \arrow[d,"\kappa^{\otimes k}"] & A^{\otimes l} \arrow{d}{\kappa^{\otimes l}} \ \\  B^{\otimes k} \arrow{r}{\mathcal{H}^{B,0}(f)} & B^{\otimes l}.
\end{tikzcd}
\end{equation*}
    This shows that $\mathcal{H}^{k,0}$ is a natural isomorphism. By the properties of the universal property of the Karoubian envelope it extends uniquely to a natural isomorphism $\mathcal{H}^{\kappa}:\mathcal{H}^{A} \to \mathcal{G}^{B}$ such that $\mathcal{H}^{\kappa,0}=\mathcal{H}^{\kappa}\circ \iota_{H}$.   
    We obtained an inverse functor 
    \begin{align*}
        \text{Frob}_{\C}^{Spec,inv}(\mathcal{C},t) &\to \text{Fun}^{\otimes,Symm}_{\C}(\text{Par}(\Z_{2},t)^{Kar},\mathcal{C}),
    \end{align*} 
    and this shows the equivalence.
\end{proof}


\subsection{A presentation for the reflection category}

In this section we describe $\underline{\text{Rep}}_{0}(H_{t})$ using generators and relations and use this result to give a universal property of $\underline{\text{Rep}}(H_{t})$. The description in Definition \ref{generator relations repH0 definition} is a new one. The structure on the generating object is a bit different from the previous case, but it is still possible to discuss the universal property of this category, see Corollary \ref{Functor categories for Ht}. It will turn out to be a very useful tool in proving theorem \ref{Omega0 exists!}.

\begin{defi} \label{generator relations repH0 definition}
    $\text{Par}_{t}$ is a strict $\mathbb{C}$-linear monoidal category with a single generating object $W$. The generating morphisms are 
    \begin{align*}
         &\idstrand=id_{W}: W\to W, \ 
         & \begin{tikzpicture}[anchorbase]
            \draw (-0.2,-0.4) -- (0,-0.2) -- (0.2,-0.4);
            \draw (-0.2,0.4) -- (0,0.2) -- (0.2,0.4);
            \draw (0,-0.2) -- (0,0.2);
        \end{tikzpicture} :W\otimes W \to W\otimes W\\ 
        & \begin{tikzpicture}[anchorbase]
            \draw (-0.2,-0.4) -- (0,-0.2) -- (0.2,-0.4);
            \draw (0,-0.2) -- (0,0.2);
            \opendot{0,0.2};
        \end{tikzpicture}: W \otimes W \to \textbf{1}, \
        & \CROSS: W\otimes W \to W \otimes W\\
        & \begin{tikzpicture}[anchorbase]
            \draw (-0.2,0.4) -- (0,0.2) -- (0.2,0.4);
            \draw (0,-0.2) -- (0,0.2);
            \opendot{0,-0.2};
        \end{tikzpicture}: \textbf{1} \to W \otimes W\\
    \end{align*}
with the following relations and the relations obtained by reflecting them horizontally and/or vertically:
  \begin{gather} \label{GPC1}
  \begin{tikzpicture}[anchorbase]
            \draw (-0.2,-0.4) -- (0,-0.2) -- (0.2,-0.4);
            \draw (-0.2,0.4) -- (0,0.2) -- (0.2,0.4);
            \draw (-0.2,0.4) -- (0,0.6) -- (0.2,0.4);
            \draw (-0.2,1.2) -- (0,1.0) -- (0.2,1.2);
            \draw (0,-0.2) -- (0,0.2);
            \draw (0,0.6) -- (0,1.0);
        \end{tikzpicture}
        =
        \begin{tikzpicture}[anchorbase]
            \draw (-0.2,-0.4) -- (0,-0.2) -- (0.2,-0.4);
            \draw (-0.2,0.4) -- (0,0.2) -- (0.2,0.4);
            \draw (0,-0.2) -- (0,0.2);
        \end{tikzpicture}
        \ , \quad
        \begin{tikzpicture}[anchorbase]
            \draw (-0.2,-0.4) -- (0,-0.2) -- (0.2,-0.4);
            \draw (-0.2,0.4) -- (0,0.2) -- (0.2,0.4);
            \draw (0,-0.2) -- (0,0.2);
            \draw (0.2,0.4) -- (0.4,0.6) -- (0.6,0.4);
            \draw (0.2,1.2) -- (0.4,1.0) -- (0.6,1.2);
            \draw (0.4,0.6) -- (0.4,1.0);
            \draw (-0.2,0.4) -- (-0.2,1.2);
            \draw (0.6,-0.4) -- (0.6,0.4);
        \end{tikzpicture}
        =
        \begin{tikzpicture}[anchorbase]
            \draw (0.2,-0.4) -- (0.4,-0.2) -- (0.6,-0.4);
            \draw (0.2,0.4) -- (0.4,0.2) -- (0.6,0.4);
            \draw (0.4,-0.2) -- (0.4,0.2);
            \draw (-0.2,0.4) -- (0,0.6) -- (0.2,0.4);
            \draw (-0.2,1.2) -- (0,1.0) -- (0.2,1.2);
            \draw (0,0.6) -- (0,1.0);
            \draw (0.6,0.4) -- (0.6,1.2);
            \draw (-0.2,-0.4) -- (-0.2,0.4);
        \end{tikzpicture}
        \ , \quad
        \begin{tikzpicture}[anchorbase]
            \draw (-0.2,0.4) -- (0,0.2) -- (0.2,0.4);
            \draw (-0.2,0.4) -- (0,0.6) -- (0.2,0.4);
            \draw (0,-0.2) -- (0,0.2);
            \draw (0,0.6) -- (0,1.0);
            \opendot{0,-0.2};
            \opendot{0,1.0};
        \end{tikzpicture}
        =
        t
        \ , \quad
        \begin{tikzpicture}[anchorbase]
            \opendot{0,-0.2};
            \draw (-0.2,0.4) -- (0,0.2) -- (0.2,0.4);
            \draw (0,-0.2) -- (0,0.2);
            \draw (0.2,0.4) -- (0.4,0.6) -- (0.6,0.4);
            \opendot{0.4,1.0};
            \draw (0.4,0.6) -- (0.4,1.0);
            \draw (-0.2,0.4) -- (-0.2,1.2);
            \draw (0.6,-0.4) -- (0.6,0.4);
        \end{tikzpicture}
        =
        \begin{tikzpicture}[anchorbase]
            \draw (0,-0.4) -- (0,1.2);
        \end{tikzpicture}
        =
        \begin{tikzpicture}[anchorbase]
            \opendot{0.4,-0.2};
            \draw (0.2,0.4) -- (0.4,0.2) -- (0.6,0.4);
            \draw (0.4,-0.2) -- (0.4,0.2);
            \draw (-0.2,0.4) -- (0,0.6) -- (0.2,0.4);
            \opendot{0,1.0};
            \draw (0,0.6) -- (0,1.0);
            \draw (0.6,0.4) -- (0.6,1.2);
            \draw (-0.2,-0.4) -- (-0.2,0.4);
        \end{tikzpicture}
        \ 
        \\ \label{GPC2}
        \begin{tikzpicture}[anchorbase]
            \opendot{0,-0.2};
            \draw (-0.2,0.4) -- (0,0.2) -- (0.2,0.4);
            \draw (-0.2,0.4) -- (0,0.6) -- (0.2,0.4);
            \draw (-0.2,1.2) -- (0,1.0) -- (0.2,1.2);
            \draw (0,-0.2) -- (0,0.2);
            \draw (0,0.6) -- (0,1.0);
        \end{tikzpicture}
        =
        \begin{tikzpicture}[anchorbase]
            \draw (-0.2,0.4) -- (0,0.2) -- (0.2,0.4);
            \draw (0,-0.2) -- (0,0.2);
            \opendot{0,-0.2};
        \end{tikzpicture}
        \ , \quad
        \begin{tikzpicture}[anchorbase]
            \opendot{0,-0.2};
            \draw (-0.2,0.4) -- (0,0.2) -- (0.2,0.4);
            \draw (0,-0.2) -- (0,0.2);
            \draw (0.2,0.4) -- (0.4,0.6) -- (0.6,0.4);
            \draw (0.2,1.2) -- (0.4,1.0) -- (0.6,1.2);
            \draw (0.4,0.6) -- (0.4,1.0);
            \draw (-0.2,0.4) -- (-0.2,1.2);
            \draw (0.6,-0.4) -- (0.6,0.4);
        \end{tikzpicture}
        =
        \begin{tikzpicture}[anchorbase]
            \opendot{0.4,-0.2};
            \draw (0.2,0.4) -- (0.4,0.2) -- (0.6,0.4);
            \draw (0.4,-0.2) -- (0.4,0.2);
            \draw (-0.2,0.4) -- (0,0.6) -- (0.2,0.4);
            \draw (-0.2,1.2) -- (0,1.0) -- (0.2,1.2);
            \draw (0,0.6) -- (0,1.0);
            \draw (0.6,0.4) -- (0.6,1.2);
            \draw (-0.2,-0.4) -- (-0.2,0.4);
        \end{tikzpicture}
        \ , 
        \\ \label{GPC3}
        \begin{tikzpicture}[anchorbase]
            \draw (-0.2,-0.4) \braidto (0.2,0) \braidto (-0.2,0.4);
            \draw (0.2,-0.4) \braidto (-0.2,0) \braidto (0.2,0.4);
        \end{tikzpicture}
        =
        \begin{tikzpicture}[anchorbase]
            \draw (-0.2,-0.4) -- (-0.2,0.4);
            \draw (0.2,-0.4) -- (0.2,0.4);
        \end{tikzpicture}
        \ ,\quad
        \begin{tikzpicture}[anchorbase]
            \draw (-0.4,-0.4) -- (0.4,0.4);
            \draw (0.4,-0.4) -- (-0.4,0.4);
            \draw (0,-0.4) \braidto (-0.4,0) \braidto (0,0.4);
        \end{tikzpicture}
        =
        \begin{tikzpicture}[anchorbase]
            \draw (-0.4,-0.4) -- (0.4,0.4);
            \draw (0.4,-0.4) -- (-0.4,0.4);
            \draw (0,-0.4) \braidto (0.4,0) \braidto (0,0.4);
        \end{tikzpicture}
        \ ,
        \\ \label{GPC4}
        \begin{tikzpicture}[anchorbase]
          \draw (-0.2,-0.2) -- (0.2,0.2) --(0,0.4)-- (0,0.8);
          \draw (0.2,-0.2) -- (-0.2,0.2) -- (0,0.4);
          \opendot{0,0.8};
        \end{tikzpicture}
        =
        \begin{tikzpicture}[anchorbase]
            \draw (-0.2,-0.4) -- (0,-0.2) -- (0.2,-0.4);
            \draw (0,-0.2) -- (0,0.2);
            \opendot{0,0.2};
        \end{tikzpicture}
        \ ,\quad
        \begin{tikzpicture}[anchorbase]
          \draw (-0.2,-0.2) -- (0.2,0.2) --(0,0.4)-- (0,0.8);
          \draw (0.2,-0.2) -- (-0.2,0.2) -- (0,0.4);
          \draw (-0.2,1.0) -- (0,0.8) -- (0.2,1.0);
        \end{tikzpicture}
        =
        \begin{tikzpicture}[anchorbase]
            \draw (-0.2,-0.4) -- (0,-0.2) -- (0.2,-0.4);
            \draw (-0.2,0.4) -- (0,0.2) -- (0.2,0.4);
            \draw (0,-0.2) -- (0,0.2);
        \end{tikzpicture}
        \ ,\quad
        \begin{tikzpicture}[anchorbase]
            \draw (-0.2,-0.2) -- (-0.2,0.2);
            \draw (0.2,-0.2) -- (0.6, 0.2);
            \draw (0.6,-0.2) -- (0.2, 0.2);
            \draw (-0.2,0.2) -- (0.2, 0.6);
            \draw ( 0.2, 0.2) -- (-0.2, 0.6);
            \draw (0.6, 0.2) -- (0.6, 0.6);
            \draw (-0.2, 0.6) -- (-0.2,1.4);
            \draw (0.2, 0.6) -- (0.4, 0.8) -- (0.6, 0.6);
            \draw (0.4, 0.8) -- (0.4, 1.2);
            \draw (0.2, 1.4) -- (0.4, 1.2) -- (0.6, 1.4);
        \end{tikzpicture}
        =
        \begin{tikzpicture}[anchorbase]
            \draw (-0.2,-0.2) -- (-0.2,0.2);
            \draw (0.2,-0.2) -- (0.6, 0.2);
            \draw (0.6,-0.2) -- (0.2, 0.2);
            \draw (-0.2,0.2) -- (0.2, 0.6);
            \draw ( 0.2, 0.2) -- (-0.2, 0.6);
            \draw (0.6, 0.2) -- (0.6, 0.6);
            \draw (0.6,-0.2) -- (0.6,-1.0);
            \draw (-0.2, -0.2) -- (0, -0.4) -- (0.2, -0.2);
            \draw (0, -0.4) -- (0,-0.8);
            \draw (-0.2, -1.0) -- (0,-0.8) -- (0.2, -1.0);
        \end{tikzpicture}
        \ ,\quad
        \begin{tikzpicture}[anchorbase]
            \draw (-0.2,-0.2) -- (-0.2,0.2);
            \draw (0.2,-0.2) -- (0.6, 0.2);
            \draw (0.6,-0.2) -- (0.2, 0.2);
            \draw (-0.2,0.2) -- (0.2, 0.6);
            \draw ( 0.2, 0.2) -- (-0.2, 0.6);
            \draw (0.6, 0.2) -- (0.6, 0.6);
            \draw (-0.2, 0.6) -- (-0.2,1.4);
            \draw (0.2, 0.6) -- (0.4, 0.8) -- (0.6, 0.6);
            \draw (0.4, 0.8) -- (0.4, 1.2);
            \opendot{0.4, 1.2};
        \end{tikzpicture}
        =
        \begin{tikzpicture}[anchorbase]
            \draw (-0.2,0) -- (0,0.2) -- (0.2,0);
            \draw (0,0.2) -- (0, 0.6);
            \opendot{0,0.6};
            \draw (0.4,0) -- (0.4,0.6);
        \end{tikzpicture}.
        \ 
    \end{gather}
\end{defi}

%

\begin{thm} \label{Functor G tilde}
    The categories $\underline{\text{Rep}}_{0}(H_{t})$ and $\text{Par}_{t}$ are isomorphic as $\C$-linear symmetric monoidal categories.
\end{thm}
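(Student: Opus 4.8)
The plan is to follow the same strategy as in the proof of Theorem~\ref{Functor H tilde} (the Likeng--Savage presentation theorem), adapted to the ``even, uncoloured'' setting, by constructing an isomorphism $\tilde G\colon \text{Par}_{t}\to \underline{\text{Rep}}_{0}(H_{t})$ directly. First I would define $\tilde G$ on objects by $W^{\otimes k}\mapsto [k]$, which is a bijection on objects, and on the generating morphisms by sending the four-legs generator to $\FOURLEGS=\{1,2,1',2'\}$, the cap to $\CAP=\{\{1,2\}\}$, the cup to $\CAP^{*}=\{\{1',2'\}\}$, the crossing to $\CROSS=\{\{1,2'\},\{2,1'\}\}$, and $\idstrand$ to $\idstrand=\{\{1,1'\}\}$, extending $\C$-linearly and monoidally. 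To see this is well defined one checks that these images satisfy the relations (GPC1)--(GPC4) of Definition~\ref{generator relations repH0 definition} together with their horizontal and vertical reflections. Each is an elementary diagrammatic identity in $\underline{\text{Rep}}_{0}(H_{t})$: the idempotency $\FOURLEGS\circ\FOURLEGS=\FOURLEGS$ and the associativity/commutativity relations hold because vertically concatenating the corresponding single-block even partitions again produces a single block through which no closed loop is created; the relation ``circle $=t$'' is the computation $\CAP\circ\CAP^{*}=t^{1}\cdot(\text{empty partition})$ coming from the composition rule $q\circ p=t^{l(q,p)}qp$ with exactly one loop; and the relations (GPC3) merely record that $\CROSS$ is an involution satisfying the braid relation, i.e.\ that it implements the $S_{n}$-action on $[n]$ and makes $\text{Par}_{t}$ symmetric monoidal. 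Hence $\tilde G$ is a well-defined strict $\C$-linear symmetric monoidal functor.

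Next I would prove fullness. By Proposition~\ref{even partition sn tn form} every even partition is obtained from $\CROSS$, $\FOURLEGS$, $\idstrand$ and $\CAP$ by applying involution, horizontal concatenation and vertical concatenation. Since $\CROSS$, $\FOURLEGS$ and $\idstrand$ are self-dual and $\CAP^{*}=\CUP$ is itself one of the generating morphisms of $\text{Par}_{t}$, the involution operation only ever produces generators; as $\tilde G$ is strict monoidal and a functor, it covers horizontal concatenation ($\otimes$) and vertical concatenation ($\star$). Therefore every even partition in $P_{even}(k,l)$ lies in the image of $\Hom_{\text{Par}_{t}}(W^{\otimes k},W^{\otimes l})$, and $\tilde G$ is full.

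Finally, faithfulness follows from a normal-form/dimension argument. Fullness gives a surjection $\Hom_{\text{Par}_{t}}(W^{\otimes k},W^{\otimes l})\twoheadrightarrow\C P_{even}(k,l)$, hence $\dim\Hom_{\text{Par}_{t}}(W^{\otimes k},W^{\otimes l})\ge |P_{even}(k,l)|$. For the reverse inequality one uses the defining relations to rewrite an arbitrary word in the generators as a $\C$-linear combination of normal forms modelled on Proposition~\ref{partition can be written by permutation partitions}: a layer of cups at the top, a permutation of $\CROSS$'s realising $\phi(\sigma)$, a horizontal concatenation of ``through-blocks'' built from $\FOURLEGS$'s and $\idstrand$'s, a permutation $\phi(\rho)$, and a layer of caps at the bottom; each closed loop produced during the rewriting is replaced by the scalar $t$ via ``circle $=t$'', matching $q\circ p=t^{l(q,p)}qp$. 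One then checks that these normal forms biject with $P_{even}(k,l)$, so there are at most $|P_{even}(k,l)|$ of them, which together with the previous inequality forces $\tilde G$ to be an isomorphism on every Hom-space, hence an isomorphism of $\C$-linear symmetric monoidal categories.

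I expect the main obstacle to be this last normal-form reduction: verifying that (GPC1)--(GPC4) and their reflections really do suffice to bring every word in the generators to the standard form above without collapsing the spanning set. This is a confluence-type argument whose key intermediate steps are (a) that a stack of $\FOURLEGS$'s can be fused into a single idempotent ``block projector'' on any subset of strands, using the idempotency and associativity relations, and (b) that every generator commutes suitably with $\CROSS$ so that all permutation parts can be collected into two layers. This parallels the (uncoloured) step in the proof of \cite[Theorem~4.4]{Nyobe_Likeng_2021}, with even partitions playing the role that arbitrary partitions play there, and I would carry it out by the same kind of diagrammatic induction.
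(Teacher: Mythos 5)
Your proposal is correct and follows essentially the same route as the paper: the same assignment of generators to $\CROSS$, $\FOURLEGS$, $\CAP$, $\CUP$ and $\idstrand$, fullness via Proposition \ref{even partition sn tn form}, and faithfulness by a rewriting argument that brings every word in the generators to a normal form $\phi(\sigma)\circ(\text{blocks})\circ\phi(\rho)$ in bijection with $P_{even}(k,l)$. The paper phrases the last step as transforming each preimage of a fixed even partition into a canonical one (its Steps 1--4 are exactly your confluence argument of fusing block projectors and sliding crossings), which is equivalent to your spanning-set dimension count.
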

\begin{proof}

Define the functor $\tilde{G}: \text{Par}_{t} \to \underline{\text{Rep}}_{0}(H_{t})$ on objects by $W^{\otimes k} \mapsto [k]$. This makes it bijective on the objects. On the morphisms we define it by sending:

\begin{gather*}
        \begin{tikzpicture}[anchorbase]
            \draw (-0.2,-0.4) -- (0,-0.2) -- (0.2,-0.4);
            \draw (-0.2,0.4) -- (0,0.2) -- (0.2,0.4);
            \draw (0,-0.2) -- (0,0.2);
        \end{tikzpicture} \mapsto
        \FOURLEGS
        \ ,\quad
        \begin{tikzpicture}[anchorbase]
            \draw (-0.2,-0.4) -- (0,-0.2) -- (0.2,-0.4);
            \draw (0,-0.2) -- (0,0.2);
            \opendot{0,0.2};
        \end{tikzpicture} \mapsto
        \CAP
        \ ,\quad
        \crossing \mapsto
        \CROSS
        \ ,\quad
        \begin{tikzpicture}[anchorbase]
            \draw (-0.2,0.4) -- (0,0.2) -- (0.2,0.4);
            \draw (0,-0.2) -- (0,0.2);
            \opendot{0,-0.2};
        \end{tikzpicture} \mapsto
        \CUP
        \ 
    \end{gather*}
and extending this choice $\C$-linearly. The fact that this choice is well-defined can be seen by checking that the relations of Definition \ref{generator relations repH0 definition} are preserved by $\tilde{G}$ and this amounts to the fact that the partition diagrams are the same when the parts are the same. The fullness of $\tilde{G}$ follows directly from Proposition \ref{even partition sn tn form}, so we are left to prove the faithfulness of $\tilde{G}$.

To prove the faithfulness we proceed as follows: We fix for every even partition $p$ some canonical preimage $f_{p}$ under $\tilde{G}$. We then show that every morphism $g$ in $\text{Par}_{t}$ with $\tilde{G}(g)=p$ can be transformed in $f_{p}$ using the relations in \ref{generator relations repH0 definition}.

Let $p$ be an even partition. We fix a normal form $\phi(\sigma) \circ p' \circ \phi(\rho)$ of $p$, see Proposition \ref{partition can be written by permutation partitions} The non-crossing form $p'$ can be written as a tensor product of even blocks $B_{1}\otimes \cdots \otimes B_{r}$. We construct canonical preimages of the permutation partitions and of the even blocks. 
Preimages of the permutation partitions can be build using only the morphisms $\CROSS$ and $\idstrand$ . One could for example compose morphisms which correspond to the transpositions of the symmetric groups. Note that it doesn't matter how we build the preimages of the permutation partitions: as long as we only use the morphisms $\CROSS$ and $\idstrand$ to construct our preimages, the relations in (4.8) ensure us that we always get the same morphism in $\text{Par}_{t}$. We denote the preimage of a permutation partition $\phi(\sigma)$ by $\phi'(\sigma).$

Assume that $B_{i}$ is an even block of size $(k,l)$ with $k>l\geqslant 1$. We define morphisms $s_{n}$ and $t_{n}$ by

\begin{center}
\scalebox{0.8}{
\tikzset{every picture/.style={line width=0.75pt}} 

\begin{tikzpicture}[x=0.75pt,y=0.75pt,yscale=-1,xscale=1]

\draw    (67.79,293.19) -- (67.79,282.12) ;
\draw    (54.33,302.61) -- (67.79,293.19) ;
\draw    (67.79,293.19) -- (79.32,302.06) ;
\draw    (67.79,270.84) -- (67.79,282.12) ;
\draw    (54.33,261.25) -- (67.79,270.84) ;
\draw    (67.79,270.84) -- (79.32,261.81) ;
\draw    (92.79,339.19) -- (92.79,328.12) ;
\draw    (79.33,348.61) -- (92.79,339.19) ;
\draw    (92.79,339.19) -- (104.32,348.06) ;
\draw    (92.79,316.84) -- (92.79,328.12) ;
\draw    (79.33,307.25) -- (92.79,316.84) ;
\draw    (92.79,316.84) -- (104.32,307.81) ;
\draw    (168.79,294.19) -- (168.79,283.12) ;
\draw    (155.33,303.61) -- (168.79,294.19) ;
\draw    (168.79,294.19) -- (180.32,303.06) ;
\draw    (168.79,271.84) -- (168.79,283.12) ;
\draw    (155.33,262.25) -- (168.79,271.84) ;
\draw    (168.79,271.84) -- (180.32,262.81) ;
\draw    (140.79,339.19) -- (140.79,328.12) ;
\draw    (127.33,348.61) -- (140.79,339.19) ;
\draw    (140.79,339.19) -- (152.32,348.06) ;
\draw    (140.79,316.84) -- (140.79,328.12) ;
\draw    (127.33,307.25) -- (140.79,316.84) ;
\draw    (140.79,316.84) -- (152.32,307.81) ;
\draw    (55,308.25) -- (54.33,346.25) ;
\draw    (180.33,308.25) -- (180.33,346.25) ;
\draw  [line width=3] [line join = round][line cap = round] (107.33,329.25) .. controls (107.8,329.25) and (108.33,328.72) .. (108.33,328.25) ;
\draw  [line width=3] [line join = round][line cap = round] (116.33,329.25) .. controls (116,329.25) and (115.67,329.25) .. (115.33,329.25) ;
\draw  [line width=3] [line join = round][line cap = round] (123.33,329.25) .. controls (123.33,329.25) and (123.33,329.25) .. (123.33,329.25) ;
\draw  [line width=3] [line join = round][line cap = round] (107.33,285.25) .. controls (107.8,285.25) and (108.33,284.72) .. (108.33,284.25) ;
\draw  [line width=3] [line join = round][line cap = round] (116.33,285.25) .. controls (116,285.25) and (115.67,285.25) .. (115.33,285.25) ;
\draw  [line width=3] [line join = round][line cap = round] (123.33,285.25) .. controls (123.33,285.25) and (123.33,285.25) .. (123.33,285.25) ;
\draw    (239.79,293.19) -- (239.79,282.12) ;
\draw    (226.33,302.61) -- (239.79,293.19) ;
\draw    (239.79,293.19) -- (251.32,302.06) ;
\draw    (239.79,270.84) -- (239.79,282.12) ;
\draw    (226.33,261.25) -- (239.79,270.84) ;
\draw    (239.79,270.84) -- (251.32,261.81) ;
\draw    (264.79,339.19) -- (264.79,328.12) ;
\draw    (251.33,348.61) -- (264.79,339.19) ;
\draw    (264.79,339.19) -- (276.32,348.06) ;
\draw    (264.79,316.84) -- (264.79,328.12) ;
\draw    (251.33,307.25) -- (264.79,316.84) ;
\draw    (264.79,316.84) -- (276.32,307.81) ;
\draw    (340.79,294.19) -- (340.79,283.12) ;
\draw    (327.33,303.61) -- (340.79,294.19) ;
\draw    (340.79,294.19) -- (352.32,303.06) ;
\draw    (340.79,271.84) -- (340.79,283.12) ;
\draw    (327.33,262.25) -- (340.79,271.84) ;
\draw    (340.79,271.84) -- (352.32,262.81) ;
\draw    (312.79,339.19) -- (312.79,328.12) ;
\draw    (299.33,348.61) -- (312.79,339.19) ;
\draw    (312.79,339.19) -- (324.32,348.06) ;
\draw    (312.79,316.84) -- (312.79,328.12) ;
\draw    (299.33,307.25) -- (312.79,316.84) ;
\draw    (312.79,316.84) -- (324.32,307.81) ;
\draw    (227,308.25) -- (227.33,346.25) ;
\draw    (375.33,265.25) -- (375.33,300.25) ;
\draw  [line width=3] [line join = round][line cap = round] (279.33,329.25) .. controls (279.8,329.25) and (280.33,328.72) .. (280.33,328.25) ;
\draw  [line width=3] [line join = round][line cap = round] (288.33,329.25) .. controls (288,329.25) and (287.67,329.25) .. (287.33,329.25) ;
\draw  [line width=3] [line join = round][line cap = round] (295.33,329.25) .. controls (295.33,329.25) and (295.33,329.25) .. (295.33,329.25) ;
\draw  [line width=3] [line join = round][line cap = round] (279.33,285.25) .. controls (279.8,285.25) and (280.33,284.72) .. (280.33,284.25) ;
\draw  [line width=3] [line join = round][line cap = round] (288.33,285.25) .. controls (288,285.25) and (287.67,285.25) .. (287.33,285.25) ;
\draw  [line width=3] [line join = round][line cap = round] (295.33,285.25) .. controls (295.33,285.25) and (295.33,285.25) .. (295.33,285.25) ;
\draw    (363.79,338.19) -- (363.79,327.12) ;
\draw    (350.33,347.61) -- (363.79,338.19) ;
\draw    (363.79,338.19) -- (375.32,347.06) ;
\draw    (363.79,315.84) -- (363.79,327.12) ;
\draw    (350.33,306.25) -- (363.79,315.84) ;
\draw    (363.79,315.84) -- (375.32,306.81) ;
\draw    (448.79,293.19) -- (448.33,273.25) ;
\draw    (435.33,302.61) -- (448.79,293.19) ;
\draw    (448.79,293.19) -- (460.32,302.06) ;
\draw    (473.79,339.19) -- (473.79,328.12) ;
\draw    (460.33,348.61) -- (473.79,339.19) ;
\draw    (473.79,339.19) -- (485.32,348.06) ;
\draw    (473.79,316.84) -- (473.79,328.12) ;
\draw    (460.33,307.25) -- (473.79,316.84) ;
\draw    (473.79,316.84) -- (485.32,307.81) ;
\draw    (549.79,294.19) -- (550.33,275.25) ;
\draw    (536.33,303.61) -- (549.79,294.19) ;
\draw    (549.79,294.19) -- (561.32,303.06) ;
\draw    (521.79,339.19) -- (521.79,328.12) ;
\draw    (508.33,348.61) -- (521.79,339.19) ;
\draw    (521.79,339.19) -- (533.32,348.06) ;
\draw    (521.79,316.84) -- (521.79,328.12) ;
\draw    (508.33,307.25) -- (521.79,316.84) ;
\draw    (521.79,316.84) -- (533.32,307.81) ;
\draw    (436,308.25) -- (436.33,346.25) ;
\draw    (584.33,265.25) -- (584.33,300.25) ;
\draw  [line width=3] [line join = round][line cap = round] (488.33,329.25) .. controls (488.8,329.25) and (489.33,328.72) .. (489.33,328.25) ;
\draw  [line width=3] [line join = round][line cap = round] (497.33,329.25) .. controls (497,329.25) and (496.67,329.25) .. (496.33,329.25) ;
\draw  [line width=3] [line join = round][line cap = round] (504.33,329.25) .. controls (504.33,329.25) and (504.33,329.25) .. (504.33,329.25) ;
\draw  [line width=3] [line join = round][line cap = round] (488.33,285.25) .. controls (488.8,285.25) and (489.33,284.72) .. (489.33,284.25) ;
\draw  [line width=3] [line join = round][line cap = round] (497.33,285.25) .. controls (497,285.25) and (496.67,285.25) .. (496.33,285.25) ;
\draw  [line width=3] [line join = round][line cap = round] (504.33,285.25) .. controls (504.33,285.25) and (504.33,285.25) .. (504.33,285.25) ;
\draw    (572.79,338.19) -- (572.79,327.12) ;
\draw    (559.33,347.61) -- (572.79,338.19) ;
\draw    (572.79,338.19) -- (584.32,347.06) ;
\draw    (572.79,315.84) -- (572.79,327.12) ;
\draw    (559.33,306.25) -- (572.79,315.84) ;
\draw    (572.79,315.84) -- (584.32,306.81) ;
\draw   (444.25,269.17) .. controls (444.25,266.91) and (446.08,265.08) .. (448.33,265.08) .. controls (450.59,265.08) and (452.42,266.91) .. (452.42,269.17) .. controls (452.42,271.42) and (450.59,273.25) .. (448.33,273.25) .. controls (446.08,273.25) and (444.25,271.42) .. (444.25,269.17) -- cycle ;
\draw   (546.25,271.17) .. controls (546.25,268.91) and (548.08,267.08) .. (550.33,267.08) .. controls (552.59,267.08) and (554.42,268.91) .. (554.42,271.17) .. controls (554.42,273.42) and (552.59,275.25) .. (550.33,275.25) .. controls (548.08,275.25) and (546.25,273.42) .. (546.25,271.17) -- cycle ;

\draw (59,361.65) node [anchor=north west][inner sep=0.75pt]    {$s_{n} \ \text{for } n\ \text{even} \ $};
\draw (503,362.65) node [anchor=north west][inner sep=0.75pt]    {$t_{n} \ \text{for } n\ \text{odd}. \  $};
\draw (257,362.65) node [anchor=north west][inner sep=0.75pt]    {$s_{n} \ \text{for } n\ \text{odd} \ $};

\end{tikzpicture}
}
\end{center}
 The morphisms $s_{n}$ are of size $(n,n)$ and the morphisms $t_{n}$ are of size $(n,1)$. It follows that
 \begin{align*}
     B_{i}=\tilde{G}(s_{l}\circ (t_{k-l+1} \otimes \underbrace{\idstrand \otimes \ldots \otimes \idstrand\text{ }}_\text{$l-1$ times})).
 \end{align*}  
 
For the cases $0\leqslant k\leqslant l$ and $l=0,k>0$ the preimages are constructed in a similar fashion. We denote the constructed canonical preimage of the even block $B_{i}$ by $B_{i}'$. 

Now let $g \in \text{Hom}_{\text{Par}_{t}}(W^{\otimes k},W^{\otimes l})$ be some morphisms with $\tilde{G}(g)=p$ some even partition. Let $\phi(\sigma) \circ p' \circ \phi(\rho)$ be the fixed normal form for $p$ and $p'=B_{1}\otimes \cdots \otimes B_{r}$, where the $B_{i}$ are even blocks. 
To show that $g$ equals the canonical preimage $\phi'(\sigma) \circ B_{1}'\otimes \cdots \otimes B_{r}' \circ \phi'(\rho)$ of $p$ in $\text{Par}_{t}$ we first note that $g=\phi'(\sigma) \circ \phi'(\sigma^{-1}) \circ g \circ \phi'(\rho^{-1}) \circ \phi'(\rho)$. 
Using the relations (4.8) and (4.9) we can unknot $\phi'(\sigma^{-1}) \circ g \circ \phi'(\rho^{-1})$, which then equals $g_{1} \otimes \cdots \otimes g_{r}$ with $\tilde{G}(g_{i})=B_{i}$. We are done if we show that $g_{i} = B_{i}'$ in $\text{Par}_{t}$. Assume $B_{i}$ is of size $(k,l)$ with $k>l\geqslant 1.$ The arguments for the other cases are similar. We transform the morphism $g_{i}$ into $B_{i}'$ using the relations of definition \ref{generator relations repH0 definition}.

\textit{Step 1:} We draw the diagram in such a way that we have a middle part consisting only of compositions of the morphisms \idstrand, 
$, it also preserves the swap morphisms. This shows that $\tilde{G}$ is a $\C$-linear symmetric monoidal functor.

\end{proof}

\begin{rem}
     The isomorphism $\tilde{G}$ induces an isomorphism $\tilde{G}^{Kar}: (\text{Par}_{t})^{Kar} \to\underline{\text{Rep}}(H_t).$
\end{rem}

The theorem implies that $\text{Par}_{t}$ has the structure of a $\C$-linear spherical rigid symmetric monoidal category and that $\underline{\text{Rep}}_{0}(H_t)$ satisfies the following universal property.

\begin{thm} \label{real Universal property fo RepH0, not Karoubian universal prop}
    Let $t\in \C$. Let $\mathcal{C}$ be a $\C$-linear symmetric monoidal category with a $t$-dimensional self-dual rigid object with neutralizer $(A,\alpha,\beta,\delta)$ for an object $A\in Ob(\mathcal{C})$, a neutralizer $\alpha: A\otimes A \to A \otimes A$, an evaluation $\beta: A \otimes A \to \textbf{1}$ and a coevaluation $\delta: \textbf{1} \to A \otimes A$. Let  $\gamma:=s_{A,A}^{\mathcal{C}}: A \otimes A \to A\otimes A$ be the swap morphism. These morphisms satisfy:\\
    1. $A$ is rigid and self-dual
    \begin{align*}
        (id_{A} \otimes \beta) \circ (\delta \otimes id_{A}) = id_{A} = (\beta \otimes id_{A}) \circ (id_{A} \otimes \delta).
    \end{align*}
    2. $A$ has dimension $t$
    \begin{align*}
        \delta \circ \beta =t.
    \end{align*}
     3. $\alpha$ is a neutralizer
    \begin{align*}
        \alpha \circ \alpha &=\alpha,\\
        (id_{A} \otimes \alpha) \circ (\alpha \otimes id_{A}) & = (\alpha \otimes id_{A}) \circ (id_{A} \otimes \alpha).
    \end{align*}
    4. As a rigid object, $A$ is compatible with the neutralizer
    \begin{align*}
        \alpha \circ \delta &= \delta,\\
        \beta \circ \alpha &= \beta,\\
        (id_{A} \otimes \alpha) \circ (\delta \otimes id_{A}) &= (\alpha \otimes id_{A}) \circ (id_{A} \otimes \delta),\\
        (\beta \otimes id_{A}) \circ (id_{A} \otimes \alpha)&= (id_{A} \otimes \beta) \circ (\alpha \otimes id_{A}).
    \end{align*}
 
Then there exists a unique strict $\C$-linear symmetric monoidal functor $\mathcal{G}': \underline{\text{Rep}}_{0}(H_{n}) \to \mathcal{C}$, with $\mathcal{G}'([k])= A^{\otimes k}$ for all $k\in \N$ and  $\mathcal{G}'(\tilde{G}(\begin{tikzpicture}[anchorbase]
            \draw (-0.2,-0.4) -- (0,-0.2) -- (0.2,-0.4);
            \draw (-0.2,0.4) -- (0,0.2) -- (0.2,0.4);
            \draw (0,-0.2) -- (0,0.2);
        \end{tikzpicture}))= \alpha$, $ \mathcal{G}'(\tilde{G}(\begin{tikzpicture}[anchorbase]
            \draw (-0.2,-0.4) -- (0,-0.2) -- (0.2,-0.4);
            \draw (0,-0.2) -- (0,0.2);
            \opendot{0,0.2};
        \end{tikzpicture})) =\beta$, $ \mathcal{G}'(\tilde{G}(\CROSS))= \gamma$ and $\mathcal{G}'(\tilde{G}(\begin{tikzpicture}[anchorbase]
            \draw (-0.2,0.4) -- (0,0.2) -- (0.2,0.4);
            \draw (0,-0.2) -- (0,0.2);
            \opendot{0,-0.2};
        \end{tikzpicture}))= \delta$. Furthermore, if $\mathcal{C}$ is Karoubi, there exists an up to isomorphism unique $\C$-linear symmetric monoidal functor $\mathcal{G}: \underline{\text{Rep}}(H_{t}) \to \mathcal{C}$ with $\mathcal{G}'=\mathcal{G} \circ \iota_{G}$.
\end{thm}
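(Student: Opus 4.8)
The plan is to mimic the proof of Theorem~\ref{real universal property Par Z2}, now using the presentation of $\underline{\text{Rep}}_0(H_t)$ afforded by Theorem~\ref{Functor G tilde}. First I would define a strict $\C$-linear monoidal functor $\mathcal{G}'' \colon \text{Par}_t \to \mathcal{C}$ on the single generating object by $\mathcal{G}''(W) := A$ and on the four non-trivial generating morphisms of Definition~\ref{generator relations repH0 definition} by
\begin{gather*}
\mathcal{G}''(\begin{tikzpicture}[anchorbase]
            \draw (-0.2,-0.4) -- (0,-0.2) -- (0.2,-0.4);
            \draw (-0.2,0.4) -- (0,0.2) -- (0.2,0.4);
            \draw (0,-0.2) -- (0,0.2);
        \end{tikzpicture}) := \alpha, \qquad
\mathcal{G}''(\begin{tikzpicture}[anchorbase]
            \draw (-0.2,-0.4) -- (0,-0.2) -- (0.2,-0.4);
            \draw (0,-0.2) -- (0,0.2);
            \opendot{0,0.2};
        \end{tikzpicture}) := \beta, \\
\mathcal{G}''(\CROSS) := \gamma, \qquad
\mathcal{G}''(\begin{tikzpicture}[anchorbase]
            \draw (-0.2,0.4) -- (0,0.2) -- (0.2,0.4);
            \draw (0,-0.2) -- (0,0.2);
            \opendot{0,-0.2};
        \end{tikzpicture}) := \delta,
\end{gather*}
extended $\C$-linearly and by the monoidal structure, so that $W^{\otimes k} \mapsto A^{\otimes k}$.

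The heart of the argument is to check that this assignment respects every defining relation of Definition~\ref{generator relations repH0 definition}, i.e. (4.6)--(4.9) together with all of their horizontal and vertical reflections. Since $\mathcal{C}$ is symmetric monoidal and $\gamma = s^{\mathcal{C}}_{A,A}$, the two relations of (4.8) --- which involve only $\CROSS$ and $\idstrand$ --- hold automatically from naturality of the symmetry and the Yang--Baxter/hexagon identity. The remaining relations translate termwise into the hypotheses of the theorem: the ``zig-zag'' relations give hypothesis~1 (rigidity and self-duality), the closed-loop relation gives hypothesis~2 ($\delta \circ \beta = t$), the idempotency and commutation relations for the four-legged generator give hypothesis~3, and the relations intertwining that generator with $\CROSS$, the cap and the cup give hypothesis~4. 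As in Theorem~\ref{real universal property Par Z2}, the reflected relations follow formally from the listed ones once $\mathcal{C}$ is symmetric monoidal and $A$ is rigid --- this is the analogue of \cite[Proposition~4.3]{Nyobe_Likeng_2021}, using evaluation and coevaluation to bend strands --- so it suffices to verify the ones written down. Granting this, $\mathcal{G}''$ is a well-defined strict $\C$-linear monoidal functor, and it is symmetric monoidal because it sends $\CROSS$ --- which generates the symmetry of $\text{Par}_t$, as observed at the end of the proof of Theorem~\ref{Functor G tilde} --- to $\gamma = s^{\mathcal{C}}_{A,A}$.

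I would then set $\mathcal{G}' := \mathcal{G}'' \circ \tilde{G}^{-1} \colon \underline{\text{Rep}}_0(H_t) \to \mathcal{C}$, using the isomorphism $\tilde{G}$ of Theorem~\ref{Functor G tilde}. By construction $\mathcal{G}'([k]) = A^{\otimes k}$ and $\mathcal{G}'$ takes the prescribed values on the images under $\tilde G$ of the four generators. Uniqueness is immediate: any strict $\C$-linear symmetric monoidal functor with these values agrees with $\mathcal{G}'$ on all objects and on a generating set of morphisms, hence on all of $\underline{\text{Rep}}_0(H_t)$. Finally, since $\underline{\text{Rep}}(H_t) = (\underline{\text{Rep}}_0(H_t))^{Kar}$, the universal property of the Karoubian envelope (Definition~\ref{THE definition of Karoubian envelope and embedding}) produces a $\C$-linear symmetric monoidal functor $\mathcal{G} \colon \underline{\text{Rep}}(H_t) \to \mathcal{C}$ with $\mathcal{G}' = \mathcal{G} \circ \iota_G$, unique up to isomorphism, exactly as in the last sentence of the proof of Theorem~\ref{real universal property Par Z2}.

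The main obstacle is the relation check in the second step: matching the pictorial relations (4.9), which intertwine the neutralizer $\alpha$ with the crossing and with the cap and cup, against the four equations of hypothesis~4, and confirming that the horizontal/vertical reflections of the listed relations really are consequences of them. Everything else --- well-definedness of the monoidal extension, the passage through $\tilde G$, uniqueness, and the Karoubi step --- is formal and parallels Theorem~\ref{real universal property Par Z2}.
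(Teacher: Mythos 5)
Your proposal is essentially the paper's own proof: define a strict monoidal functor $\mathcal{G}''$ on the presented category $\text{Par}_t$ by sending the four generators to $\alpha,\beta,\gamma,\delta$, verify the relations of Definition~\ref{generator relations repH0 definition} against hypotheses 1--4 (with (4.8)/(4.9) handled via the symmetry of $\mathcal{C}$), set $\mathcal{G}'=\mathcal{G}''\circ\tilde{G}^{-1}$, and invoke the universal property of the Karoubian envelope for $\mathcal{G}$. Your version is in fact slightly more explicit than the paper's about which pictorial relation matches which hypothesis and about the role of the reflected relations, but the route is the same.
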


\begin{proof}
    We define the functor $\mathcal{G}'': \text{Par}_{t} \to \mathcal{C}$ on objects by $\mathcal{G}''(W^{\otimes k}):= A^{\otimes k}$ and on morphisms by the $\C$-linear extension of
    $\mathcal{G}''(\begin{tikzpicture}[anchorbase]
            \draw (-0.2,-0.4) -- (0,-0.2) -- (0.2,-0.4);
            \draw (-0.2,0.4) -- (0,0.2) -- (0.2,0.4);
            \draw (0,-0.2) -- (0,0.2);
        \end{tikzpicture}):= \alpha, \mathcal{G}''(\begin{tikzpicture}[anchorbase]
            \draw (-0.2,-0.4) -- (0,-0.2) -- (0.2,-0.4);
            \draw (0,-0.2) -- (0,0.2);
            \opendot{0,0.2};
        \end{tikzpicture}) :=\beta, \mathcal{G}''(\CROSS):= \gamma$ and $\mathcal{G}''(\begin{tikzpicture}[anchorbase]
            \draw (-0.2,0.4) -- (0,0.2) -- (0.2,0.4);
            \draw (0,-0.2) -- (0,0.2);
            \opendot{0,-0.2};
        \end{tikzpicture}):= \delta$. Because $\mathcal{C}$ is a symmetric monoidal category the relations (4.8) and (4.9) hold. A direct verification shows that all the necessary relations are satisfied. This shows that the functor $\mathcal{G}''$ is a well-defined strict $\C$-linear symmetric monoidal functor. We use Theorem \ref{Functor G tilde} to define the strict $\C$-linear symmetric monoidal functor $\mathcal{G}' := \mathcal{G}'' \circ \tilde{G}^{-1}.$ If $\mathcal{C}$ is Karoubi, the universal property of the Karoubian envelope implies the existence of the functor $\mathcal{G}$.
\end{proof}

The following corollary can be proven similarly to the analogous statement for the permutation category.

\begin{cor} \label{Functor categories for Ht}
    Let $t\in \C$ and $\mathcal{C}$ be a $\C$-linear symmetric monoidal Karoubi category. Then there is an equivalence
    \begin{align*}
        \text{Fun}^{\otimes,Symm}_{\C}(\underline{\text{Rep}}(H_{t}),\mathcal{C}) \simeq \text{Rig}_{\C}^{sd,neutr}(\mathcal{C},t)
    \end{align*}
    between the category $\text{Fun}^{\otimes,Symm}_{\C}(\underline{\text{Rep}}(H_{t}),\mathcal{C})$ of $\C$-linear symmetric monoidal functors from $\underline{\text{Rep}}(H_{t})$ to $\mathcal{C}$ with natural isomorphisms between them, and the subcategory $\text{Rig}_{\C}^{sd,neutr}(\mathcal{C},t)$ of $\mathcal{C}$ consisting of $t$-dimensional rigid self-dual objects with neutralizer and their isomorphisms.
\end{cor}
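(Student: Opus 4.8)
The plan is to run the proof of Corollary \ref{functors category for Parz2} essentially verbatim, with Theorem \ref{real universal property Par Z2} replaced by Theorem \ref{real Universal property fo RepH0, not Karoubian universal prop} and $\text{Par}(\Z_{2},t)^{Kar}$ replaced by $\underline{\text{Rep}}(H_{t})$.

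First I would define the evaluation functor
\begin{align*}
    E\colon \text{Fun}^{\otimes,Symm}_{\C}(\underline{\text{Rep}}(H_{t}),\mathcal{C}) \to \text{Rig}_{\C}^{sd,neutr}(\mathcal{C},t),
\end{align*}
sending a $\C$-linear symmetric monoidal functor $\mathcal{G}$ to the object $\mathcal{G}([1])$ equipped with the neutralizer, evaluation and coevaluation obtained by applying $\mathcal{G}$ to the images under $\tilde{G}$ of the three non-identity generating morphisms (post-composed, as usual, with the monoidal coherence isomorphisms of $\mathcal{G}$), and sending a natural isomorphism $\kappa$ to its component $\kappa_{[1]}$. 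To see that $E$ is well defined one observes that $[1]\in\underline{\text{Rep}}(H_{t})$, equipped with these structure morphisms, is itself a $t$-dimensional self-dual rigid object with neutralizer: axioms (1)--(4) of Theorem \ref{real Universal property fo RepH0, not Karoubian universal prop} are simply reformulations of relations that hold in $\text{Par}_{t}\cong\underline{\text{Rep}}_{0}(H_{t})$ (Theorem \ref{Functor G tilde}). Since a $\C$-linear symmetric monoidal functor preserves equalities of morphisms, $\mathcal{G}([1])$ is again such an object, and naturality of $\kappa$ forces $\kappa_{[1]}$ to commute with the structure morphisms, so it is a morphism in $\text{Rig}_{\C}^{sd,neutr}(\mathcal{C},t)$.

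Next I would construct a quasi-inverse $R$. Given $A\in\text{Rig}_{\C}^{sd,neutr}(\mathcal{C},t)$, Theorem \ref{real Universal property fo RepH0, not Karoubian universal prop} supplies an up-to-isomorphism unique $\C$-linear symmetric monoidal functor $\mathcal{G}^{A}\colon\underline{\text{Rep}}(H_{t})\to\mathcal{C}$ with $\mathcal{G}^{A}([1])=A$ realizing the structure morphisms; I would fix once and for all the explicit model $(X,e)\mapsto(\mathcal{G}^{A}X,\mathcal{G}^{A}e)$ used in the proof of Proposition \ref{Karoubian envelope full, faithful}. For an isomorphism $\kappa\colon A\to B$ I would first define a natural transformation $\mathcal{G}^{\kappa,0}\colon\mathcal{G}^{A}\circ\iota_{G}\to\mathcal{G}^{B}\circ\iota_{G}$ of functors $\underline{\text{Rep}}_{0}(H_{t})\to\mathcal{C}$ by $\mathcal{G}^{\kappa,0}_{[k]}:=\kappa^{\otimes k}$; its naturality reduces, via Theorem \ref{Functor G tilde}, to commutativity of
\begin{equation*}
\begin{tikzcd}
A^{\otimes k} \arrow{r}{\mathcal{G}^{A}(f)} \arrow[d,"\kappa^{\otimes k}"] & A^{\otimes l} \arrow{d}{\kappa^{\otimes l}} \ \\  B^{\otimes k} \arrow{r}{\mathcal{G}^{B}(f)} & B^{\otimes l}
\end{tikzcd}
\end{equation*}
for $f$ one of the four generating morphisms, which is exactly the statement that $\kappa$ respects the neutralizer, evaluation, coevaluation and swap. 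By the universal property of the Karoubian envelope, $\mathcal{G}^{\kappa,0}$ extends uniquely to a natural isomorphism $\mathcal{G}^{\kappa}\colon\mathcal{G}^{A}\to\mathcal{G}^{B}$, and uniqueness then yields $\mathcal{G}^{\kappa'\kappa}=\mathcal{G}^{\kappa'}\mathcal{G}^{\kappa}$ and $\mathcal{G}^{\id_{A}}=\id$, so $A\mapsto\mathcal{G}^{A}$, $\kappa\mapsto\mathcal{G}^{\kappa}$ defines a functor $R$.

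Finally I would check that $E$ and $R$ are mutually quasi-inverse. One has $E\circ R\cong\id$: for each $A$ the object $\mathcal{G}^{A}([1])$ with its induced structure is $A$, and $\kappa\mapsto\kappa^{\otimes1}=\kappa$. For $R\circ E\cong\id$: given $\mathcal{G}$, the functors $\mathcal{G}$ and $\mathcal{G}^{\mathcal{G}([1])}$ agree on the generating object and on all four generating morphisms, hence on $\underline{\text{Rep}}_{0}(H_{t})$, hence are isomorphic on $\underline{\text{Rep}}(H_{t})$ by the uniqueness clause in the Karoubian universal property, and this isomorphism is natural in $\mathcal{G}$. This establishes the equivalence. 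As in the permutation case, the only genuine work is the bookkeeping forced by the ``up to isomorphism'' clause of Theorem \ref{real Universal property fo RepH0, not Karoubian universal prop}: one must commit to a single explicit construction of $\mathcal{G}^{A}$ so that $A\mapsto\mathcal{G}^{A}$ is strictly functorial in $\kappa$ and $E\circ R$ is in fact the identity functor; the remaining verifications are routine diagram chases.
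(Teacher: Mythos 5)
Your proposal is correct and takes essentially the same approach as the paper, which itself only remarks that the corollary "can be proven similarly to the analogous statement for the permutation category" (Corollary \ref{functors category for Parz2}); your transcription of that argument, with Theorem \ref{real Universal property fo RepH0, not Karoubian universal prop} in place of Theorem \ref{real universal property Par Z2} and with explicit attention to the up-to-isomorphism clause of the Karoubian extension, is exactly what is intended. (Only a cosmetic slip: there are four non-identity generators of $\text{Par}_{t}$, but as you implicitly use, the crossing carries no extra structure since it must map to the symmetric braiding of $\mathcal{C}$, leaving the three structure morphisms $\alpha,\beta,\delta$.)
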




\section{An equivalence of symmetric monoidal categories}

In this section we want to relate the different interpolation categories. We used the sets of partition $P_{even}\subset P \subset P_{\Z_{2}}$ to define the morphism spaces. This leads to functors $\Phi: \underline{\text{Rep}}(H_{t}) \to \underline{\text{Rep}}(S_{t})$ and $\Psi: \underline{\text{Rep}}(S_{t})\to \text{Par}(\Z_{2},t)$. Their composition $\Psi \circ \Phi: \underline{\text{Rep}}(S_{t}) \to \text{Par}(\Z_{2},t)$ however will not be so interesting from a representation theoretical point of view (these are not equivalences, nor are they compatible with semisimplification). For later use in Section \ref{sec:ss} we first recall the relationship of the interpolation categories to $Rep(H_n)$.

\subsection{Relation to the representations of $H_n$}

In the $S_t$-case we have the following description of the relation between morphisms in the Deligne category and the usual morphisms between tensor powers of the permutation representation of $S_n$ \cite[Theorem 2.6]{Comes_2011}.

\begin{defi}
Let $e_{1},\ldots, e_{n}$ be the canonical basis for $u'=\C^{n}$. For all $k,l\in \N$ we define a $\C$-linear map by its image on the basis elements:

\begin{align*}
    T: \C P(k,l) &\to \text{Hom}_{S_{n}}((\mathbb{C}^{n})^{\otimes k},(\mathbb{C}^{n})^{\otimes l})\\
    p  &\mapsto  T_{p}
\end{align*}
where 
\begin{align*}
    T_{p}: (\mathbb{C}^{n})^{\otimes k} &\to (\mathbb{C}^{n})^{\otimes l}\\
       e_{i_{1}}\otimes\ldots \otimes e_{i_{k}} &\mapsto \sum_{1\leqslant j_{1}\ldots j_{l}\leqslant n} \delta_{p}(i,j) (e_{j_{1}}\otimes\ldots \otimes e_{j_{l}}).
\end{align*}
for all $1\leqslant i_{1},\ldots, i_{k} \leqslant n$. We use the abbreviation $i=(i_{1},\ldots, i_{k})$ and  $j=(j_{1},\ldots, j_{l})$. We can label the partition diagram of $p$ by $i$ and $j$ in an obvious way: $i$ labels th lower row from left to right and $j$ the upper row from left to right. By definition $\delta_{p}(i,j)$ equals 1 if and only if all vertices in the same block of the partition are labeled by the same number. Otherwise it equals $0$. 
\end{defi}

\begin{prop} \label{Functor F is full}
    The linear map $T: \C P(k,l) \to \text{Hom}_{S_{n}}((\mathbb{C}^{n})^{\otimes k},(\mathbb{C}^{n})^{\otimes l})$ is surjective and $T$ is an isomorphism in the cases that $k+l\leqslant n.$
\end{prop}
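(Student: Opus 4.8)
The plan is to imitate the classical Schur--Weyl argument for the symmetric group (as in \cite{Comes_2011}), adapted to the permutation representation of $H_n$. First I would record the combinatorial input: the set $P(k,l)$ of all partitions of $\{1,\dots,k,1',\dots,l'\}$ indexes a spanning set of $\Hom_{S_n}((\C^n)^{\otimes k},(\C^n)^{\otimes l})$ via the maps $T_p$, because $S_n$ acts on $(\C^n)^{\otimes(k+l)}$ by permuting tensor factors diagonally together with permuting coordinates, and a standard double-centralizer/invariant-theory computation shows that $\Hom_{S_n}((\C^n)^{\otimes k},(\C^n)^{\otimes l})\cong\bigl((\C^n)^{\otimes(k+l)}\bigr)^{S_n}$ has a basis given by the $S_n$-orbits of basis vectors $e_{i_1}\otimes\cdots\otimes e_{i_k}\otimes e_{j_1}\otimes\cdots\otimes e_{j_l}$, and such an orbit is exactly determined by the partition of the index set into fibers of $(i,j)$. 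Each such orbit-sum corresponds to $T_p$ for the partition $p$ whose blocks are these fibers, so surjectivity of $T$ is essentially immediate.

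Next I would address the injectivity claim in the range $k+l\le n$. The key observation is that if $p\ne q$ are distinct partitions of $\{1,\dots,k,1',\dots,l'\}$, then one can choose a labeling $(i,j)\in\{1,\dots,n\}^{k+l}$ which is constant on each block of $p$ but \emph{not} constant on some block of $q$ (or vice versa), provided we have enough labels available: since there are at most $k+l\le n$ vertices, we may assign a distinct value in $\{1,\dots,n\}$ to each block of $p$, realizing $\delta_p(i,j)=1$ while $\delta_q(i,j)=0$. More precisely, to separate a whole linear combination $\sum_p c_p T_p$ from zero I would argue by picking, among the $p$ with $c_p\ne0$, a \emph{minimal} one in the refinement order, label its blocks injectively, and observe that the corresponding matrix entry of $\sum_p c_p T_p$ picks out exactly $c_p$ (any coarser $q$ gives $\delta_q=0$ on that labeling, and finer partitions are excluded by minimality unless they also have a nonzero coefficient, in which case one iterates). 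This shows the matrix $(\delta_p(i,j))_{p,\,(i,j)}$ has full row rank when restricted to labelings using at most $n$ distinct values, hence $T$ is injective, and combined with surjectivity it is an isomorphism for $k+l\le n$.

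The main obstacle I anticipate is getting the injectivity bookkeeping exactly right: one must be careful that the "separating labeling" argument handles an arbitrary linear combination, not just a comparison of two basis elements, and that the bound $k+l\le n$ is used precisely where the injective assignment of labels to blocks is needed (a partition of a $(k+l)$-element set has at most $k+l$ blocks, so $n$ labels always suffice). A clean way to phrase it is: order $P(k,l)$ by refinement, and note that in this order the matrix $\bigl(\delta_p(i,j)\bigr)$, with rows indexed by partitions and columns by labelings that are injective on the blocks of $p$, is block-triangular with nonzero diagonal, hence of full rank; this is the same triangularity trick used in the $S_t$ case. Surjectivity for all $k,l$ and injectivity in the stable range then give both assertions of Proposition~\ref{Functor F is full}.
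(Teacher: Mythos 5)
Your argument is correct and is essentially the standard one: the paper gives no proof of this proposition but cites it from Comes--Ostrik, and the proof there is exactly your orbit-basis plus refinement-order triangularity argument. The only point to tighten is that $T_p$ is not itself an orbit sum but the sum $\sum_{q} x_q$ over all partitions $q$ coarser than $p$ (so surjectivity really uses the unitriangularity/M\"obius inversion you invoke at the end, not just the identification of orbit sums), and in your injectivity step the clause about ``iterating'' over finer partitions with nonzero coefficient is redundant once you have chosen $p$ minimal.
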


It is straightforward to see that restricting this map to even partitions induces a map $T: \C P_{even}(k,l) \to \text{Hom}_{H_{n}}((\mathbb{C}^{n})^{\otimes k},(\mathbb{C}^{n})^{\otimes l})$ which is an isomorphism in case $k + l \leqslant n$ (see also \cite{BS09}).

The morphism spaces between the tensor products of the permutation representation $V=\C^{2n}$ of $H_{n}$ can be described using the set of $\Z_{2}$-coloured partitions $P_{\Z_{2}}$. We alter the notation that was given in \cite[Proposition 5.2]{Nyobe_Likeng_2021} for consistency. 

\begin{defi}
Recall the set of basis elements $\{e_{j}^{i}|1\leqslant i\leqslant n, j\in\{-1,1\}\}$ for $\C^{2n}$. For all $k,l\in \N$ we define a $\C$-linear map by its image on the basis elements:

\begin{align*}
    T: \C P_{\Z_{2}}(k,l) &\to \text{Hom}_{H_{n}}((\mathbb{C}^{2n})^{\otimes k},(\mathbb{C}^{2n})^{\otimes l})\\
    (p,z)  &\mapsto  T_{(p,z)}
\end{align*}
where 
\begin{align*}
    T_{(p,z)}(e_{b_{1}}^{i_{1}}\otimes\ldots \otimes e_{b_{k}}^{i_{k}}) = \sum_{\substack{1 \leqslant j_{1}\ldots j_{l}\leqslant n\\c_{1}\ldots c_{l}\in \Z_{2}}} \delta_{(p,z)}((i,b),(j,c)) (e_{c_{1}}^{j_{1}}\otimes\ldots \otimes e_{c_{l}}^{j_{l}})
\end{align*}
for all $(i,b) \in \{1,\ldots,n\}^{k} \times (\Z_{2})^{k}$. We use the notation $(i,b)=((i_{1},\ldots, i_{k})(b_{1}, \ldots , b_{k}))$ and  $(j,c)=((j_{1},\ldots, j_{l}),(c_{1}, \ldots , c_{l}))$. We can label the partition $p$ by $(i,j)$ and $(b,c)$ in an obvious way: $i$ and $b$ label the lower row of $p$ from left to right and $j$ and $c$ label the upper row from left to right. Then $\delta_{(p,z)}((i,b),(j,c))$ equals 1 if and only if for vertices in the same part of $p$ the following two condition hold:  the corresponding $(i,j)$-labels are the same and the corresponding $(b,c)$-labels multiplied with the corresponding labels of $z$ are the same. Otherwise it equals $0$. 
\end{defi}

The following proposition is a special case of \cite[Theorem 5.4]{Nyobe_Likeng_2021}.

\begin{prop} \label{Functor H is full} 
    The linear map $T: \C P_{\Z_{2}}(k,l) \to \text{Hom}_{H_{n}}((\mathbb{C}^{2n})^{\otimes k},(\mathbb{C}^{2n})^{\otimes l})$ is surjective and is an isomorphism in case $k+l\leqslant n.$
\end{prop}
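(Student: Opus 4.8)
The statement is essentially a known result of Likeng--Savage (their Theorem 5.4), so the plan is to recover it by combining two inputs: the corresponding statement for the \emph{uncoloured} partition case (Proposition \ref{Functor F is full} and its stated even-partition refinement, which gives the description of $\Hom_{H_n}((\C^n)^{\otimes k},(\C^n)^{\otimes l})$ via $\C P_{even}(k,l)$) with the concrete decomposition $V = \C^{2n} = \bigoplus_{i=1}^{n}(\C e_1^i \oplus \C e_{-1}^i)$ as an $H_n$-representation. The key structural observation is that $V$, as a module over the wreath product $H_n = \Z_2 \wr S_n$, is induced from the permutation action: it is the "$\Z_2$-coloured" analogue of $u' = \C^n$, where each of the $n$ strands now carries an extra $\Z_2$-label living in the regular representation $\C\Z_2$. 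This is exactly what the colour vector $z \in (\Z_2)^{k+l}$ records on the diagrammatic side, and the equivalence relation $\simeq$ on coloured partitions precisely captures the ambiguity of which $\Z_2$-element one uses to label a given block.

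\textbf{Surjectivity.} First I would check that $T$ is well-defined on equivalence classes, i.e.\ that $\simeq$-equivalent coloured partitions give the same map $T_{(p,z)}$: multiplying all $(b,c)$-labels and $z$-labels of a single block by a fixed element of $\Z_2$ does not change the condition "$(b,c)$-labels times $z$-labels agree within each block", so $\delta_{(p,z)}$ is unchanged. For surjectivity I would argue that $T$ is $H_n$-equivariant (it obviously intertwines the natural actions) and that its image is closed under composition and tensor product, so it suffices to hit a generating set of $\bigoplus_{k,l}\Hom_{H_n}((\C^{2n})^{\otimes k},(\C^{2n})^{\otimes l})$ as a monoidal category; alternatively, and more cleanly, I would use the double-centralizer / Schur--Weyl description of $\mathrm{End}_{H_n}((\C^{2n})^{\otimes m})$ together with the known fact (for the $S_n$-case) that the analogous uncoloured map is surjective. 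The cleanest route is: the $H_n$-invariants of $(\C^{2n})^{\otimes(k+l)} \cong (\C^{2n})^{\ast\otimes k}\otimes(\C^{2n})^{\otimes l}$ are spanned by orbit sums of the basis $e_{b_1}^{i_1}\otimes\cdots$, and each orbit sum is visibly $T_{(p,z)}$ for the coloured partition whose underlying partition records which indices $i$ coincide and whose colour vector records the relative signs $b$; the $H_n$-orbit of a basis tuple is exactly determined by this combinatorial datum, which gives surjectivity on the nose.

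\textbf{Injectivity for $k+l\le n$.} Here I would show the spanning set above is a basis when $k+l\le n$. By the standard adjunction one reduces to computing $\dim \Hom_{H_n}(\mathbf 1,(\C^{2n})^{\otimes m})$ for $m=k+l\le n$, i.e.\ the number of $H_n$-orbits on $m$-tuples of basis vectors $\{e_b^i\}$. An $m$-tuple is a function from $\{1,\dots,m\}$ to $\{1,\dots,n\}\times\Z_2$; two such are in the same $H_n = \Z_2^n\rtimes S_n$-orbit iff they induce the same set-partition of $\{1,\dots,m\}$ (via the $i$-coordinate) \emph{and} the same relative-sign data on each block up to a global flip per block — and when $m\le n$ there are enough coordinates $i\in\{1,\dots,n\}$ that \emph{every} set-partition is realized with \emph{every} sign pattern, with no further collisions. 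So the number of orbits equals the number of $\simeq$-classes of coloured partitions in $P_{\Z_2}(0,m)$, which is exactly $\dim \C P_{\Z_2}(k,l)$ after redistributing $m=k+l$ into lower/upper vertices. Matching dimensions with the already-known surjectivity forces $T$ to be an isomorphism. The main obstacle is bookkeeping the $\simeq$-relation carefully so that the orbit count on the representation side matches $\dim\C P_{\Z_2}(k,l)$ precisely; this is where one must be sure that the per-block global sign flip is the \emph{only} coincidence among colour vectors and that it is already quotiented out in the definition of the morphism space. Since all of this is the $H_n$-analogue of Comes's argument in \cite{Comes_2011} for $S_t$ and is carried out in \cite{Nyobe_Likeng_2021}, I would present it as a direct citation with the orbit-counting argument sketched, rather than reprove it in full.
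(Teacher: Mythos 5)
The paper gives no proof of this proposition at all: it is stated as a special case of \cite[Theorem 5.4]{Nyobe_Likeng_2021} and left at that, so your plan of citing the result and sketching the orbit-counting argument is entirely compatible with what the paper does, and your sketch is the standard Comes-style argument correctly adapted to the wreath product. One imprecision worth fixing before you rely on it: an orbit sum is \emph{not} literally $T_{(p,z)}$. By definition $\delta_{(p,z)}$ only requires that labels be constant (up to the $z$-twist) \emph{within} each block; it does not require distinct blocks to carry distinct indices $i$. So $T_{(p,z)}$ is the sum of the orbit sums attached to $(p,z)$ and to all of its (colour-compatible) coarsenings, and the correct statement is that the two spanning sets are related by a unitriangular matrix with respect to the coarsening order, whence they have the same span (M\"obius inversion). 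This is exactly the step that makes both halves of your argument work: for surjectivity it shows every orbit sum lies in the image of $T$, and for injectivity it reduces linear independence of the $T_{(p,z)}$ to linear independence of the orbit sums, which holds precisely when every coloured partition's ``leading'' orbit (distinct indices on distinct blocks) is actually realized --- guaranteed when the number of blocks, hence $k+l$, is at most $n$. Your orbit count $\prod_j 2^{n_j-1}$ per underlying partition matches the count of $\simeq$-classes, so the dimension comparison closes the argument as you describe.
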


These descriptions allow us to define interpolation functors from the interpolation categories to the categories of representations of $H_n$.

\begin{defi} \label{definition G}
     We define a strict $\C$-linear tensor functor between $\C$-linear spherical rigid symmetric monoidal categories
     \begin{align*}
         G': \underline{\text{Rep}}_{0}(H_{n}) \to \text{Rep}(H_{n})
     \end{align*} on the objects by $[k] \to u^{\otimes k}$ and on the morphisms by $p\to T_{p}$ for some partition $p\in P_{even}(k,l)$, after which one linearly extends this rule. The universal property of the Karoubian envelope gives us a $\C$-linear tensor functor 
    \begin{align*}
        G: \underline{\text{Rep}}(H_{n}) \to \text{Rep}(H_{n}),
    \end{align*} which satisfies $G'=G\circ \iota_{G}$ and is unique up to isomorphism. 
\end{defi}

The following description of the interpolation functor differs from \cite[Theorem 5.1]{Nyobe_Likeng_2021} since we do not use the presentation of the interpolation category via generators and relations.

\begin{defi} \label{definition H}
    We define a strict $\C$-linear tensor functor between $\C$-linear spherical rigid symmetric monoidal categories
    \begin{align*}
        H':\text{Par}(\mathbb{Z}_{2},2n) \to \text{Rep}(H_{n})
    \end{align*} on the objects by $[\tilde{k}] \to V^{\otimes k}$ and on the morphisms by a linear extension of the rule $(p,z)\to T_{(p,z)}$. The universal property of the Karoubian envelope gives us a $\C$-linear tensor functor 
    \begin{align*}
         H:\text{Par}(\mathbb{Z}_{2},2n)^{Kar} \to \text{Rep}(H_{n}),
    \end{align*} which satisfies $H'=H\circ \iota_{H}$ and is unique up to isomorphism. We will see that some choices of $H$ will be more convenient in particular situations, for example in the proof of Corollary \ref{commutative square}.
\end{defi}

Both $G$ and $H$ are full and essentially surjective (see \cite{Nyobe_Likeng_2021} \cite{BS09} \cite{Flake_2021}).

\subsubsection{Alternative definition of interpolation functor $H': \text{Par}(\mathbb{Z}_{2},2n) \to \text{Rep}(H_{n})$}

Using the description above, the semisimplification functor is defined by the strict $\C$-linear monoidal functor $H'': \text{Par}(\Z_{2})/\sim _{n} \to \text{Rep}(H_{n})$ on objects by sending $W$ to $V\in \text{Rep}(H_{n})$ \cite[Theorem 5.1]{Nyobe_Likeng_2021}. The images of the generating objects are defined by:
   \begin{align*}
        H''(\merge) &\colon V \otimes V \to V, &
        e^i_g \otimes e^j_h &\mapsto \delta_{g,h} \delta_{i,j}  e^i_g,
        \\
        H''(\spliter) &\colon V \to V \otimes V, &
         e^i_g &\mapsto  e^i_g \otimes  e^i_g,
        \\
        H''(\crossing) &\colon V \otimes V \to V, &
        v \otimes w &\mapsto w \otimes v,
        \\
        H''(\bottompin) &\colon \C \to V, &
        1 &\mapsto \textstyle \sum_{g \in G} \sum_{i=1}^n  e^i_g,
        \\
        H''(\toppin) &\colon V \to \C, &
         e^i_g &\mapsto 1,
        \\
        H''(\tokstrand) & \colon V \to V, &
         e^i_h &\mapsto  e^i_{gh}
    \end{align*}
 for $g,h \in \Z_{2}$.

\subsubsection{Alternative definition of interpolation functor $G': \underline{\text{Rep}}_{0}(H_{n}) \to \text{Rep}(H_{n})$}

The semisimplification functor to $Rep(H_n)$ can be described by the functor $G'': \text{Par}_{n} \to \text{Rep}(H_{n})$ on objects by sending $W$ to $u\in \text{Rep}(H_{n})$. The images of the generating objects are defined by:
   \begin{align*}
        G''(\begin{tikzpicture}[anchorbase]
            \draw (-0.2,-0.4) -- (0,-0.2) -- (0.2,-0.4);
            \draw (-0.2,0.4) -- (0,0.2) -- (0.2,0.4);
            \draw (0,-0.2) -- (0,0.2);
        \end{tikzpicture}) &\colon u \otimes u \to u \otimes u, &
        e_i \otimes e_j &\mapsto \delta_{i,j}  e_i \otimes e_{i},
        \\
        G''(\begin{tikzpicture}[anchorbase]
            \draw (-0.2,-0.4) -- (0,-0.2) -- (0.2,-0.4);
            \draw (0,-0.2) -- (0,0.2);
            \opendot{0,0.2};
        \end{tikzpicture}) &\colon u\otimes u \to \C, &
         e_i \otimes e_{j} &\mapsto \delta_{i,j},
         \\
        G''(\crossing) &\colon u \otimes u \to u, &
        v \otimes w &\mapsto w \otimes v,
        \\
        G''(\begin{tikzpicture}[anchorbase]
            \draw (-0.2,0.4) -- (0,0.2) -- (0.2,0.4);
            \draw (0,-0.2) -- (0,0.2);
            \opendot{0,-0.2};
        \end{tikzpicture}) &\colon \C \to u \otimes u, &
        1 &\mapsto \textstyle \sum_{i=1}^n  e_i \otimes e_{i}.
    \end{align*}
  The fact that this is well-defined is proven similarly as in \cite[Theorem 5.1]{Nyobe_Likeng_2021}.

\subsection{Definition of $\Omega$}

We saw in Remark \ref{u subrepresentatino of V} that the reflection permutation of the hyperoctahedral group $H_{n}$ is isomorphic to a subrepresentation of the permutation representation of $H_{n}$. We want to describe the subrepresentation $\tilde{u} = \bigoplus_{i=1}^{n}\mathbb{C}(e^{i}_{1}-e^{i}_{-1})\subset V$ as the image of some idempotent $e:V \to V$. 

\begin{defi}
Let $\alpha:\tilde{u} \to V$ be the inclusion and define $\beta: V \to \tilde{u}$ by
\begin{align*}
    &e_{1}^{i}\mapsto \frac{e_{1}^{i}-e_{-1}^{i}}{2} \text{ for all } i\in \{1,\ldots, n\}\\
    &e_{-1}^{i} \mapsto \frac{-e_{1}^{i}+e_{-1}^{i}}{2} \text{ for all } i\in \{1,\ldots, n\}.
\end{align*}
Then the morphism $e:=\alpha \circ \beta:V \to V$ is an idempotent with a splitting given by $\alpha,\beta$ and $\tilde{u}$.
    
\end{defi}

\begin{defi} Define the idempotent
    \begin{align*}
    e':=\frac{\begin{tikzpicture}[centerzero]
            \pd{0,-0.2} node[anchor=east] {\dotlabel{1}};
            \pd{0,0.2};
            \draw (0,-0.2) -- (0,0.2);
        \end{tikzpicture}-\begin{tikzpicture}[centerzero]
            \pd{0,-0.2} node[anchor=east] {\dotlabel{-1}};
            \pd{0,0.2};
            \draw (0,-0.2) -- (0,0.2);
        \end{tikzpicture}}{2}: [\tilde{1}] \to [\tilde{1}]            
    \end{align*}
    in $\text{Par}(\Z_{2},2t)$ for arbitrary $t \in \C$. 
\end{defi}

\begin{lemma} Let $H:\text{Par}(\Z_{2},2n)^{Kar}\to \text{Rep}(H_{n})$ denote the semisimplification functor. Then
\begin{align*}
    &H([\tilde{1}])= V \text{ and }\\
    &H(([\tilde{1}],e'))= \tilde{u}\cong u.
\end{align*}
\end{lemma}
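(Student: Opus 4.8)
The plan is to unwind both equalities through the alternative description of the semisimplification functor $H$ provided in the text, namely $H'' : \mathrm{Par}(\Z_2)/\!\sim_n \to \mathrm{Rep}(H_n)$ together with the identification $\mathrm{Par}(\Z_2,2n) \cong \mathrm{Par}(\Z_2)/\!\sim_n$ and the full embedding into the Karoubi envelope. First I would record that under the functor $\tilde H$ and the chosen $H$, the object $[\tilde 1]$ corresponds to the generating object $W$, so $H([\tilde 1]) = H''(W) = V$, which is the first claim. The only subtlety is that $H$ is defined only up to isomorphism; for the lemma it suffices to fix the convenient representative coming from $H''$, which is legitimate since the statement is about isomorphism classes of objects.

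For the second claim I would first identify the morphism $e'$ under the isomorphism $\tilde H^{-1}$ with an explicit morphism $W \to W$ in $\mathrm{Par}(\Z_2)/\!\sim_n$, namely $\tfrac12(\idstrand - \tokstrand[-1])$ (the identity strand minus the strand carrying the token $-1$). Applying $H''$ to this gives the endomorphism $\tfrac12(\id_V - \zeta)$ of $V$, where $\zeta = H''(\tokstrand[-1])$ sends $e^i_h \mapsto e^i_{-h}$, i.e. swaps $e^i_1 \leftrightarrow e^i_{-1}$. Hence $H(e') = \tfrac12(\id_V - \zeta)$ acts by $e^i_1 \mapsto \tfrac{e^i_1 - e^i_{-1}}{2}$ and $e^i_{-1} \mapsto \tfrac{-e^i_1 + e^i_{-1}}{2}$, which is exactly the idempotent $e = \alpha\circ\beta$ of the previous definition. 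One should check (a one-line computation) that $e'$ is genuinely idempotent in $\mathrm{Par}(\Z_2,2t)$ — this uses the token relations $\tokstrand[-1]\circ\tokstrand[-1] = \idstrand$ from \eqref{GPC5}, ensuring $(\tfrac12(\idstrand - \tokstrand[-1]))^2 = \tfrac14(2\cdot\idstrand - 2\tokstrand[-1]) = \tfrac12(\idstrand - \tokstrand[-1])$ — so that $([\tilde 1], e')$ is a well-defined object of $\mathrm{Par}(\Z_2,2n)^{Kar}$, and that $H$, being $\C$-linear and sending $\iota_H([\tilde 1]) = [\tilde 1]$ to $V$, sends $([\tilde 1], e')$ to the image of the idempotent $H(e') = e$ in $\mathrm{Rep}(H_n)$.

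The final step is then to observe that the image of the idempotent $e : V \to V$ in the abelian category $\mathrm{Rep}(H_n)$ (equivalently, its splitting, which exists there since the category is semisimple and in any case idempotent complete) is precisely the subrepresentation $\tilde u = \bigoplus_{i=1}^n \C(e^i_1 - e^i_{-1})$ — this is immediate from the explicit splitting $\alpha, \beta, \tilde u$ recorded in the definition of $e$. Combined with the already-noted isomorphism $\tilde u \cong u$ from Remark \ref{u subrepresentatino of V}, this gives $H(([\tilde 1], e')) = \tilde u \cong u$. I do not expect any serious obstacle here; the main point requiring care is bookkeeping — matching the diagrammatic generator $e'$ in $\mathrm{Par}(\Z_2,2n)$ with its image under the explicit formulas for $H''$, and being careful that $H$ is only well-defined up to isomorphism so that one is free to use the representative $H''$ and still draw conclusions about $\tilde u$ up to isomorphism. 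The verification that $e'$ is idempotent and that $H$ preserves the splitting of idempotents (automatic, since $H$ is a functor and $\mathrm{Rep}(H_n)$ is idempotent complete) are the only computational inputs.
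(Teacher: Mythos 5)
Your proposal is correct and follows essentially the same route as the paper: establish $H([\tilde 1])=V$ by definition, compute $H(e')$ explicitly to see it equals the idempotent $e=\alpha\circ\beta$ with image $\tilde u$, and then use the freedom in choosing how $H$ splits idempotents to conclude $H(([\tilde 1],e'))=\tilde u\cong u$. The only (immaterial) difference is that you evaluate $H(e')$ through the presentation $H''$ on $\text{Par}(\Z_2)/\sim_{2n}$ via $e'=\tfrac12(\idstrand-\tokstrand[-1])$, whereas the paper computes $T_{(p,z)}$ on the two coloured strands directly; these agree by construction, so nothing is gained or lost.
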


\begin{proof} By definition we know that $H'([\tilde{1}])=V$ for $H':\text{Par}(\Z_{2},2n)\to \text{Rep}(H_{n})$, the restriction of $H$ to $\text{Par}(\Z_{2},2n)\subset \text{Par}(\Z_{2},2n)^{Kar}$. It follows directly from the definitions of $T$ that
     \begin{align*}
            H'(e')(e^{i}_{1})&= \frac{T(\begin{tikzpicture}[centerzero]
            \pd{0,-0.2} node[anchor=east] {\dotlabel{1}};
            \pd{0,0.2};
            \draw (0,-0.2) -- (0,0.2);
        \end{tikzpicture})(e^{i}_{1}) -T(\begin{tikzpicture}[centerzero]
            \pd{0,-0.2} node[anchor=east] {\dotlabel{-1}};
            \pd{0,0.2};
            \draw (0,-0.2) -- (0,0.2);
        \end{tikzpicture})(e^{i}_{1})}{2} = \frac{e_{1}^{i}-e_{-1}^{i}}{2} \text{ and }\\
        H'(e')(e^{i}_{-1})&= \frac{T(\begin{tikzpicture}[centerzero]
            \pd{0,-0.2} node[anchor=east] {\dotlabel{1}};
            \pd{0,0.2};
            \draw (0,-0.2) -- (0,0.2);
        \end{tikzpicture})e^{i}_{-1} -T(\begin{tikzpicture}[centerzero]
            \pd{0,-0.2} node[anchor=east] {\dotlabel{-1}};
            \pd{0,0.2};
            \draw (0,-0.2) -- (0,0.2);
        \end{tikzpicture})e^{i}_{-1}}{2} = \frac{-e_{1}^{i}+e_{-1}^{i}}{2},
    \end{align*}
    for all $i\in \{1,\ldots,n\}.$ This implies that $H'(e')=e.$

    Let $\alpha': ([\tilde{1}],e') \to ([\tilde{1}],id_{[\tilde{1}]})$ and $\beta': ([\tilde{1}],id_{[\tilde{1}]}) \to ([\tilde{1}],e')$ be the splitting of the idempotent $e':([\tilde{1}],id_{[\tilde{1}]})\to ([\tilde{1}],id_{[\tilde{1}]})$ in $\text{Par}(\Z_{2},2n)^{Kar}$. The functor $H:\text{Par}(\Z_{2},2n)^{Kar}\to \text{Rep}(H_{n})$ is defined by sending a splitting in $\text{Par}(\Z_{2},2n)^{Kar}$ to some chosen splitting in $\text{Rep}(H_{n})$. So we can make the following choices for $H$: 
    \begin{align*}
        H(([\tilde{1}],e'))=H(im(e'))&:= im(e)=\tilde{u},\\
        H(\alpha')&:=\alpha \text{ and }\\
        H(\beta')&:=\beta.
    \end{align*}
\end{proof}

\begin{lemma} \label{inspiration omega self dual and dimension}
    Let $t\in \C$. The objects $([\tilde{1}],e')$ and $([\tilde{1}],1-e')$ in $\text{Par}(\Z_{2},2t)^{Kar}$ are self-dual and have categorical dimension $t$.
\end{lemma}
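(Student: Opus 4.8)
The plan is to verify everything by working inside the Karoubian envelope $\text{Par}(\Z_{2},2t)^{Kar}$ concretely, using the partition-diagram calculus. First I would record that $[\tilde{1}]$ is self-dual in $\text{Par}(\Z_{2},2t)$ (it already carries a self-dual structure since $\text{Par}(\Z_{2},2t)$ is spherical rigid by Proposition \ref{Par is spherical}, and the evaluation/coevaluation are the obvious uncolored cap and cup $\{\{1,2\}\}$ with trivial coloring). Then a splitting $([\tilde{1}],e')$ of an idempotent on a self-dual object is again self-dual: concretely, take the dual to be $([\tilde{1}],(e')^{*})$ and note $(e')^{*}=e'$ because reflecting the defining diagram of $e'$ across the horizontal axis fixes each of the two strand-with-token diagrams and fixes the coefficients $\tfrac12,-\tfrac12$. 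The evaluation and coevaluation for the splitting are then obtained from those of $[\tilde{1}]$ by pre- and post-composing with $e'$ (equivalently with the splitting maps $\alpha',\beta'$); the snake identities for the new (co)evaluations follow from the snake identities on $[\tilde{1}]$ together with $e'=(e')^{2}=(e')^{*}$. The same argument applies verbatim to $1-e'$, since $1-e'$ is again an idempotent with $(1-e')^{*}=\mathrm{id}-(e')^{*}=1-e'$.

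Next I would compute the categorical dimensions. Since $\text{Par}(\Z_{2},2t)^{Kar}$ is spherical (Proposition \ref{Karoubian envelope spherical}, \ref{Par is spherical}), $\dim([\tilde{1}],e') = \mathrm{tr}(e') = \mathrm{tr}(\mathrm{id}_{[\tilde{1}]}\circ e')$ computed as the closure of the diagram $e'$, i.e. $\mathrm{ev}_{[\tilde{1}]}\circ (e'\otimes \mathrm{id})\circ \mathrm{coev}_{[\tilde{1}]}$. By linearity this is $\tfrac12$ times the closure of the strand carrying token $1$ minus $\tfrac12$ times the closure of the strand carrying token $-1$. Closing the trivial strand (token $1$) gives a single loop labeled trivially, which evaluates to the loop parameter $2t$; closing the strand with token $-1$ gives a loop whose block carries the color $-1$ on one of its two vertices, and by the equivalence relation on $\Z_{2}$-coloured partitions (Remark \ref{identify equivalence classes colored partitions}) — we may multiply both labels of that single block by $-1$ — this is equal to the uncolored loop, hence also evaluates to $2t$. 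Therefore $\dim([\tilde{1}],e') = \tfrac12(2t) - \tfrac12(2t)$... which would give $0$, so I must instead observe that the closure of a \emph{single} strand-with-token is computed as a loop whose coloring is irrelevant up to equivalence, giving $2t$ in both cases — wait: more carefully, $\mathrm{tr}$ of the identity strand is the loop $=2t$, and $\mathrm{tr}$ of the token strand equals $\delta_{g,1}$-type evaluation. The correct computation: closing $\tokstrand[g]$ produces the colored loop which equals the plain loop, value $2t$, so naively $\mathrm{tr}(e')=\tfrac12(2t-2t)=0$; since the claim is $\dim=t$, the resolution is that one of the two summands closes to give $2t$ and the other to give $0$ because the token $-1$ sits on only one of the two endpoints being identified, and after closing, the block acquires \emph{two} vertices both in the loop — so in fact I should compute $\mathrm{tr}(\tokstrand[-1])$ directly via $T_{(p,z)}$ under $H$ or via the defining relations, getting $0$, hence $\dim([\tilde 1],e')=\tfrac12(2t-0)=t$.

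The cleanest route, and the one I would actually write, is to bypass the sign bookkeeping by using the interpolation functor and a dimension-count/Zariski-density argument: for $t=n\in\N$ large we have the faithful-enough functor $H$ with $H(([\tilde{1}],e'))\cong \tilde u\cong u$ of dimension $n$, so $\dim([\tilde 1],e')$, which is a polynomial in $t$ of degree $\le 1$ (it is $\mathrm{tr}(e')$, a $\C$-linear combination of loop-closures each contributing $1$ or $2t$), agrees with $t$ for infinitely many $t$ and hence identically; similarly $\dim([\tilde 1],1-e')=\dim[\tilde 1]-\dim([\tilde 1],e')=2t-t=t$. The main obstacle is precisely the careful evaluation of the closure of a single strand carrying the token $-1$: one must pin down whether the resulting colored loop is equivalent to the trivial loop (value $2t$) or collapses (value $0$), which comes down to the precise form of the $\Z_{2}$-coloured trace and the equivalence relation; the polynomiality-plus-specialization argument sidesteps having to resolve this by hand while still being rigorous.
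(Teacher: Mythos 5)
The paper states this lemma without proof, so there is no argument of the authors' to compare against; judged on its own terms, your proposal is correct, but only because of the fallback you adopt at the end. The self-duality part is fine: $(\tokstrand[g])^{*}=\tokstrand[g]$ as a coloured partition (the labellings $(g,1)$ and $(1,g)$ of the block $\{1,1'\}$ differ by multiplying both entries by $g$), so $e'$ and $1-e'$ are self-adjoint idempotents on the self-dual object $[\tilde{1}]$, and their splittings inherit evaluation and coevaluation maps satisfying the snake identities by naturality. Your specialization argument for the dimension is also legitimate: $\tr(e')$ is computed by a $t$-independent combinatorial recipe, hence is a polynomial of degree at most $1$ in $t$; the lemma immediately preceding this one identifies $H(([\tilde{1}],e'))$ with $u$, the tensor functor $H$ preserves categorical dimensions, and $\dim u=n$ in $\text{Rep}(H_{n})$, so $\tr(e')=t$ identically and $\tr(1-e')=2t-t=t$ by linearity of the trace.

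However, the point you explicitly leave unresolved --- the value of the closure of $\tokstrand[-1]$ --- has a definite answer, and settling it gives the one-line direct proof that is surely intended. Writing $\mathrm{ev}_{[\tilde{1}]}=\toppin\circ\merge$ and $\mathrm{coev}_{[\tilde{1}]}=\spliter\circ\bottompin$, the relation $\merge\circ(\tokstrand[g]\otimes\tokstrand[h])\circ\spliter=\delta_{g,h}\,\tokstrand[g]$ from Definition \ref{definition generators relations parZ2}, applied with $g=-1$, $h=1$ and $\tokstrand[1]=\idstrand$, gives $\tr(\tokstrand[-1])=0$. Diagrammatically: closing the $-1$-token strand forces the two middle vertices of the resulting single block to carry the labels $-1$ and $1$, the compatibility condition for vertical concatenation fails, and the composite vanishes. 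The equivalence relation on coloured partitions cannot rescue it, since it only permits multiplying \emph{all} labels of a block by a common sign, which cannot turn $(-1,1)$ into $(1,1)$. So the coloured loop contributes $0$, not $2t$, and $\tr(e')=\tfrac{1}{2}(2t-0)=t$ directly. Your hesitation conflates the equivalence relation on a fixed diagram with the compatibility condition governing composition; it is the latter that is operative here, and it is also exactly the mechanism behind the vanishing $(e')^{\otimes b}\circ B\circ(e')^{\otimes a}=0$ for odd blocks used later in Theorem \ref{Properties of the functor Omega0}.
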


\begin{rem} \label{changing signs}
    The object in $(\text{Par}(\Z_{2})/\sim_{2t})^{Kar}$ corresponding to  $([\tilde{1}],e')$ of $\text{Par}(\Z_{2},2t)^{Kar}$ is $(W,e')$ where we will set, by abuse of notation,
\begin{align*}
    e':= \frac{\tokstrand[1]-\tokstrand[-1]}{2}.
\end{align*}
Note that $e' \circ \tokstrand[-1]=-e'=\tokstrand[-1] \circ e'$.
\end{rem}

\begin{thm} \label{Omega0 exists!}
    For all $t\in \C$ there is a well-defined strict $\C$-linear tensor functor
    \begin{align*}
        \Omega_{0}: \text{Par}_{t}\cong \underline{\text{Rep}}_{0}(H_{t}) \to (\text{Par}(\Z_{2},2t))^{Kar} \cong (\text{Par}(\Z_{2})/\sim_{2t})^{Kar}
    \end{align*} which is defined on objects by 
    \begin{align*}
        \Omega_{0}([k]):= ([\tilde{k}],(e')^{\otimes k})
    \end{align*} for all $k\in \N$. On morphisms it is the $\C$-linear extension of the following rule. Let $f:W^{\otimes k}\to W^{\otimes l}$ be a morphism in $\text{Par}_{t}$ which consists of $s$ blocks $B_{1},\ldots,\B_{s}$ of size $m_{1},\ldots,m_{s}$ respectively. Then we set 
    \begin{align*}
        \Omega_{0}(f):= 2^{\frac{(\sum_{i=1}^{s}m_{i})-2s}{2}} (e')^{\otimes l} \circ f \circ (e')^{\otimes k},
    \end{align*}
    where we identify morphisms in $\text{Par}_{t}$ with the corresponding morphisms in $\text{Par}(\Z_{2})/\sim_{2t}$ by using the following associations: 
    \begin{align*} 
    \CROSS &\to  \crossing, & \
    \begin{tikzpicture}[anchorbase]
            \draw (-0.2,-0.4) -- (0,-0.2) -- (0.2,-0.4);
            \draw (-0.2,0.4) -- (0,0.2) -- (0.2,0.4);
            \draw (0,-0.2) -- (0,0.2);
        \end{tikzpicture} &\to  \spliter \circ \merge,\\ 
    \idstrand &\to  \idstrand \ , & \ 
    \begin{tikzpicture}[anchorbase]
            \draw (-0.2,-0.4) -- (0,-0.2) -- (0.2,-0.4);
            \draw (0,-0.2) -- (0,0.2);
            \opendot{0,0.2};
        \end{tikzpicture} &\to  \toppin \circ \merge \text{  and } \\
        \begin{tikzpicture}[anchorbase]
            \draw (-0.2,0.4) -- (0,0.2) -- (0.2,0.4);
            \draw (0,-0.2) -- (0,0.2);
            \opendot{0,-0.2};
        \end{tikzpicture}&\to  \spliter \circ \bottompin.
        \end{align*}
    
\end{thm}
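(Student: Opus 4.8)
The natural strategy is to use the universal property of $\text{Par}_t$ from Theorem \ref{real Universal property fo RepH0, not Karoubian universal prop} (via the presentation of Theorem \ref{Functor G tilde}), so that instead of verifying well-definedness of the scaling rule $\Omega_0(f) = 2^{((\sum m_i)-2s)/2}(e')^{\otimes l}\circ f\circ (e')^{\otimes k}$ on all morphisms, it suffices to produce inside $(\text{Par}(\Z_2)/\!\sim_{2t})^{Kar}$ a $t$-dimensional rigid self-dual object with neutralizer $(A,\alpha,\beta,\delta)$. The obvious candidate is $A := (W,e')$ with $e'=(\tokstrand[1]-\tokstrand[-1])/2$; Lemma \ref{inspiration omega self dual and dimension} already tells us it is self-dual of categorical dimension $t$. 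The structure morphisms should be the rescaled images of the generators of $\text{Par}_t$ under the associations displayed in the theorem statement, e.g.\ $\alpha := \sqrt{2}\,(e'\otimes e')\circ(\spliter\circ\merge)\circ(e'\otimes e')$, $\beta := \tfrac{1}{\sqrt 2}\,(\toppin\circ\merge)\circ(e'\otimes e')$ (so $\beta\colon A\otimes A\to\mathbf 1$), and $\delta := \tfrac{1}{\sqrt 2}\,(e'\otimes e')\circ(\spliter\circ\bottompin)\colon \mathbf 1\to A\otimes A$; the scalar powers of $\sqrt 2$ are exactly what makes the relations come out with the right normalization (each generating morphism $\begin{tikzpicture}[anchorbase]\draw (-0.2,-0.4) -- (0,-0.2) -- (0.2,-0.4);\draw (-0.2,0.4) -- (0,0.2) -- (0.2,0.4);\draw (0,-0.2) -- (0,0.2);\end{tikzpicture}$, $\begin{tikzpicture}[anchorbase]\draw (-0.2,-0.4) -- (0,-0.2) -- (0.2,-0.4);\draw (0,-0.2) -- (0,0.2);\opendot{0,0.2};\end{tikzpicture}$, $\begin{tikzpicture}[anchorbase]\draw (-0.2,0.4) -- (0,0.2) -- (0.2,0.4);\draw (0,-0.2) -- (0,0.2);\opendot{0,-0.2};\end{tikzpicture}$ is a single even block of size $4$, $2$, $2$ respectively, giving local exponents $\tfrac{4-2}{2}=1$, $\tfrac{2-2}{2}=0$, $\tfrac{2-2}{2}=0$—but one must recheck: a block of size $2$ contributes $2^{0}=1$, so one of the two scalars above should in fact be $1$, not $1/\sqrt2$; the precise bookkeeping of which blocks carry which $\sqrt2$ is something to pin down carefully).

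\textbf{Key steps, in order.} First I would record explicitly, using Remark \ref{changing signs} ($e'\circ\tokstrand[-1] = -e' = \tokstrand[-1]\circ e'$) together with the defining relations (GPC1)--(GPC5) of $\text{Par}(\Z_2)/\!\sim_{2t}$ and the Frobenius/involution axioms, a short list of ``straightening'' identities for composites of $e'$ with $\merge,\spliter,\toppin,\bottompin,\crossing$: in particular that $e'$ commutes appropriately with $\crossing$, that $\merge\circ(e'\otimes e') = e'\circ\merge\circ(e'\otimes e')$ etc., and — the crucial computational nucleus — the value of the ``bubble'' $\toppin\circ\merge\circ(e'\otimes e')\circ\spliter\circ\bottompin$ (this should be $2t$, matching $\dim A = t$ after the $1/2$ rescaling, or rather it should produce the factor that makes axiom~2 $\delta\circ\beta = t$ hold; cf.\ $\lolly = 2t$ in this category). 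Second, I would define $A,\alpha,\beta,\delta,\gamma:=s_{A,A}$ as above and verify the four groups of axioms of Theorem \ref{real Universal property fo RepH0, not Karoubian universal prop}: rigidity/self-duality (the zig-zag), dimension $t$, $\alpha$ a neutralizer ($\alpha\circ\alpha=\alpha$, hexagon-type interchange), and compatibility of $\alpha$ with $\beta,\delta$. Each of these reduces, via the straightening identities, to a diagrammatic identity among even partition diagrams decorated with $e'$; the idempotency $\alpha\circ\alpha=\alpha$ in particular uses $\merge\circ\spliter = \idstrand$ (specialness, GPC3/GPC4) plus $e'^2=e'$. Third, Theorem \ref{real Universal property fo RepH0, not Karoubian universal prop} then yields a strict $\C$-linear symmetric monoidal functor $\mathcal G'\colon \underline{\text{Rep}}_0(H_t)\to (\text{Par}(\Z_2)/\!\sim_{2t})^{Kar}$ with $\mathcal G'([k]) = A^{\otimes k} = (W,e')^{\otimes k}$, which under $\tilde G$ (Theorem \ref{Functor G tilde}) is exactly a functor $\Omega_0\colon\text{Par}_t\to(\text{Par}(\Z_2)/\!\sim_{2t})^{Kar}$ agreeing with the stated object map. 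Finally I would check that the morphism formula for $\Omega_0$ claimed in the theorem coincides with $\mathcal G'$: since $\mathcal G'$ is monoidal and sends generators to the chosen $\alpha,\beta,\gamma,\delta$, and since any $f\in\text{Par}_t$ factors (Theorem \ref{Functor G tilde}, via the normal form of Proposition \ref{partition can be written by permutation partitions}) as $\phi'(\sigma)\circ(B_1'\otimes\cdots\otimes B_s')\circ\phi'(\rho)$ with the $B_i'$ built from the generators, the accumulated $\sqrt2$-scalars multiply to $2^{((\sum m_i)-2s)/2}$ exactly because permutation partitions contribute no scalar and each block $B_i$ of size $m_i$ is assembled from $s_n,t_n$-type composites whose scalars telescope to $2^{(m_i-2)/2}$.

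\textbf{Main obstacle.} The hard part is not the universal-property machinery — that is essentially bookkeeping once the structure morphisms are named — but rather verifying that the chosen $(A,\alpha,\beta,\delta)$ genuinely satisfies \emph{all} the neutralizer-compatibility relations of axiom group~4, and getting every power of $\sqrt2$ to line up. The subtlety is that $e'$ is not grouplike for the comultiplication: $\spliter\circ e' \neq (e'\otimes e')\circ\spliter$ in general (one has $\begin{tikzpicture}[anchorbase]\draw (0,-0.3) -- (0,0) -- (0.25,0.25);\draw (0,0) -- (-0.25,0.25);\token{east}{0,-0.15}{g};\end{tikzpicture} = \begin{tikzpicture}[anchorbase]\draw (0,-0.3) -- (0,0) -- (0.25,0.25);\draw (0,0) -- (-0.25,0.25);\token{east}{-0.125,0.125}{g};\token{west}{0.125,0.125}{g};\end{tikzpicture}$ from (GPC5), so $\spliter$ is actually compatible, but $\toppin\circ e' = \toppin\circ\tokstrand[-1]\cdot(\text{sign})$ interacts with the $\opendot$-relations in a way that can kill terms), which is precisely why $\alpha\circ(id_A\otimes\zeta)\circ\beta = 0$-type vanishing phenomena from Theorem \ref{real universal property Par Z2} feed in here. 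Concretely: showing $\beta\circ\alpha = \beta$ and $\alpha\circ\delta=\delta$ will require expanding $e'$ as $\tfrac12(\idstrand - \tokstrand[-1])$, using the Frobenius relations to slide the tokens, and observing cancellations coming from $\toppin\circ\merge\circ(\idstrand\otimes\tokstrand[-1])\circ\spliter\circ\bottompin = 0$ (the analogue of the last relation in axiom~6 of Theorem \ref{real universal property Par Z2}). I would budget most of the proof's length for this diagrammatic verification and organize it as a sequence of small lemmas about $e'$-decorated composites so that the axiom checks become one-line consequences.
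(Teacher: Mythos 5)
Your proposal follows essentially the same route as the paper's proof: there, too, one takes the rescaled images of the generators as candidate structure morphisms on $(W,e')$, verifies the relations of Definition \ref{generator relations repH0 definition} (equivalently the axioms of Theorem \ref{real Universal property fo RepH0, not Karoubian universal prop}) via the key identity $e'\circ\tokstrand[-1]=-e'=\tokstrand[-1]\circ e'$, and lets the universal property produce the functor, with the blockwise scalar $2^{(m_i-2)/2}$ accounting for the global factor $2^{((\sum_i m_i)-2s)/2}$. The one loose end you flagged is settled by your own exponent computation $\tfrac{4-2}{2}=1$: the neutralizer carries the scalar $2$ (not $\sqrt2$) and the cap and cup carry the scalar $1$, exactly as in the paper's displayed images of the generators.
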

\begin{proof}

By definition of the monoidal structure of the Karoubian envelope of a category we see that  $([\tilde{1}],e')^{\otimes k}=(([\tilde{k}],(e')^{\otimes k}).$ So it is clear that $\Omega_{0}$ respects the tensor product for objects. By definition of the morphisms spaces in the Karoubian envelope, we have \begin{align*}
    \Hom_{\text{Par}(\Z_{2},2t)^{Kar}}(([\tilde{1}],e')^{\otimes k},([\tilde{1}],e')^{\otimes l})= (e')^{\otimes l} \circ \Hom_{\text{Par}(\Z_{2},2t)}([\tilde{k}],[\tilde{l}])\circ (e')^{\otimes k}.
\end{align*} For morphisms $f:W^{\otimes k_{1}}\to W^{\otimes l_{1}}$ with $s_{1}$ blocks $B_{1,1},\ldots,\B_{s,1}$ of size $m_{1},\ldots,m_{s_{1}}$ respectively and $g:W^{\otimes k_{2}}\to W^{\otimes l_{2}}$ with $s_{2}$ blocks $B_{1,2},\ldots,\B_{s_{2},2}$ of size $m_{s_{1}+1},\ldots,m_{s_{1}+s_{2}}$ respectively we see that 
\begin{align*}
    \Omega_{0}(f\otimes g)&= 2^{\frac{(\sum_{i=1}^{s_{1}+s_{2}}m_{i})-2(s_{1}+s_{2})}{2}} (e')^{\otimes l_{1}+l_{2}} \circ f \otimes g \circ (e')^{\otimes k_{1}+k_{2}}\\
    &= 2^{\frac{(\sum_{i=1}^{s_{1}}m_{i})-2(s_{1})}{2}}2^{\frac{(\sum_{i=s_{1}+1}^{s_{1}+s_{2}}m_{i})-2(s_{2})}{2}} ((e')^{\otimes l_{1}} \circ f  \circ (e')^{\otimes k_{1}}) \otimes ((e')^{\otimes l_{2}} \circ g  \circ (e')^{\otimes k_{2}})\\
    &=(2^{\frac{(\sum_{i=1}^{s_{1}}m_{i})-2(s_{1})}{2}} (e')^{\otimes l_{1}} \circ f  \circ (e')^{\otimes k_{1}} )\otimes (2^{\frac{(\sum_{i=s_{1}+1}^{s_{1}+s_{2}}m_{i})-2(s_{2})}{2}} (e')^{\otimes l_{2}} \circ g  \circ (e')^{\otimes k_{2}})\\
    &=\Omega_{0}(f) \otimes \Omega_{0}(g). 
\end{align*}
This shows that $\Omega_{0}$ respects the tensor product of morphisms. We will identify the functor $\Omega_{0}:\underline{\text{Rep}}_{0}(H_{t}) \to (\text{Par}(\Z_{2},2t))^{Kar}$ with the composition \begin{align*}
    \tilde{H}^{-1} \circ \Omega_{0} \circ \tilde{G}: \text{Par}_{t} \to (\text{Par}(\Z_{2})/\sim_{2t})^{Kar},
\end{align*} so we can talk easier about the image of the generators and relations of Definition \ref{generator relations repH0 definition} under the functor $\Omega_{0}$. The definition implies

\begin{align} 
    \Omega_{0}(\CROSS)&= 2^{0}(e'\otimes e') \circ \crossing \circ (e'\otimes e')=(e'\otimes e') \circ \crossing \circ (e'\otimes e'),\\
    \Omega_{0}(\begin{tikzpicture}[anchorbase]
            \draw (-0.2,-0.4) -- (0,-0.2) -- (0.2,-0.4);
            \draw (-0.2,0.4) -- (0,0.2) -- (0.2,0.4);
            \draw (0,-0.2) -- (0,0.2);
        \end{tikzpicture}) &= 2^{\frac{4-2}{2}} (e'\otimes e') \circ \spliter \circ \merge \circ (e'\otimes e') = 2 (e'\otimes e') \circ \begin{tikzpicture}[anchorbase]
            \draw (-0.2,-0.4) -- (0,-0.2) -- (0.2,-0.4);
            \draw (-0.2,0.4) -- (0,0.2) -- (0.2,0.4);
            \draw (0,-0.2) -- (0,0.2);
        \end{tikzpicture} \circ (e'\otimes e'), \\ 
    \Omega_{0}(\text{ } \idstrand \text{ })&= 2^{0} e' \circ \idstrand \circ e'=  e' \circ \idstrand \circ e',\\
    \Omega_{0}(\begin{tikzpicture}[anchorbase]
            \draw (-0.2,-0.4) -- (0,-0.2) -- (0.2,-0.4);
            \draw (0,-0.2) -- (0,0.2);
            \opendot{0,0.2};
        \end{tikzpicture})&= 2^{0} \toppin \circ \merge \circ (e' \otimes e')=\begin{tikzpicture}[anchorbase]
            \draw (-0.2,-0.4) -- (0,-0.2) -- (0.2,-0.4);
            \draw (0,-0.2) -- (0,0.2);
            \opendot{0,0.2};
        \end{tikzpicture} \circ (e' \otimes e') \text{ and }\\
        \Omega_{0}(\begin{tikzpicture}[anchorbase]
            \draw (-0.2,0.4) -- (0,0.2) -- (0.2,0.4);
            \draw (0,-0.2) -- (0,0.2);
            \opendot{0,-0.2};
        \end{tikzpicture} )&= 2^{0} \ (e' \otimes e') \circ \spliter \circ \bottompin= (e' \otimes e') \circ \begin{tikzpicture}[anchorbase]
            \draw (-0.2,0.4) -- (0,0.2) -- (0.2,0.4);
            \draw (0,-0.2) -- (0,0.2);
            \opendot{0,-0.2};
        \end{tikzpicture} .
\end{align}

Now we want to show that $\Omega_{0}$ is functorial. If we prove that the object $(W,e')$ and the morphisms (5.5), (5.6),(5.7),(5.8) and (5.9) satisfy the relations corresponding to (4.6),(4.7),(4.8) and (4.9), then the conditions for Theorem \ref{real Universal property fo RepH0, not Karoubian universal prop} are satisfied. In other words, we want to show that morphisms (5.5), (5.6),(5.7),(5.8) and (5.9), if considered as the swap morphism, neutralizer, identity, evaluation and coevaluation respectively, satisfy the necessary relations of definition \ref{generator relations repH0 definition}.

        The equation in remark \ref{changing signs}, implies the relation

\begin{center}

\tikzset{every picture/.style={line width=0.75pt}} 


\end{center}

and the fact that the corresponding relations hold in $\text{Par}(\Z_{2})/\sim_{2t}$. 

By Theorem \ref{real Universal property fo RepH0, not Karoubian universal prop} we see that $\Omega_{0}:\text{Par}_{t} \to (\text{Par}(\Z_{2})/\sim_{2t})^{Kar}$ is a well-defined $\C$-linear symmetric monoidal functor between $\C$-linear spherical rigid symmetric monoidal categories. The functor is clearly strict and preserves the different monoidal structures. Therefore it is a strict $\C$-linear tensor functor.
\end{proof}

\begin{cor}
    For all $t\in \C$ there is a well-defined $\C$-linear tensor functor
    \begin{align*}
        \Omega:  \underline{\text{Rep}}(H_{t}) \to (\text{Par}(\Z_{2},2t))^{Kar} 
    \end{align*} which restricts to $\Omega \circ \iota_{G}=\Omega_{0}$.
\end{cor}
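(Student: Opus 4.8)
The plan is to obtain $\Omega$ as the unique-up-to-isomorphism extension of $\Omega_{0}$ to the Karoubian envelope, using nothing beyond the universal property already built into Theorem \ref{real Universal property fo RepH0, not Karoubian universal prop}. First I would record that the target category $(\text{Par}(\Z_{2},2t))^{Kar}$ is a Karoubi category by Definition \ref{THE definition of Karoubian envelope and embedding} (all idempotents split and all finite biproducts exist), and that by Proposition \ref{Karoubian envelope spherical} it carries a $\C$-linear spherical rigid symmetric monoidal structure induced from that of $\text{Par}(\Z_{2},2t)$.

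Next, recalling that $\underline{\text{Rep}}(H_{t}) = (\underline{\text{Rep}}_{0}(H_{t}))^{Kar}$ with full embedding $\iota_{G}$, I would apply the second half of Theorem \ref{real Universal property fo RepH0, not Karoubian universal prop} with $\mathcal{C} := (\text{Par}(\Z_{2},2t))^{Kar}$ and $\mathcal{G}' := \Omega_{0}$, which is legitimate precisely because $\mathcal{C}$ is Karoubi: it produces an up-to-isomorphism unique $\C$-linear symmetric monoidal functor $\Omega \colon \underline{\text{Rep}}(H_{t}) \to (\text{Par}(\Z_{2},2t))^{Kar}$ with $\Omega \circ \iota_{G} = \Omega_{0}$. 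Concretely $\Omega$ sends an object $(A,e)$ to a chosen splitting of the idempotent $\Omega_{0}(e)$ and a morphism to the induced one. Since $\Omega_{0}$ already respects the tensor products of both objects and morphisms, as verified in the proof of Theorem \ref{Omega0 exists!}, the coherence isomorphism for $\Omega$ is forced on retracts, and the hexagon, pentagon and symmetry identities for $\Omega$ reduce to the corresponding identities for $\Omega_{0}$, which hold. As $\Omega_{0}$ is strict and $\Omega$ preserves the induced monoidal structures, $\Omega$ is a $\C$-linear tensor functor, as required.

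There is essentially no genuine obstacle here; the only point deserving a word is that the Karoubian universal property determines $\Omega$ only up to isomorphism, so one should either fix the standard model of the Karoubian envelope from Definition \ref{THE definition of Karoubian envelope and embedding} and take the canonical assignment $(A,e) \mapsto (\Omega_{0}(A),\Omega_{0}(e))$, or simply regard $\Omega$ as an isomorphism class of functors. Every later use of $\Omega$, in particular Corollary \ref{commutative square}, is insensitive to this choice, so either convention suffices.
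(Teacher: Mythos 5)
Your proposal is correct and is essentially the argument the paper intends: the corollary is an immediate application of the universal property of the Karoubian envelope, packaged in the paper as the ``Furthermore'' clause of Theorem \ref{real Universal property fo RepH0, not Karoubian universal prop}, whose hypotheses for the object $([\tilde{1}],e')$ were verified in the proof of Theorem \ref{Omega0 exists!}. Your remark about fixing the canonical model $(A,e)\mapsto(\Omega_{0}(A),\Omega_{0}(e))$ matches the choice the paper makes in Remark \ref{essential injectivity omega}.
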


\subsection{Fully faithfullness}

\begin{thm} \label{Properties of the functor Omega0} 
  The functor  $\Omega_{0}: \underline{\text{Rep}}_{0}(H_{t}) \to (\text{Par}(\Z_{2},2t))^{Kar}$ is a full embedding. 
\end{thm}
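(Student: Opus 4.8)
The plan is to prove faithfulness and fullness separately, exploiting the very explicit formula defining $\Omega_0$ together with the combinatorial normal forms established earlier. For faithfulness, the key observation is that $\Omega_0$ is, up to scalars, the functor ``conjugate by $(e')^{\otimes k}$'' from $\underline{\text{Rep}}_0(H_t) \cong \text{Par}_t$ into $(\text{Par}(\Z_2,2t))^{Kar}$. Because $([\tilde 1],e')$ is the image of the idempotent $e'$ and splits off the subrepresentation $\tilde u \cong u$ (Lemma before Lemma~\ref{inspiration omega self dual and dimension}), one expects this conjugation to be injective on morphism spaces; the nonzero scalars $2^{((\sum m_i) - 2s)/2}$ do not affect injectivity. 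First I would use Proposition~\ref{Functor H is full} and the fact (noted after Proposition~\ref{Functor F is full}) that $T\colon \C P_{even}(k,l) \to \Hom_{H_n}((\C^n)^{\otimes k}, (\C^n)^{\otimes l})$ is an isomorphism for $k+l \le n$: pick $n$ large, so that $\underline{\text{Rep}}_0(H_t)$ restricted to objects of bounded size is faithfully embedded in $\text{Rep}(H_n)$ (when $t = n$), and the statement for generic $t$ follows since the morphism spaces $\C P_{even}(k,l)$ have dimension independent of $t$ and the structure constants of $\Omega_0$ are polynomial (even Laurent-polynomial, but actually polynomial) in $t$, so a parametrized-rank argument reduces faithfulness at generic $t$ to faithfulness at infinitely many integer specializations $t = n$.

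More concretely, for the faithfulness at $t=n$: compose $\Omega_0$ with the semisimplification functor $H\colon (\text{Par}(\Z_2,2n))^{Kar} \to \text{Rep}(H_n)$. By the previous Lemma, $H(([\tilde 1],e')) = \tilde u \cong u$, so $H \circ \Omega_0$ agrees on objects with $G'\colon \underline{\text{Rep}}_0(H_n) \to \text{Rep}(H_n)$ up to the isomorphism $u \cong \tilde u$, and I would check — using the explicit formulas for $G''$ and $H''$ recorded in the subsections ``Alternative definition of interpolation functor'' — that $H \circ \Omega_0$ and $G'$ in fact coincide (as symmetric monoidal functors) after identifying the generators; the powers of $2$ are exactly the normalization needed to make the images of the even-block generators match $G''$'s formulas. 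Since $G'$ is faithful on morphism spaces $\Hom([k],[l])$ with $k+l \le n$ (because $T$ is an isomorphism there), and $H \circ \Omega_0 = G'$ factors through $\Omega_0$, the functor $\Omega_0$ itself is faithful on those morphism spaces; letting $n \to \infty$ gives faithfulness everywhere. Actually it is cleaner to argue: $\Omega_0$ is faithful because $H \circ \Omega_0$ is (being equal to the faithful-in-the-relevant-range functor $G'$), and faithfulness of a composite implies faithfulness of the first functor.

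For fullness (embedding requires injectivity on objects, which is immediate since $([\tilde k], (e')^{\otimes k})$ determines $k$ via categorical dimension or directly), I would work in $(\text{Par}(\Z_2)/\sim_{2t})^{Kar}$ and show that every morphism $([\tilde 1],e')^{\otimes k} \to ([\tilde 1],e')^{\otimes l}$, i.e. every element of $(e')^{\otimes l} \circ \C P_{\Z_2}(k,l) \circ (e')^{\otimes k}$, lies in the image of $\Omega_0$. The idea is dimension-counting combined with the explicit description of normal forms (Definition~\ref{definition normal form Z2 coloured}): using $e' \circ \tokstrand[-1] = -e' = \tokstrand[-1] \circ e'$ from Remark~\ref{changing signs}, together with relations \eqref{GPC4} and \eqref{GPC5} of Definition~\ref{definition generators relations parZ2}, one can show that $(e')^{\otimes l} \circ (p,z) \circ (e')^{\otimes k}$ depends — up to a scalar — only on the underlying partition $p$ with its blocks made \emph{even} (odd blocks are killed, since pushing a $\tokstrand[-1]$ through an odd block via the merge/split relations produces a sign contradiction, so $(e')^{\otimes l}\circ(p,z)\circ(e')^{\otimes k} = 0$ when $p$ has an odd block), and that for even $p$ the coloring $z$ becomes irrelevant after conjugation by $e'$. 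Thus the rank of $\Hom_{(\text{Par}(\Z_2,2t))^{Kar}}(([\tilde k],(e')^{\otimes k}),([\tilde l],(e')^{\otimes l}))$ is at most $\dim \C P_{even}(k,l) = \dim \Hom_{\underline{\text{Rep}}_0(H_t)}([k],[l])$; since $\Omega_0$ is already faithful, the induced map on Hom-spaces is injective between spaces of equal (finite) dimension, hence bijective, giving fullness.

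The main obstacle I anticipate is the fullness half — specifically, the bookkeeping needed to show $(e')^{\otimes l} \circ (p,z) \circ (e')^{\otimes k} = 0$ for $p$ with an odd block and, for even $p$, that conjugation by $e'$ collapses all colorings $z$ to a single (scalar multiple of a) well-defined morphism. This requires carefully chasing the token-through-merge relation $\alpha \circ (\zeta \otimes \id_A)\circ \beta = 0$ and $\alpha \circ (\zeta\otimes\zeta)\circ\beta = \zeta$ from Theorem~\ref{real universal property Par Z2}'s relation~(6), applied inductively along the blocks, which is exactly the type of diagrammatic calculation the problem says not to grind through but which is the real content. If instead one prefers to avoid the combinatorics, the alternative is to note that faithfulness plus the a priori inequality $\dim\Hom(([\tilde k],(e')^{\otimes k}),([\tilde l],(e')^{\otimes l})) \le \dim\C P_{even}(k,l)$ — which itself can be extracted cheaply from the fact that $H\colon (\text{Par}(\Z_2,2n))^{Kar} \to \text{Rep}(H_n)$ is full and the target Hom-space has the right dimension for $k+l\le n$ — forces equality and hence fullness, sidestepping the explicit collapse of colorings.
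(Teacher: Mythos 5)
Your fullness argument is essentially the paper's: the paper likewise reduces an arbitrary element $(e')^{\otimes l}\circ(p,z)\circ(e')^{\otimes k}$ to the uncolored case by pulling the signs $z_j$ out through $e'$, kills odd blocks by the substitution $z\mapsto -z$ on a block (which fixes the colored diagram up to equivalence but flips the overall sign $\prod_j z_j$ exactly when the block is odd), and observes that for even $p$ the result is a nonzero scalar multiple of $\Omega_0(p)$; it concludes surjectivity directly rather than via your dimension count, but the content is the same. For faithfulness the paper stays combinatorial: it expands $(e')^{\otimes l}\circ B\circ(e')^{\otimes k}$ for an even block $B$ as a sum of pairwise distinct basis elements of $\C P_{\Z_{2}}(k,l)$ (fixing $z_1=1$ to account for the block equivalence) and deduces that distinct even partitions have linearly independent images. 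Your route through the interpolation functors $H$ and $G'$ is genuinely different and buys a conceptual explanation — faithfulness is inherited from the isomorphism $T$ on $\Hom_{H_n}$ in the stable range $k+l\le n$ — at the price of first verifying $H\circ\Omega_0=G'$ on generators, a computation the paper only carries out later, in Corollary \ref{commutative square}.

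The one genuine gap is your specialization step. A parametrized-rank argument shows the locus of $t$ where the rank of $\Omega_0$ on a fixed $\Hom$-space drops is Zariski closed, so faithfulness at infinitely many integers $t=n$ yields faithfulness at all but \emph{finitely many} $t$ — not at every $t\in\C$, which is what the theorem asserts (and the non-semisimple values $t\in 2\N$ are precisely the interesting ones). The fix is to note that no specialization is needed: composing a colored partition with $e'$, a linear combination of colored identity diagrams, never closes a loop, so the map $f\mapsto (e')^{\otimes l}\circ f\circ (e')^{\otimes k}$, viewed as a linear map $\C P_{even}(k,l)\to\C P_{\Z_{2}}(k,l)$, carries no factor $t^{l(q,p)}$ and is literally independent of $t$. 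Injectivity at a single $t=n$ with $n\ge k+l$ therefore already gives injectivity for every $t$. With that observation inserted, your argument closes.
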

\begin{proof}
By definition $k\neq l$ implies that $\Omega_{0}([k])=([\tilde{k}],(e')^{\otimes k})\neq ([\tilde{l}],(e')^{\otimes l})=\Omega_{0}([k])$, showing that functor $\Omega_{0}$ is injective on objects.\\

Next we want to show that $\Omega_{0}$ is full. The functor $\Omega_{0}: \underline{\text{Rep}}_{0}(H_{t}) \to (\text{Par}(\Z_{2},2t))^{Kar}$ is given on morphisms by \begin{align*}
     \Omega_{0}(f)= 2^{\frac{(\sum_{i=1}^{s}m_{i})-2s}{2}} (e')^{\otimes l} \circ f \circ (e')^{\otimes k},
\end{align*}
for some partition $f\in P_{even}(k,l)$ with $s$ blocks of size $m_{1},\ldots,m_{s}$ and then extended $\C$-linearly. Remember that $f$ can be considered a $\Z_{2}$-coloured partition where every vertex is labeled as $1$ and that the tensor product of partitions is the horizontal concatenation of the partitions. 

We consider the morphism
\begin{align*}
    f=(e')^{\otimes l}\circ g \circ (e')^{\otimes k}\in \text{Hom}_{\text{Par}(\Z_{2},2t)^{Kar}}(([\tilde{k}],(e')^{\otimes k}), ([\tilde{l}],(e')^{\otimes l})) \\= (e')^{\otimes l}\circ \text{Hom}_{\text{Par}(\Z_{2},2t)}([\tilde{k}], [\tilde{l}]) \circ (e')^{\otimes k}.
\end{align*}
such that $g \in P_{\Z_{2}}(k,l)$ with labeling $(z_{1},\ldots, z_{k},z_{1'},\ldots,z_{l'}) \in \Z_{2}^{k+l}$.  Then 
 \begin{equation*} 
    g=
    \begin{tikzpicture}[centerzero]
        \pd{-0.4,-0.2};
        \pd{0.4,-0.2};
        \pd{0.8,-0.2};
        \pd{-0.4,0.2} node[anchor=south] {\dotlabel{z_{1'}}};
        \pd{0.4,0.2} node[anchor=south] {\dotlabel{z_{l-1'}}};
        \pd{0.8,0.2} node[anchor=south] {\dotlabel{z_{l'}}};
        \draw (-0.4,-0.2) -- (-0.4,0.2);
        \draw (0.4,-0.2) -- (0.4,0.2);
        \draw (0.8,-0.2) -- (0.8,0.2);
        \node at (0.04,0) {$\cdots$};
    \end{tikzpicture}
    \circ
    g''
    \circ
    \begin{tikzpicture}[centerzero]
        \pd{-0.4,-0.2} node[anchor=north] {\dotlabel{z_1}};
        \pd{0.4,-0.2} node[anchor=north] {\dotlabel{z_{k-1}}};
        \pd{0.8,-0.2} node[anchor=north] {\dotlabel{z_k}};
        \pd{-0.4,0.2};
        \pd{0.4,0.2};
        \pd{0.8,0.2};
        \draw (-0.4,-0.2) -- (-0.4,0.2);
        \draw (0.4,-0.2) -- (0.4,0.2);
        \draw (0.8,-0.2) -- (0.8,0.2);
        \node at (0.04,0) {$\cdots$};
  \end{tikzpicture}  \end{equation*} 
   and the relation 
\begin{align*}
    e' \circ \begin{tikzpicture}[centerzero]
            \pd{0,-0.2} node[anchor=east] {\dotlabel{-1}};
            \pd{0,0.2};
            \draw (0,-0.2) -- (0,0.2);
        \end{tikzpicture} = -e'= \begin{tikzpicture}[centerzero]
            \pd{0,-0.2} node[anchor=east] {\dotlabel{-1}};
            \pd{0,0.2};
            \draw (0,-0.2) -- (0,0.2);
        \end{tikzpicture} \circ e'
\end{align*} 
    implies
\begin{align*}
    (e')^{\otimes l}\circ g \circ (e')^{\otimes k}= (\prod_{\substack{j \in \{1,\ldots,k,1',\ldots,l'\} }}(z_{j}))  (e')^{\otimes l}\circ g'' \circ (e')^{\otimes k}
\end{align*}
where $g''$ is the underlying partition of the $\Z_{2}$-coloured partition $g$. So we can reduce the problem to the case where $g$ lies in $P(k,l)$. 
We first assume that $g$ is a non-even partition in $P(k,l)$. By Proposition \ref{partition can be written by permutation partitions} we can write $g=\phi(\sigma) \circ g' \circ \phi(\rho)$, where $g'$ is some non-crossing form of $g$. Then $g'$ is the horizontal concatenation of its blocks and contains an odd block $B$ of size $(a,b)$. For this block we have
\begin{align*}
   (e')^{\otimes b} \circ  B \circ (e')^{\otimes a}&=(\frac{1}{2})^{a+b} \sum_{\substack{z_{1},\ldots,z_{a}\in \Z_{2}\\z_{1'}\ldots z_{b'}\in \Z_{2}}}   (\prod_{\substack{j \in \{1,\ldots,a,1',\ldots,b'\} }}(z_{j}))   \begin{tikzpicture}[centerzero]
        \pd{-0.4,-0.2};
        \pd{0.4,-0.2};
        \pd{0.8,-0.2};
        \pd{-0.4,0.2} node[anchor=south] {\dotlabel{z_{1'}}};
        \pd{0.4,0.2} node[anchor=south] {\dotlabel{z_{n-1'}}};
        \pd{0.8,0.2} node[anchor=south] {\dotlabel{z_{b'}}};
        \draw (-0.4,-0.2) -- (-0.4,0.2);
        \draw (0.4,-0.2) -- (0.4,0.2);
        \draw (0.8,-0.2) -- (0.8,0.2);
        \node at (0.04,0) {$\cdots$};
    \end{tikzpicture}
    \circ
    B
    \circ
    \begin{tikzpicture}[centerzero]
        \pd{-0.4,-0.2} node[anchor=north] {\dotlabel{z_1}};
        \pd{0.4,-0.2} node[anchor=north] {\dotlabel{z_{a-1}}};
        \pd{0.8,-0.2} node[anchor=north] {\dotlabel{z_a}};
        \pd{-0.4,0.2};
        \pd{0.4,0.2};
        \pd{0.8,0.2};
        \draw (-0.4,-0.2) -- (-0.4,0.2);
        \draw (0.4,-0.2) -- (0.4,0.2);
        \draw (0.8,-0.2) -- (0.8,0.2);
        \node at (0.04,0) {$\cdots$};
    \end{tikzpicture}\\&=0.
\end{align*}
The last equality follows from the fact that 
\begin{align*}
    \begin{tikzpicture}[centerzero]
        \pd{-0.4,-0.2};
        \pd{0.4,-0.2};
        \pd{0.8,-0.2};
        \pd{-0.4,0.2} node[anchor=south] {\dotlabel{z_{1'}}};
        \pd{0.4,0.2} node[anchor=south] {\dotlabel{z_{b-1'}}};
        \pd{0.8,0.2} node[anchor=south] {\dotlabel{z_{b'}}};
        \draw (-0.4,-0.2) -- (-0.4,0.2);
        \draw (0.4,-0.2) -- (0.4,0.2);
        \draw (0.8,-0.2) -- (0.8,0.2);
        \node at (0.04,0) {$\cdots$};
    \end{tikzpicture}
    \circ
    B
    \circ
    \begin{tikzpicture}[centerzero]
        \pd{-0.4,-0.2} node[anchor=north] {\dotlabel{z_1}};
        \pd{0.4,-0.2} node[anchor=north] {\dotlabel{z_{a-1}}};
        \pd{0.8,-0.2} node[anchor=north] {\dotlabel{z_a}};
        \pd{-0.4,0.2};
        \pd{0.4,0.2};
        \pd{0.8,0.2};
        \draw (-0.4,-0.2) -- (-0.4,0.2);
        \draw (0.4,-0.2) -- (0.4,0.2);
        \draw (0.8,-0.2) -- (0.8,0.2);
        \node at (0.04,0) {$\cdots$};
    \end{tikzpicture}&= \begin{tikzpicture}[centerzero]
        \pd{-0.4,-0.2};
        \pd{0.4,-0.2};
        \pd{0.8,-0.2};
        \pd{-0.4,0.2} node[anchor=south] {\dotlabel{-z_{1'}}};
        \pd{0.4,0.2} node[anchor=south] {\dotlabel{-z_{b-1'}}};
        \pd{0.8,0.2} node[anchor=south] {\dotlabel{-z_{b'}}};
        \draw (-0.4,-0.2) -- (-0.4,0.2);
        \draw (0.4,-0.2) -- (0.4,0.2);
        \draw (0.8,-0.2) -- (0.8,0.2);
        \node at (0.04,0) {$\cdots$};
    \end{tikzpicture}
    \circ
    B
    \circ
    \begin{tikzpicture}[centerzero]
        \pd{-0.4,-0.2} node[anchor=north] {\dotlabel{-z_1}};
        \pd{0.4,-0.2} node[anchor=north] {\dotlabel{-z_{a-1}}};
        \pd{0.8,-0.2} node[anchor=north] {\dotlabel{-z_a}};
        \pd{-0.4,0.2};
        \pd{0.4,0.2};
        \pd{0.8,0.2};
        \draw (-0.4,-0.2) -- (-0.4,0.2);
        \draw (0.4,-0.2) -- (0.4,0.2);
        \draw (0.8,-0.2) -- (0.8,0.2);
        \node at (0.04,0) {$\cdots$};
    \end{tikzpicture} \text{ and }\\
    \prod_{\substack{j \in \{1,\ldots,a,1',\ldots,b'\} }}(z_{j}) &= - \prod_{\substack{j \in \{1,\ldots,a,1',\ldots,b'\} }}(-z_{j})
\end{align*}
because there is an odd amount of vertices in the block $B$. This implies $(e')^{\otimes l} \circ g' \circ (e')^{\otimes k}=0 $ and 
\begin{align*}
    f&=(e')^{\otimes l}\circ g \circ (e')^{\otimes k}\\&=(e')^{\otimes l}\circ \phi(\sigma) \circ g' \circ \phi(\rho) \circ (e')^{\otimes k}\\
    &= \phi(\sigma) \circ (e')^{\otimes l} \circ g' \circ (e')^{\otimes k} \circ \phi(\rho)\\
    &=0,
\end{align*}
which lies in the image trivially. The third equality follows from the proven fact that $(e')^{2}$ commutes with $\crossing$.  Now we can assume that $g$ is an even partition in $P(k,l)$, but then $g\in \text{Hom}_{\underline{\text{Rep}}_{0}(H_{t})}([k], [l])$, so we have that 
\begin{align*}
    \Omega_{0}(g)= (e')^{\otimes l}\circ g \circ (e')^{\otimes k} =f.
\end{align*}
This concludes the proof of the fullness of $\Omega_{0}$.\\
Now we want to show that $\Omega_{0}$ is faithful. 
Let $f\in \text{Hom}_{\underline{\text{Rep}}_{0}(H_{t})}([k], [l])$, we consider $\Omega_{0}(f)= (e')^{\otimes l}\circ f \circ (e')^{\otimes k}.$ First assume that $f$ contains only one even block $B=f$. Then similarly as before we see that \begin{align*}
   (e')^{\otimes l} \circ  B \circ (e')^{\otimes k}&=(\frac{1}{2})^{k+l} \sum_{\substack{z_{1},\ldots,z_{k}\in \Z_{2}\\z_{1'}\ldots z_{l'}\in \Z_{2}}}   (\prod_{\substack{j \in \{1,\ldots,k,1',\ldots,l'\} }}(z_{j}))   \begin{tikzpicture}[centerzero]
        \pd{-0.4,-0.2};
        \pd{0.4,-0.2};
        \pd{0.8,-0.2};
        \pd{-0.4,0.2} node[anchor=south] {\dotlabel{z_{1'}}};
        \pd{0.4,0.2} node[anchor=south] {\dotlabel{z_{l-1'}}};
        \pd{0.8,0.2} node[anchor=south] {\dotlabel{z_{l'}}};
        \draw (-0.4,-0.2) -- (-0.4,0.2);
        \draw (0.4,-0.2) -- (0.4,0.2);
        \draw (0.8,-0.2) -- (0.8,0.2);
        \node at (0.04,0) {$\cdots$};
    \end{tikzpicture}
    \circ
    B
    \circ
    \begin{tikzpicture}[centerzero]
        \pd{-0.4,-0.2} node[anchor=north] {\dotlabel{z_1}};
        \pd{0.4,-0.2} node[anchor=north] {\dotlabel{z_{k-1}}};
        \pd{0.8,-0.2} node[anchor=north] {\dotlabel{z_k}};
        \pd{-0.4,0.2};
        \pd{0.4,0.2};
        \pd{0.8,0.2};
        \draw (-0.4,-0.2) -- (-0.4,0.2);
        \draw (0.4,-0.2) -- (0.4,0.2);
        \draw (0.8,-0.2) -- (0.8,0.2);
        \node at (0.04,0) {$\cdots$};
    \end{tikzpicture}\\
    &=2(\frac{1}{2})^{k+l} \sum_{\substack{z_{1}=1\\z_{2},\ldots,z_{k}\in \Z_{2}\\z_{1'}\ldots z_{l'}\in \Z_{2}}}   (\prod_{\substack{j \in \{1,\ldots,k,1',\ldots,l'\} }}(z_{j})) \begin{tikzpicture}[centerzero]
        \pd{-0.4,-0.2};
        \pd{0.4,-0.2};
        \pd{0.8,-0.2};
        \pd{-0.4,0.2} node[anchor=south] {\dotlabel{z_{1'}}};
        \pd{0.4,0.2} node[anchor=south] {\dotlabel{z_{l-1'}}};
        \pd{0.8,0.2} node[anchor=south] {\dotlabel{z_{l'}}};
        \draw (-0.4,-0.2) -- (-0.4,0.2);
        \draw (0.4,-0.2) -- (0.4,0.2);
        \draw (0.8,-0.2) -- (0.8,0.2);
        \node at (0.04,0) {$\cdots$};
    \end{tikzpicture}
    \circ
    B
    \circ
    \begin{tikzpicture}[centerzero]
        \pd{-0.4,-0.2} node[anchor=north] {\dotlabel{z_1}};
        \pd{0.4,-0.2} node[anchor=north] {\dotlabel{z_{k-1}}};
        \pd{0.8,-0.2} node[anchor=north] {\dotlabel{z_k}};
        \pd{-0.4,0.2};
        \pd{0.4,0.2};
        \pd{0.8,0.2};
        \draw (-0.4,-0.2) -- (-0.4,0.2);
        \draw (0.4,-0.2) -- (0.4,0.2);
        \draw (0.8,-0.2) -- (0.8,0.2);
        \node at (0.04,0) {$\cdots$};
    \end{tikzpicture} \\&\neq 0.
\end{align*}
The inequality follows from the fact that the summands are pairwise different basis elements of $\C P_{\Z_{2}}(k,l)$. For the second equality we identify equivalent $\Z_{2}$-coloured partitions, so it follows from the fact that 
\begin{align*}
    \begin{tikzpicture}[centerzero]
        \pd{-0.4,-0.2};
        \pd{0.4,-0.2};
        \pd{0.8,-0.2};
        \pd{-0.4,0.2} node[anchor=south] {\dotlabel{z_{1'}}};
        \pd{0.4,0.2} node[anchor=south] {\dotlabel{z_{l-1'}}};
        \pd{0.8,0.2} node[anchor=south] {\dotlabel{z_{l'}}};
        \draw (-0.4,-0.2) -- (-0.4,0.2);
        \draw (0.4,-0.2) -- (0.4,0.2);
        \draw (0.8,-0.2) -- (0.8,0.2);
        \node at (0.04,0) {$\cdots$};
    \end{tikzpicture}
    \circ
    B
    \circ
    \begin{tikzpicture}[centerzero]
        \pd{-0.4,-0.2} node[anchor=north] {\dotlabel{z_1}};
        \pd{0.4,-0.2} node[anchor=north] {\dotlabel{z_{k-1}}};
        \pd{0.8,-0.2} node[anchor=north] {\dotlabel{z_k}};
        \pd{-0.4,0.2};
        \pd{0.4,0.2};
        \pd{0.8,0.2};
        \draw (-0.4,-0.2) -- (-0.4,0.2);
        \draw (0.4,-0.2) -- (0.4,0.2);
        \draw (0.8,-0.2) -- (0.8,0.2);
        \node at (0.04,0) {$\cdots$};
    \end{tikzpicture}&= \begin{tikzpicture}[centerzero]
        \pd{-0.4,-0.2};
        \pd{0.4,-0.2};
        \pd{0.8,-0.2};
        \pd{-0.4,0.2} node[anchor=south] {\dotlabel{-z_{1'}}};
        \pd{0.4,0.2} node[anchor=south] {\dotlabel{-z_{l-1'}}};
        \pd{0.8,0.2} node[anchor=south] {\dotlabel{-z_{l'}}};
        \draw (-0.4,-0.2) -- (-0.4,0.2);
        \draw (0.4,-0.2) -- (0.4,0.2);
        \draw (0.8,-0.2) -- (0.8,0.2);
        \node at (0.04,0) {$\cdots$};
    \end{tikzpicture}
    \circ
    B
    \circ
    \begin{tikzpicture}[centerzero]
        \pd{-0.4,-0.2} node[anchor=north] {\dotlabel{-z_1}};
        \pd{0.4,-0.2} node[anchor=north] {\dotlabel{-z_{k-1}}};
        \pd{0.8,-0.2} node[anchor=north] {\dotlabel{-z_k}};
        \pd{-0.4,0.2};
        \pd{0.4,0.2};
        \pd{0.8,0.2};
        \draw (-0.4,-0.2) -- (-0.4,0.2);
        \draw (0.4,-0.2) -- (0.4,0.2);
        \draw (0.8,-0.2) -- (0.8,0.2);
        \node at (0.04,0) {$\cdots$};
    \end{tikzpicture} \text{ and }\\
    \prod_{\substack{j \in \{1,\ldots,a,1',\ldots,b'\} }}(z_{j}) &= \prod_{\substack{j \in \{1,\ldots,a,1',\ldots,b'\} }}(-z_{j})
\end{align*}
because we have an even amount of vertices in the block. Now we assume that $f$ is any even partition. Again by Proposition \ref{partition can be written by permutation partitions} we can write $f=\phi(\sigma) \circ f' \circ \phi(\rho)$ where $f'$ is a non-crossing form of $f$. Then 
\begin{align*}
    \Omega_{0}(f)&=(e')^{\otimes l}\circ f \circ (e')^{\otimes k}\\&=(e')^{\otimes l}\circ \phi(\sigma) \circ f' \circ \phi(\rho) \circ (e')^{\otimes k}\\
    &= \phi(\sigma) \circ (e')^{\otimes l} \circ f' \circ (e')^{\otimes k} \circ \phi(\rho)\\
    &\neq 0,
\end{align*}
because $f'$ is a horizontal concatenation of non-zero even blocks. This shows that $\Omega_{0}(f)\neq 0$ for all $f\in P_{even}$, in other words $\Omega_{0}$ is non-zero on the generators of $\text{Hom}_{\underline{\text{Rep}}_{0}(H_{t})}([k], [l])$. Now assume that the set of even partitions $\{f_{1},\ldots,f_{m}\}\subset P_{even}(k,l)$ consists of pairwise non-equivalent even partitions. Then by definition they form a linearly independent set in $\text{Hom}_{\underline{\text{Rep}}_{0}(H_{t})}([k], [l])$. It follows from the definition of linear independency of the morphism spaces in $\text{Par}(\Z_{2},2t)$ that the set 

\begin{align*}
    \{\begin{tikzpicture}[centerzero]
        \pd{-0.4,-0.2};
        \pd{0.4,-0.2};
        \pd{0.8,-0.2};
        \pd{-0.4,0.2} node[anchor=south] {\dotlabel{z_{1'}}};
        \pd{0.4,0.2} node[anchor=south] {\dotlabel{z_{l-1'}}};
        \pd{0.8,0.2} node[anchor=south] {\dotlabel{z_{l'}}};
        \draw (-0.4,-0.2) -- (-0.4,0.2);
        \draw (0.4,-0.2) -- (0.4,0.2);
        \draw (0.8,-0.2) -- (0.8,0.2);
        \node at (0.04,0) {$\cdots$};
    \end{tikzpicture}
    \circ
    f_{j}
    \circ
    \begin{tikzpicture}[centerzero]
        \pd{-0.4,-0.2} node[anchor=north] {\dotlabel{z_1}};
        \pd{0.4,-0.2} node[anchor=north] {\dotlabel{z_{k-1}}};
        \pd{0.8,-0.2} node[anchor=north] {\dotlabel{z_k}};
        \pd{-0.4,0.2};
        \pd{0.4,0.2};
        \pd{0.8,0.2};
        \draw (-0.4,-0.2) -- (-0.4,0.2);
        \draw (0.4,-0.2) -- (0.4,0.2);
        \draw (0.8,-0.2) -- (0.8,0.2);
        \node at (0.04,0) {$\cdots$};
    \end{tikzpicture}|\text{ } j \in \{1,\ldots,m\}, z_{1}=1,z_{2},\ldots,z_{k}\in \Z_{2},z_{1'}\ldots z_{l'}\in \Z_{2}\}
\end{align*}
is also linearly independent. This implies in its turn that the set $\{\Omega_{0}(f_{1}),\ldots,\Omega (f_{m})\}$ is linearly independent. This concludes the proof of the faithfulness of $\Omega_{0}$.
\end{proof}

\begin{thm} \label{Properties of the functor Omega} 
  The functor  $\Omega: \underline{\text{Rep}}(H_{t}) \to \text{Par}(\Z_{2},2t)^{Kar}$ is fully faithful and essentially surjective, therefore it is an equivalence.
\end{thm}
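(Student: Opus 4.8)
The proof separates into full faithfulness and essential surjectivity, after which the equivalence is formal.

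\textbf{Full faithfulness.} This is essentially immediate from what is already available. By Theorem~\ref{Properties of the functor Omega0} the functor $\Omega_{0}\colon\underline{\text{Rep}}_{0}(H_{t})\to\text{Par}(\Z_{2},2t)^{Kar}$ is a full embedding, hence fully faithful, and $\text{Par}(\Z_{2},2t)^{Kar}$ is a Karoubi category. Since $\Omega$ is, by construction (see the corollary following Theorem~\ref{Omega0 exists!}), the functor obtained from the universal property of the Karoubian envelope with $\Omega\circ\iota_{G}=\Omega_{0}$, Proposition~\ref{Karoubian envelope full, faithful} applies and shows that $\Omega$ is fully faithful.

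\textbf{Reduction for essential surjectivity.} First I would record two standard closure properties of the essential image of a fully faithful functor whose source is a Karoubi (in particular additive and idempotent complete) category: it is closed under finite biproducts, and it is closed under retracts. The second point is the only one needing a word: if $X$ is a retract of $\Omega(A)$, the associated idempotent of $\operatorname{End}(\Omega(A))$ equals $\Omega(e)$ for a (necessarily idempotent) $e\colon A\to A$ by full faithfulness; since $\underline{\text{Rep}}(H_{t})=(\underline{\text{Rep}}_{0}(H_{t}))^{Kar}$ is Karoubi, $e$ splits through some $B$, and applying $\Omega$ to the splitting identifies $\Omega(B)$ with $X$. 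Now every object of $\text{Par}(\Z_{2},2t)^{Kar}$ is a retract of a finite biproduct of the objects $[\tilde{k}]=[\tilde{1}]^{\otimes k}$, and $\Omega$ is a tensor functor, so $\Omega(A)^{\otimes k}\cong\Omega(A^{\otimes k})$; hence it suffices to show that $[\tilde{1}]$ lies in the essential image of $\Omega$.

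\textbf{The object $[\tilde{1}]$.} In $\text{Par}(\Z_{2},2t)^{Kar}$ one has the biproduct decomposition $[\tilde{1}]=([\tilde{1}],e')\oplus([\tilde{1}],1-e')$. The first summand is $\Omega_{0}([1])=\Omega(\iota_{G}([1]))$, so it is in the essential image, and it remains to place $N:=([\tilde{1}],1-e')$ there. The plan is to exhibit $N$ as a retract of $\Omega_{0}([2])=([\tilde{2}],(e')^{\otimes 2})$. Passing through $\tilde{H}$ to $(\text{Par}(\Z_{2})/\!\sim_{2t})^{Kar}$, write $\zeta=\tokstrand[-1]$, so that $e'=\tfrac12(\idstrand-\zeta)$ and, by Remark~\ref{changing signs}, $e'\circ\zeta=-e'=\zeta\circ e'$. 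Consider the morphisms
\begin{align*}
a &:= (e'\otimes e')\circ\spliter\circ(1-e')\colon N\longrightarrow\Omega_{0}([2]), &
b &:= 2\,(1-e')\circ\merge\circ(e'\otimes e')\colon \Omega_{0}([2])\longrightarrow N,
\end{align*}
which indeed lie in the relevant morphism spaces of the idempotent completion. Expanding $e'\otimes e'=\tfrac14(\idstrand\otimes\idstrand-\zeta\otimes\idstrand-\idstrand\otimes\zeta+\zeta\otimes\zeta)$ and using the special commutative Frobenius relations of Definition~\ref{definition generators relations parZ2} together with the involution relations $\merge\circ\spliter=\idstrand$, $\merge\circ(\zeta\otimes\zeta)\circ\spliter=\zeta$ and $\merge\circ(\zeta\otimes\idstrand)\circ\spliter=0=\merge\circ(\idstrand\otimes\zeta)\circ\spliter$, a short computation gives $\merge\circ(e'\otimes e')\circ\spliter=\tfrac14(\idstrand+\zeta)=\tfrac12(1-e')$, whence $b\circ a=2\,(1-e')\circ\tfrac12(1-e')\circ(1-e')=\operatorname{id}_{N}$. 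Thus $N$ is a retract of $\Omega_{0}([2])$ (equivalently, $a\circ b=\Omega_{0}(\FOURLEGS)$ and $N\cong\Omega(([2],\FOURLEGS))$, $\FOURLEGS=\{1,2,1',2'\}$ being an idempotent endomorphism of $[2]$ in $\underline{\text{Rep}}_{0}(H_{t})$). Hence $N$, and therefore $[\tilde{1}]$, is in the essential image, and by the reduction above $\Omega$ is essentially surjective. A fully faithful, essentially surjective functor is an equivalence, proving the theorem.

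The only genuinely non-formal point — and the place to be careful — is the last paragraph: the generating object $[\tilde{1}]$ is \emph{not} itself in the image of $\Omega_{0}$, only one of its two summands is, so one is forced to go to a tensor power and split off $N$ from $\Omega_{0}([2])$. Everything else is bookkeeping with closure properties and with relations already verified in the proof of Theorem~\ref{Omega0 exists!}.
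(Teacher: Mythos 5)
Your proposal is correct and follows essentially the same route as the paper: full faithfulness via Proposition \ref{Karoubian envelope full, faithful} applied to Theorem \ref{Properties of the functor Omega0}, and essential surjectivity by splitting $[\tilde{1}]=([\tilde{1}],e')\oplus([\tilde{1}],1-e')$ and identifying the second summand with $\Omega(([2],\FOURLEGS))$ via an explicit retraction (your $a,b$ are the paper's $\alpha,\beta$ up to normalization, and your computation $\merge\circ(e'\otimes e')\circ\spliter=\tfrac12(1-e')$ checks out). The only cosmetic difference is that you package the passage to general objects $([\tilde k],f)$ as closure of the essential image under biproducts and retracts, where the paper spells out the same idempotent-transport argument by hand.
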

\begin{proof}
We apply Proposition \ref{Karoubian envelope full, faithful} to Theorem \ref{Properties of the functor Omega0} to show that $\Omega$ is fully faithful. We are left to prove the essential surjectivity of $\Omega$. We want to show that $[\tilde{1}]$ lies in the essential image of $\Omega.$ For this we note that $[\tilde{1}] \cong ([\tilde{1}],e') \oplus ([\tilde{1}],id_{[\tilde{1}]}-e')$ and that $\Omega([1])=([\tilde{1}],e')$ by definition. 
We write 
\begin{align*}
    e''=id_{[\tilde{1}]}-e'=\frac{
 \circ e' \otimes e'))}$ respectively. This proves the claim.
        Because $\Omega$ is an additive functor we get
        \begin{align*}
            \Omega([1]\oplus ([2],\Fourlegs))\cong \Omega([1]) \oplus \Omega(([2],\Fourlegs)) \cong  ([\tilde{1}],e') \oplus ([\tilde{1}],id_{[\tilde{1}]}-e')  \cong [\tilde{1}]=([\tilde{1}],id_{[\tilde{1}]}).
        \end{align*}
        Let $([\tilde{k}],f)$ be any element of $\text{Par}(\Z_{2},2t)^{Kar}$, where $k\in \N$ and $f:[\tilde{k}]\to [\tilde{k}]$ is an idempotent. Because $ \Omega(([1]\oplus ([2],\Fourlegs))^{\otimes k})\cong [\tilde{k}]=([\tilde{k}],id_{[\tilde{k}]})$ and $\Omega$ is full, there exists a morphism $f':([1]\oplus ([2],\Fourlegs))^{\otimes k} \to ([1]\oplus ([2],\Fourlegs))^{\otimes k}$ with $\Omega(f')\cong f$. Because $\Omega$ is a faithful functor and 
        \begin{align*}
            \Omega(f' \circ f')= \Omega(f')\circ \Omega(f') \cong f \circ f = f \cong \Omega(f'),
        \end{align*}
        the morphism $f'=f' \circ f'$ is also an idempotent. By definition, see Remark \ref{essential injectivity omega}, we get
        \begin{align*}
            \Omega( ([1]\oplus ([2],\Fourlegs))^{\otimes k},f')\cong ([\tilde{k}],f).
        \end{align*}
        This concludes the proof.
\end{proof}

\begin{rem} \label{essential injectivity omega}
     By the universal property of the Karoubian envelope the functor $\Omega$ is only defined up to isomorphism, so there are multiple choices if we want to define a particular functor. But in this particular case, $\Omega$ being a functor from one Karoubian envelope into another, we can make a canonical choice for the images of the objects in $\underline{\text{Rep}}(H_{t})$, namely 
\begin{align*}
    \Omega(([k],f))=([\tilde{k}],\Omega_{0}(f))=([\tilde{k}],(e')^{\otimes k} \circ f \circ (e')^{\otimes k}).
\end{align*}
We fix this choice from now on.
\end{rem}

\subsubsection{An application: indecomposable objects}

Knop showed in \cite[Theorem 6.1]{Knop_2007} that in the semisimple case $t\neq 2 \N$, the irreducible objects in $\text{Par}(\Z_{2},t)^{Kar}$ are classified by the set of all bipartitions, see also \cite[Chapter 9]{Nyobe_Likeng_2021}. The equivalence of categories allows us to extend this result to the classification of the indecomposable objects in $\text{Par}(\Z_{2},t)^{Kar}$ for the non-semisimple cases $t \in 2\N \backslash \{0\}$ using \cite[Proposition 5.12]{Flake_2021}. 

\begin{prop} \label{indecomposables in repht}
    Let $t\in \C \backslash \{0\}$. Then there is a bijection between the set of bipartitions $\lambda=(\lambda_{1},\lambda_{2})$ of arbitrary size and the set of isomorphism classes of non-zero indecomposable objects in $\text{Par}(\Z_{2},t)^{Kar}$. 
\end{prop}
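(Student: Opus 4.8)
The plan is to transfer Knop's classification across the equivalence $\Omega$ and then handle the two missing pieces: the parameter $t\in 2\N\backslash\{0\}$ and the passage from ``irreducible in the semisimple case'' to ``indecomposable in general''. Concretely, for $t\in\C\backslash 2\N$ the category $\text{Par}(\Z_2,t)^{Kar}$ is semisimple, so indecomposable $=$ irreducible, and Knop's theorem \cite[Theorem 6.1]{Knop_2007} already gives the bijection with bipartitions; this case needs only a citation. So the real content is the case $t=2n$ with $n\in\N_{>0}$.

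For $t=2n$ I would argue as follows. By Theorem~\ref{Properties of the functor Omega} we have an equivalence of symmetric monoidal categories $\Omega:\underline{\text{Rep}}(H_n)\simeq \text{Par}(\Z_2,2n)^{Kar}$, and an equivalence induces a bijection on isomorphism classes of indecomposable objects. Hence it suffices to classify the indecomposable objects of $\underline{\text{Rep}}(H_n)=(\underline{\text{Rep}}_0(H_n))^{Kar}$, i.e.\ of the Flake--Maassen reflection category. This is exactly what \cite[Proposition 5.12]{Flake_2021} provides: Flake and Maassen show that the indecomposable objects of $\underline{\text{Rep}}(H_t)$ (which for them is built from a categorical analogue of the reflection representation, tensor generated appropriately) are parametrised by bipartitions of arbitrary size, for all $t\in\C\backslash\{0\}$, including the non-semisimple values $t\in\N$. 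Pulling this bijection through $\Omega$ yields the claimed bijection for $\text{Par}(\Z_2,2n)^{Kar}$, and combined with the semisimple case this covers all of $\C\backslash\{0\}$.

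In slightly more detail, the steps are: (i) reduce to $t=2n$, $n\ge 1$, citing semisimplicity of $\text{Par}(\Z_2,t)^{Kar}$ for $t\notin 2\N$ and \cite{Knop_2007} otherwise; (ii) invoke Theorem~\ref{Properties of the functor Omega} to get the equivalence $\Omega$, and observe that any equivalence of additive Karoubi categories restricts to a bijection between isomorphism classes of nonzero indecomposable objects (an object is indecomposable iff its endomorphism ring has no nontrivial idempotents, a property preserved by equivalences); (iii) apply \cite[Proposition 5.12]{Flake_2021} to identify the indecomposables of $\underline{\text{Rep}}(H_t)$ with bipartitions for $t\in\C\backslash\{0\}$; (iv) compose the two bijections. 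One should take a moment to check that the parametrisation in \cite{Flake_2021} is genuinely by all bipartitions and does not exclude finitely many (e.g.\ those of size $\le$ something depending on $n$) in the non-semisimple range; if it does, one argues directly that the corresponding indecomposable summands of $([\tilde 1]\oplus([\tilde 2],\Fourlegs))^{\otimes k}$ in $\text{Par}(\Z_2,2n)^{Kar}$ remain nonzero and pairwise non-isomorphic, using that their categorical dimensions or their images under the semisimplification functor $H$ are nonzero.

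The main obstacle is not conceptual but bibliographic and compatibility-theoretic: making sure the bipartition labelling used by Flake--Maassen for $\underline{\text{Rep}}(H_t)$ in the non-semisimple case matches the one used by Knop for $\text{Par}(\Z_2,t)^{Kar}$ in the semisimple case, so that the statement reads uniformly as ``bijection with all bipartitions'' across the whole punctured plane. If the labellings differ only by a reindexing this is harmless; if \cite[Proposition 5.12]{Flake_2021} is stated only for $\underline{\text{Rep}}_0(H_t)$ or only semisimplically, one has to supply the Karoubi-envelope bookkeeping (idempotent splittings, the fact that base change of indecomposables along an equivalence is again indecomposable) to fill the gap, which is routine given Proposition~\ref{Karoubian envelope full, faithful} and the structure results already assembled above.
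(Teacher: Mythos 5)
Your proposal is correct and follows exactly the route the paper intends: the semisimple case $t\notin 2\N$ is Knop's classification of irreducibles by bipartitions, and the non-semisimple case $t=2n$ is obtained by transporting the Flake--Maassen classification of indecomposables in $\underline{\text{Rep}}(H_{t})$ (\cite[Proposition 5.12]{Flake_2021}) across the equivalence $\Omega$ of Theorem \ref{Properties of the functor Omega}. Since the statement only asserts the existence of \emph{a} bijection, your worry about matching the two labelling conventions is not actually needed, and the rest is the routine observation that an equivalence of Karoubi categories induces a bijection on isomorphism classes of nonzero indecomposables.
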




\subsection{Semisimplification} \label{sec:ss}

Obviously $\Omega$ induces a symmetric monoidal equivalence \begin{align*} \widehat{\Omega}:\widehat{\underline{\text{Rep}}(H_{n})}\to \widehat{\text{Par}(\Z_{2},2n)^{Kar}}. \end{align*} between the semisimplifications. This equivalence is the identity if we work with our fixed choices of $H$ and $\Omega$.

\begin{cor} \label{commutative square}
     The equivalence $\Omega:\underline{\text{Rep}}(H_{n})\to \text{Par}(\Z_{2},2n)^{Kar}$ makes the following square
     \begin{equation}
\begin{tikzcd}
\underline{\text{Rep}}(H_{n}) \arrow{r}{G} \arrow[d,"\Omega"] & \text{Rep}(H_{n})\arrow{d}{=} \ \\ \text{Par}(\mathbb{Z}_{2},2n)^{Kar} \arrow{r}{H} & \text{Rep}(H_{n})
\end{tikzcd}
\end{equation}
commute for all $n\in \N.$
\end{cor}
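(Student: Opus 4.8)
The plan is to reduce the statement to a comparison of the two semisimplification functors on the single generating object of $\underline{\text{Rep}}_0(H_n)$, using the universal property of Theorem \ref{real Universal property fo RepH0, not Karoubian universal prop} together with the fact that a tensor functor out of a Karoubian envelope is determined (up to isomorphism) by its restriction along $\iota_G$. First I would observe that both $G\circ\iota_G$ and $H\circ\Omega\circ\iota_G = H\circ\iota_H\circ\Omega_0$ are strict $\C$-linear symmetric monoidal functors $\underline{\text{Rep}}_0(H_n)\to\text{Rep}(H_n)$, so by Theorem \ref{Functor G tilde} it suffices to compare the corresponding functors out of $\text{Par}_t$ (precomposing with $\tilde G$), and hence, by the uniqueness part of Theorem \ref{real Universal property fo RepH0, not Karoubian universal prop}, to check that the images of the four generating morphisms $\begin{tikzpicture}[anchorbase]\draw (-0.2,-0.4) -- (0,-0.2) -- (0.2,-0.4);\draw (-0.2,0.4) -- (0,0.2) -- (0.2,0.4);\draw (0,-0.2) -- (0,0.2);\end{tikzpicture}$, $\begin{tikzpicture}[anchorbase]\draw (-0.2,-0.4) -- (0,-0.2) -- (0.2,-0.4);\draw (0,-0.2) -- (0,0.2);\opendot{0,0.2};\end{tikzpicture}$, $\CROSS$, $\begin{tikzpicture}[anchorbase]\draw (-0.2,0.4) -- (0,0.2) -- (0.2,0.4);\draw (0,-0.2) -- (0,0.2);\opendot{0,-0.2};\end{tikzpicture}$ agree. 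Concretely: on the $G$-side these map to the explicit maps listed in the "Alternative definition of $G'$" subsection (the functor $G''$), and on the $H\circ\Omega$-side they map, via Theorem \ref{Omega0 exists!} and the formulas (5.5)–(5.9), to $(e'\otimes e')\circ(-)\circ(e'\otimes e')$ (with the appropriate power of $2$) of the corresponding morphisms built from $\merge,\spliter,\toppin,\bottompin$, pushed through $H''$ using the explicit maps in the "Alternative definition of $H'$" subsection.

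The second step is the one genuine verification: compute, on the image $\tilde u\cong u\subset V$ of the idempotent $H(e')=e$ (which identifies $\tilde u$ with $u$ via the splitting $\alpha,\beta$ fixed in the lemmas), that these two descriptions coincide on basis elements. For instance, for $\begin{tikzpicture}[anchorbase]\draw (-0.2,-0.4) -- (0,-0.2) -- (0.2,-0.4);\draw (-0.2,0.4) -- (0,0.2) -- (0.2,0.4);\draw (0,-0.2) -- (0,0.2);\end{tikzpicture}$ one checks that $\beta^{\otimes 2}\circ\big(2\,(e\otimes e)\circ H''(\spliter\circ\merge)\circ(e\otimes e)\big)\circ\alpha^{\otimes 2}$ sends $e_i\otimes e_j$ to $\delta_{i,j}\,e_i\otimes e_i$, matching $G''$; similarly $\begin{tikzpicture}[anchorbase]\draw (-0.2,-0.4) -- (0,-0.2) -- (0.2,-0.4);\draw (0,-0.2) -- (0,0.2);\opendot{0,0.2};\end{tikzpicture}\mapsto(e_i\otimes e_j\mapsto\delta_{i,j})$, the crossing goes to the swap, and $\begin{tikzpicture}[anchorbase]\draw (-0.2,0.4) -- (0,0.2) -- (0.2,0.4);\draw (0,-0.2) -- (0,0.2);\opendot{0,-0.2};\end{tikzpicture}\mapsto(1\mapsto\sum_i e_i\otimes e_i)$. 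The normalization factors $2^{((\sum m_i)-2s)/2}$ from Theorem \ref{Omega0 exists!} are exactly what make the $\tfrac14$'s and $\tfrac12$'s coming from $\beta=\tfrac12(e_1^i\mapsto e_1^i-e_{-1}^i,\dots)$ cancel; I would record this cancellation once and for all and then each of the four checks is a two-line computation.

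Having matched the generators, the uniqueness clause of Theorem \ref{real Universal property fo RepH0, not Karoubian universal prop} gives a (strict) equality, or at worst a canonical natural isomorphism, $G\circ\iota_G\;\cong\;H\circ\iota_H\circ\Omega_0$, i.e. $G'\cong H\circ\Omega\circ\iota_G$. Finally I would invoke the universal property of the Karoubian envelope (Definition \ref{THE definition of Karoubian envelope and embedding} and Proposition \ref{Karoubian envelope full, faithful}), which says that a $\C$-linear additive functor out of $\mathcal C^{Kar}$ is determined up to isomorphism by its restriction along $\iota_{Kar}$, to conclude that $G\cong H\circ\Omega$ on all of $\underline{\text{Rep}}(H_n)$; with the fixed choices of $H$ and $\Omega$ from Definition \ref{definition H} and Remark \ref{essential injectivity omega} this is an actual commuting square. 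The main obstacle is purely computational bookkeeping: keeping the idempotent splittings $\alpha,\beta$, the identification $\tilde u\cong u$, and the powers of $2$ consistent across the two sides; there is no conceptual difficulty once the normalization is pinned down, and the symmetric-monoidal/strictness bookkeeping is routine given the universal properties already established.
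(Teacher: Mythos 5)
Your proposal is correct and follows essentially the same route as the paper: reduce via the universal property of the Karoubian envelope and the presentations $\tilde G,\tilde H$ to checking the four generating morphisms of $\text{Par}_n$, compute both sides explicitly on basis vectors of $\tilde u\cong u$ using the alternative descriptions $G''$ and $H''$, and verify that the normalization powers of $2$ cancel against the $\tfrac12$'s in the splitting of $e'$. The paper likewise fixes the choice of $H$ with $H(([\tilde k],(e')^{\otimes k}))=\tilde u^{\otimes k}$ and carries out exactly these generator-by-generator computations.
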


\begin{proof} 
By the universal property of the Karoubian envelope, the corollary will follow immediately if we can show that 
\begin{align*}
    H \circ \Omega_{0}=G\circ \iota_{G} = G':\underline{\text{Rep}}_{0}(H_{n}) \to \text{Rep}(H_{n})
\end{align*}
for some choice of $H$. We will work with the version of $H$ which sends $([\tilde{k}],(e')^{\otimes k})$ to $u^{\otimes k}$ for all $k\in \N$. This is possible because $u^{\otimes k} \cong im(e^{\otimes k})$, where $e:V\to V$ was defined in the beginning of Section 3.2.

We want to show that the diagram

\begin{equation*}
\begin{tikzcd}
\underline{\text{Rep}}_{0}(H_{n}) \arrow{r}{\iota_{G}} \arrow{dr}{\Omega_{0}} &\underline{\text{Rep}}(H_{n}) \arrow{r}{G}  & \text{Rep}(H_{n})\arrow{d}{=} \ \\  & (Par(\mathbb{Z}_{2},2n))^{Kar} \arrow{r}{H} & \text{Rep}(H_{n})
\end{tikzcd}
\end{equation*}
commutes strictly for our choice of $H$. For an object $[k] \in \underline{\text{Rep}}_{0}(H_{n})$ it is clear that 
\begin{align*}
    H \circ \Omega_{0}([k]) =H(([\tilde{k}],id_{[\tilde{k}]})= u^{\otimes k}=G([k])=G \circ \iota_{G}([k])
\end{align*} 
for this choice of $H$. Now we want to prove that the diagram commutes for morphisms. For this we use the alternative descriptions of the functors $G'$ and $H'$ from subsections 5.1.2 and 5.1.1 respectively.  The equivalences $\tilde{G}$ and $\tilde{H}$  reduce the problem to showing that the corresponding diagram 
\begin{equation*}
\begin{tikzcd}
Par_{n} \arrow{r}{ (\tilde{G}^{Kar})^{-1} \circ \iota_{G} \circ \tilde{G}} \arrow{dr}{(\tilde{H}^{Kar})^{-1} \circ \Omega_{0}\circ \tilde{G}} &(Par_{n})^{Kar}  \arrow{r}{G\circ \tilde{G}^{Kar}}  & \text{Rep}(H_{n})\arrow{d}{=} \ \\  & (Par(\mathbb{Z}_{2})/\sim_{2n}))^{Kar} \arrow{r}{H\circ \tilde{H}^{Kar}} & \text{Rep}(H_{n})
\end{tikzcd}
\end{equation*}
commutes for morphisms.
So we have to prove that the images of the generating morphisms in Definition \ref{generator relations repH0 definition} under the functors 
\begin{align*}
    H\circ \tilde{H}^{Kar} \circ (\tilde{H}^{Kar})^{-1} \circ \Omega_{0}\circ \tilde{G} = H \circ \Omega_{0} \circ \tilde{G} 
\end{align*}
and
\begin{align*}
   G\circ \tilde{G}^{Kar}\circ (\tilde{G}^{Kar})^{-1} \circ \iota_{G} \circ \tilde{G}=G \circ \iota_{G} \circ \tilde{G}=G' \circ \tilde{G}= G''
\end{align*}
are equal.
It follows directly from the functoriality of all involved functors that the diagram commutes for the identity $\idstrand$ . Now we want to show that the diagram commutes for $\begin{tikzpicture}[anchorbase]
            \draw (-0.2,-0.4) -- (0,-0.2) -- (0.2,-0.4);
            \draw (-0.2,0.4) -- (0,0.2) -- (0.2,0.4);
            \draw (0,-0.2) -- (0,0.2);
        \end{tikzpicture}$. We first see that 
\begin{align*}
    G''  ( \begin{tikzpicture}[anchorbase]
            \draw (-0.2,-0.4) -- (0,-0.2) -- (0.2,-0.4);
            \draw (-0.2,0.4) -- (0,0.2) -- (0.2,0.4);
            \draw (0,-0.2) -- (0,0.2);
        \end{tikzpicture})(e_{i}\otimes e_{j})=\delta_{i,j}e_{i}\otimes e_{i}
\end{align*} for $i,j\in \{1,\ldots, n\}$ and $e_{i}$ a canonical basiselement of $u$. To use the explicit description of $H'$ from subsection 5.1.1, we must first consider the case where $H$ sends $([\tilde{k}],(e')^{\otimes k})$ to $(\tilde{u})^{\otimes k}$, a tensor powers of $\tilde{u}$, the subrepresentation of $V$ isomorphic to $u$, see Remark \ref{u subrepresentatino of V}. After we have done this, we can use the isomorphism 
\begin{align*}
     \tilde{u} &\to u\\
     e_{1}^{i}-e_{-1}^{i} &\mapsto e_{i} \text{ for } i\in \{1,\ldots,n\}
\end{align*}
to find out the image of $\begin{tikzpicture}[anchorbase]
            \draw (-0.2,-0.4) -- (0,-0.2) -- (0.2,-0.4);
            \draw (-0.2,0.4) -- (0,0.2) -- (0.2,0.4);
            \draw (0,-0.2) -- (0,0.2);
        \end{tikzpicture}$ under the functor $H\circ \tilde{H}^{Kar} \circ (\tilde{H}^{Kar})^{-1} \circ \Omega_{0}\circ \tilde{G}$ for our original choice of $H$.
So we will assume now that $H(([\tilde{k}],(e')^{\otimes k}))=(\tilde{u})^{\otimes k}$ until further notice. We already saw that the equality 
\begin{align*}
    H(e')((e_{1}^{i}-e_{-1}^{i}))&= \frac{1}{2}(e_{1}^{i}-e_{-1}^{i}-e_{-1}^{i}+e_{1}^{i})\\
    &= \frac{1}{2}2(e_{1}^{i}-e_{-1}^{i})\\
    &= (e_{1}^{i}-e_{-1}^{i})
\end{align*}
 holds, which is obvious because $e_{1}^{i}-e_{-1}^{i}$ lies in the image of $e=H(e')$. Note that $H(e')=H''(e')$ where we use the corresponding definitions of $e'$ in $(\text{Par}(\Z_{2},2n))^{Kar}$ and $\text{Par}(\Z_{2})/\sim_{2n}$ respectively. We use $\iota_{\tilde{H}}:\text{Par}(\Z_{2})/\sim_{2n} \to (\text{Par}(\Z_{2})/\sim_{2n})^{Kar}$ to denote the $\C$-linear full embedding of the category into its Karoubian envelope. Then $H\circ \tilde{H}^{Kar}\circ \iota_{\tilde{H}}=H' \circ \tilde{H}=H''$ implies that
\begin{align*}
    H\circ \tilde{H}^{Kar} \circ (\tilde{H}^{Kar})^{-1} &\circ \Omega_{0}\circ \tilde{G}
    (\begin{tikzpicture}[anchorbase]
            \draw (-0.2,-0.4) -- (0,-0.2) -- (0.2,-0.4);
            \draw (-0.2,0.4) -- (0,0.2) -- (0.2,0.4);
            \draw (0,-0.2) -- (0,0.2);
        \end{tikzpicture})((e_{1}^{i}-e_{-1}^{i})\otimes (e_{1}^{j}-e_{-1}^{j}))\\
    &=H\circ \tilde{H}^{Kar}(2 e'\otimes e' \circ  \begin{tikzpicture}[anchorbase]
            \draw (-0.2,-0.4) -- (0,-0.2) -- (0.2,-0.4);
            \draw (-0.2,0.4) -- (0,0.2) -- (0.2,0.4);
            \draw (0,-0.2) -- (0,0.2);
        \end{tikzpicture} \circ  e'\otimes e')((e_{1}^{i}-e_{-1}^{i})\otimes (e_{1}^{j}-e_{-1}^{j}))\\
    &=H\circ \tilde{H}^{Kar}\circ \iota_{\tilde{H}}(2 e'\otimes e' \circ  \begin{tikzpicture}[anchorbase]
            \draw (-0.2,-0.4) -- (0,-0.2) -- (0.2,-0.4);
            \draw (-0.2,0.4) -- (0,0.2) -- (0.2,0.4);
            \draw (0,-0.2) -- (0,0.2);
        \end{tikzpicture} \circ  e'\otimes e')((e_{1}^{i}-e_{-1}^{i})\otimes (e_{1}^{j}-e_{-1}^{j}))\\
     &=H'\circ \tilde{H}(2 e'\otimes e' \circ  \begin{tikzpicture}[anchorbase]
            \draw (-0.2,-0.4) -- (0,-0.2) -- (0.2,-0.4);
            \draw (-0.2,0.4) -- (0,0.2) -- (0.2,0.4);
            \draw (0,-0.2) -- (0,0.2);
        \end{tikzpicture} \circ  e'\otimes e')((e_{1}^{i}-e_{-1}^{i})\otimes (e_{1}^{j}-e_{-1}^{j}))\\
    &=H''(2 e'\otimes e' \circ  \begin{tikzpicture}[anchorbase]
            \draw (-0.2,-0.4) -- (0,-0.2) -- (0.2,-0.4);
            \draw (-0.2,0.4) -- (0,0.2) -- (0.2,0.4);
            \draw (0,-0.2) -- (0,0.2);
        \end{tikzpicture} \circ  e'\otimes e')((e_{1}^{i}-e_{-1}^{i})\otimes (e_{1}^{j}-e_{-1}^{j}))\\   
        &= 2H''( e'\otimes e' \circ  \begin{tikzpicture}[anchorbase]
            \draw (-0.2,-0.4) -- (0,-0.2) -- (0.2,-0.4);
            \draw (-0.2,0.4) -- (0,0.2) -- (0.2,0.4);
            \draw (0,-0.2) -- (0,0.2);
        \end{tikzpicture} \circ  e'\otimes e')((e_{1}^{i}-e_{-1}^{i})\otimes (e_{1}^{j}-e_{-1}^{j}))\\
        &= 2H''( e'\otimes e' \circ  \begin{tikzpicture}[anchorbase]
            \draw (-0.2,-0.4) -- (0,-0.2) -- (0.2,-0.4);
            \draw (-0.2,0.4) -- (0,0.2) -- (0.2,0.4);
            \draw (0,-0.2) -- (0,0.2);
        \end{tikzpicture})((e_{1}^{i}-e_{-1}^{i})\otimes (e_{1}^{j}-e_{-1}^{j}))\\
        &=2H''( e'\otimes e' \circ  \spliter \circ \merge)((e_{1}^{i}-e_{-1}^{i})\otimes (e_{1}^{j}-e_{-1}^{j}))\\
        &= 2 \delta_{i,j} H''( e'\otimes e' \circ  \spliter)( e_{1}^{i} + e_{-1}^{i})\\
        &=\delta_{i,j}2H''( e'\otimes e')  ( e_{1}^{i}\otimes e_{1}^{i} + e_{-1}^{i} \otimes e_{-1}^{i})\\
        &=\delta_{i,j}2 \frac{1}{4}(2 e_{1}^{i}\otimes e_{1}^{i} -2e_{-1}^{i}\otimes e_{1}^{i} -2e_{1}^{i}\otimes e_{-1}^{i}+2e_{-1}^{i}\otimes e_{-1}^{i})\\
        &=\delta_{i,j} \frac{1}{2}2( e_{1}^{i}\otimes e_{1}^{i} -e_{-1}^{i}\otimes e_{1}^{i} -e_{1}^{i}\otimes e_{-1}^{i}+e_{-1}^{i}\otimes e_{-1}^{i})\\
        &=\delta_{i,j}(e_{1}^{i}-e_{-1}^{i})\otimes (e_{1}^{j}-e_{-1}^{j}),
\end{align*} for $i,j\in \{1,\ldots, n\}$. By the previous remarks this shows that
\begin{align*}
     H\circ \tilde{H}^{Kar} \circ (\tilde{H}^{Kar})^{-1} &\circ \Omega_{0}\circ \tilde{G}
    (\begin{tikzpicture}[anchorbase]
            \draw (-0.2,-0.4) -- (0,-0.2) -- (0.2,-0.4);
            \draw (-0.2,0.4) -- (0,0.2) -- (0.2,0.4);
            \draw (0,-0.2) -- (0,0.2);
        \end{tikzpicture})(e_{i}\otimes e_{j})=\delta_{i,j}(e_{i} \otimes e_{i})=G''(\begin{tikzpicture}[anchorbase]
            \draw (-0.2,-0.4) -- (0,-0.2) -- (0.2,-0.4);
            \draw (-0.2,0.4) -- (0,0.2) -- (0.2,0.4);
            \draw (0,-0.2) -- (0,0.2);
        \end{tikzpicture})(e_{i} \otimes e_{j})
\end{align*}
for our original choice of $H$. This implies the
commutativity for $\begin{tikzpicture}[anchorbase]
            \draw (-0.2,-0.4) -- (0,-0.2) -- (0.2,-0.4);
            \draw (-0.2,0.4) -- (0,0.2) -- (0.2,0.4);
            \draw (0,-0.2) -- (0,0.2);
        \end{tikzpicture}$. The commutativity for $  \begin{tikzpicture}[anchorbase]
            \draw (-0.2,-0.4) -- (0,-0.2) -- (0.2,-0.4);
            \draw (0,-0.2) -- (0,0.2);
            \opendot{0,0.2};
        \end{tikzpicture}$ and $ \begin{tikzpicture}[anchorbase]
            \draw (-0.2,0.4) -- (0,0.2) -- (0.2,0.4);
            \draw (0,-0.2) -- (0,0.2);
            \opendot{0,-0.2};
        \end{tikzpicture}$ is proven similarly. Lastly we want to show the commutativity of the diagram for $\CROSS$.
        We see that 
        \begin{align*}
    G''  ( \CROSS)(e_{i}\otimes e_{j})=e_{j}\otimes e_{i}
\end{align*}
        and for the choice $H(([\tilde{k}],(e')^{\otimes k}))=(\tilde{u})^{\otimes k}$ we see that

\begin{align*}
    H\circ \tilde{H}^{Kar} \circ (\tilde{H}^{Kar})^{-1} &\circ \Omega_{0}\circ \tilde{G}
    (\CROSS)((e_{1}^{i}-e_{-1}^{i})\otimes (e_{1}^{j}-e_{-1}^{j}))\\
    &=H\circ \tilde{H}^{Kar}( e'\otimes e' \circ  \CROSS \circ  e'\otimes e')((e_{1}^{i}-e_{-1}^{i})\otimes (e_{1}^{j}-e_{-1}^{j}))\\
    &=H\circ \tilde{H}^{Kar}\circ \iota_{\tilde{H}}( e'\otimes e' \circ  \CROSS \circ  e'\otimes e')((e_{1}^{i}-e_{-1}^{i})\otimes (e_{1}^{j}-e_{-1}^{j}))\\
     &=H'\circ \tilde{H}( e'\otimes e' \circ  \CROSS \circ  e'\otimes e')((e_{1}^{i}-e_{-1}^{i})\otimes (e_{1}^{j}-e_{-1}^{j}))\\
    &=H''( e'\otimes e' \circ  \CROSS \circ  e'\otimes e')((e_{1}^{i}-e_{-1}^{i})\otimes (e_{1}^{j}-e_{-1}^{j}))\\   
        &= H''( e'\otimes e' \circ  \CROSS)((e_{1}^{i}-e_{-1}^{i})\otimes (e_{1}^{j}-e_{-1}^{j}))\\
        &= H''( e'\otimes e' )((e_{1}^{j}-e_{-1}^{j})\otimes (e_{1}^{i}-e_{-1}^{i}))\\
        &= (e_{1}^{j}-e_{-1}^{j})\otimes (e_{1}^{i}-e_{-1}^{i})\\
\end{align*}
for all $i,j\in \{1,\ldots,n\}$. By the same arguments that we used to prove the commutativity for $\begin{tikzpicture}[anchorbase]
            \draw (-0.2,-0.4) -- (0,-0.2) -- (0.2,-0.4);
            \draw (-0.2,0.4) -- (0,0.2) -- (0.2,0.4);
            \draw (0,-0.2) -- (0,0.2);
        \end{tikzpicture}$, this shows the commutativity for $\CROSS$ and we conclude the proof.
\end{proof}



\printbibliography

\end{document}